\documentclass[11pt, leqno]{article}
\setlength{\skip\footins}{0.6cm}

%-------------------PACKAGES-----------------------%
\usepackage[utf8]{inputenc}
\usepackage{lmodern}
\usepackage{subfiles}
\usepackage{enumitem}
	\setenumerate{label={\normalfont(\alph*)}, itemsep=0em} % or \upshape

\usepackage{amsfonts}
\usepackage{amsthm}
\usepackage{amsmath}
\usepackage{amssymb}
\usepackage{amscd}
\usepackage{mathrsfs}
\usepackage{mathtools}
\usepackage{esint}

\usepackage[margin=3cm]{geometry}
\usepackage{titlefoot}
\usepackage{indentfirst}
\usepackage{graphicx}
\usepackage{graphics}
\usepackage{lscape}
\usepackage{tikz-cd}
\usepackage{color}
\usepackage{pict2e}
\usepackage{epic}
\usepackage{epstopdf}
\usepackage{titlesec}
	\titleformat{\section}[block]{\Large\bfseries\filcenter}{\thesection}{1em}{}
\usepackage{commath}
\usepackage{float}
\usepackage{caption}
\usepackage{etoolbox}
\usepackage[affil-it]{authblk}
\usepackage{combelow}

\let\oldbibliography\thebibliography
\renewcommand{\thebibliography}[1]{%
  \oldbibliography{#1}%
  \setlength{\itemsep}{-.5pt}%
}

\usepackage[hidelinks]{hyperref}
\hypersetup{bookmarksopen=true} 
\usepackage{hypcap}

\graphicspath{{./Pictures/}}
\allowdisplaybreaks

%----------COMPUTER CODE-----------%

\usepackage{listings}
\lstset{
	language=Mathematica,
	basicstyle=\small\sffamily,
%	numbers=left,
	numberstyle=\tiny,
%	frame=tb,
	columns=fullflexible,
	showstringspaces=false
}

\usepackage{fancybox}
\makeatletter
\newenvironment{CenteredBox}{% 
\begin{Sbox}}{% Save the content in a box
\end{Sbox}\centerline{\parbox{\wd\@Sbox}{\TheSbox}}}% And output it centered
\makeatother

%------------THEOREM-TYPE ENVIRONMENTS-------------%

\theoremstyle{plain}
\newtheorem{bigthm}{Theorem}

\renewcommand*\thesection{\arabic{section}}
\swapnumbers
\numberwithin{equation}{section}

%\counterwithin{figure}{equation}

\theoremstyle{plain}
\newtheorem{teo}[equation]{Theorem}
\newtheorem{lema}[equation]{Lemma}
\newtheorem{prop}[equation]{Proposition}
\newtheorem{cor}[equation]{Corollary}

\theoremstyle{definition}
\newtheorem{ndef}[equation]{Definition}
\newtheorem{ex}[equation]{Example}
\newtheorem{question}[equation]{Question}

\newtheorem{remark}[equation]{Remark}

\newcommand{\thistheoremname}{}
\newtheorem{genericthm}[equation]{\thistheoremname}

\newcommand{\thistheoremnames}{}
\newtheorem*{genericthms}{\thistheoremnames}
\newenvironment{para*}[1]
  {\renewcommand{\thistheoremnames}{#1}%
   \begin{genericthms}}
  {\end{genericthms}}

\expandafter\let\expandafter\oldproof\csname\string\proof\endcsname
\let\oldendproof\endproof
\renewenvironment{proof}[1][\proofname]{%
  \oldproof[\upshape \bfseries #1:]%
}{\oldendproof}
%tirar \upshape poe em italico

\makeatletter
\def\@makechapterhead#1{%
  \vspace*{50\p@}%
  {\parindent \z@ \raggedright \normalfont
    \interlinepenalty\@M
    \Huge\bfseries  \thechapter.\quad #1\par\nobreak
    \vskip 40\p@
  }}
\makeatother

%------------ABBREVIATIONS-------------%

\def \vazio{\emptyset}
\def \a{\alpha}
\def \R {\mathbb{R}}

\def \N{\mathbb{N}}
\def \D{\textup{D}}
\def \e{\varepsilon}
\def \d{\,\textup{d}}
\def \t{\Delta}

\def \exc{\backslash}
\def \p{\partial}

\def \mc{\mathcal}

\def \wstar{\overset{\ast}{\rightharpoonup}}
\def \w{\rightharpoonup}
\def \supp{\textup{supp}\,}
\def \BMO{\textup{BMO}}
\def \hs{\hspace{0.5cm}}

\def \tp{\textup}
\def \mb{\mathbb}

% \setlength{\parskip}{0.1cm}

%-------------nice pairing for YM-------------%
\makeatletter
\DeclareFontFamily{OMX}{MnSymbolE}{}
\DeclareSymbolFont{MnLargeSymbols}{OMX}{MnSymbolE}{m}{n}
\SetSymbolFont{MnLargeSymbols}{bold}{OMX}{MnSymbolE}{b}{n}
\DeclareFontShape{OMX}{MnSymbolE}{m}{n}{
    <-6>  MnSymbolE5
   <6-7>  MnSymbolE6
   <7-8>  MnSymbolE7
   <8-9>  MnSymbolE8
   <9-10> MnSymbolE9
  <10-12> MnSymbolE10
  <12->   MnSymbolE12
}{}
\DeclareFontShape{OMX}{MnSymbolE}{b}{n}{
    <-6>  MnSymbolE-Bold5
   <6-7>  MnSymbolE-Bold6
   <7-8>  MnSymbolE-Bold7
   <8-9>  MnSymbolE-Bold8
   <9-10> MnSymbolE-Bold9
  <10-12> MnSymbolE-Bold10
  <12->   MnSymbolE-Bold12
}{}

\let\llangle\@undefined
\let\rrangle\@undefined
\DeclareMathDelimiter{\llangle}{\mathopen}%
                     {MnLargeSymbols}{'164}{MnLargeSymbols}{'164}
\DeclareMathDelimiter{\rrangle}{\mathclose}%
                     {MnLargeSymbols}{'171}{MnLargeSymbols}{'171}
\makeatother

%--------------------measure restriction--------------------%
\newcommand{\mres}{\mathbin{\vrule height 1.6ex depth 0pt width
0.13ex\vrule height 0.13ex depth 0pt width 1.3ex}}

% --------keywords-----------%

%-----------------------MAIN BODY-----------------------%

\begin{document}

 \title{\LARGE \textbf{Quasiconvexity, null Lagrangians, and Hardy space integrability under constant rank constraints}}

\author[1]{{\Large Andr\'e Guerra}}
\author[2]{{\Large Bogdan Rai\cb{t}\u{a}}}

\affil[1]{\small University of Oxford, Andrew
  Wiles Building, Woodstock Rd, Oxford OX2 6GG,
  United Kingdom
  \protect\\
  {\tt{guerra@maths.ox.ac.uk}}
  \vskip.2pc \
}

\affil[2]{\small 
Max Planck Institute for Mathematics in the Sciences, Inselstraße 22, 04103 Leipzig, Germany  \protect\\
  {\tt{raita@mis.mpg.de}} 
}

\date{}

\maketitle

\vspace{-0.65cm}
\begin{abstract}
We present a systematic treatment of the theory of Compensated
Compactness under Murat's constant rank assumption. We give a short
proof of a sharp weak lower semicontinuity result for signed
integrands, extending aspects of the results of Fonseca--M\"uller. The null
Lagrangians are an important class of signed integrands, since they
are the weakly continuous functions. We show that they are precisely
the compensated compactness quantities with Hardy space integrability,
thus proposing an answer to a question raised by
Coifman--Lions--Meyer--Semmes. Finally we provide an effective way of
computing the null Lagrangians associated with a given operator.
\end{abstract}

\unmarkedfntext{
\hspace{-0.85cm} 
\emph{2010 Mathematics Subject Classification:} 49J45 (26D10, 35E20, 42B20, 42B30)

\noindent \emph{Keywords:} Compensated compactness, $\mc A$-quasiconvexity, Weak continuity, Weak lower semicontinuity, Linear partial differential operators, Constant rank operators, Hardy spaces.
}

\vspace{0.2cm}

\section{Introduction}

Let $\mc A$ be a linear partial differential operator acting on fields $v\colon \R^n\to \mb V$, for some finite-dimensional inner product space $\mb V$.
In this paper, we address the following question:
\begin{para*}{Main question}
Are there special quantities $F\colon \mb V \to \R$ which are well-behaved with respect to solutions of the system $\mc A v=0$? In particular:
\begin{itemize}[itemsep=0pt, leftmargin=0.5cm]
\item For solutions of $\mc A v=0$, does $F(v)$ benefit from \textbf{compensated regularity}?,  e.g.\
\begin{equation}
	v\in C^\infty_{c}(\R^n, \mb V) \tp{ and } \,\mc A v=0
	\hs \implies \hs F(v)\in \mathscr
	H^1(\R^n)\label{eq:hardyimpintro}.
\end{equation}
\item For solutions of $\mc A v=0$, does $F(v)$ benefit from \textbf{compensated compactness}?, e.g.\
\begin{equation}
v_j\wstar v \tp{ in } \mathscr D'(\R^n,\mb V) \tp{ and } \mc A v_j=0 \hs \implies \hs F(v_j)\wstar F(v)\tp{ in } \mathscr D'(\R^n).\label{eq:CC_intro}
\end{equation}	
\end{itemize}
If there are such quantities, how do we characterize and compute them?
\end{para*} 
It is clear that, for the first part, one should look for \emph{nonlinear} quantities, since otherwise $F(v)$ has precisely the same regularity as $v$. In \eqref{eq:hardyimpintro}, $\mathscr H^1(\R^n)$ denotes the real Hardy space, which can be thought of as a proper subspace of $L^1(\R^n)$ whose elements have cancellations at all scales and, therefore, have an additional logarithm of integrability. 
Being able to identify $L^1$-quantities that in fact have Hardy space
integrability is often important in PDE: this has been useful in Fluid
Dynamics \cite{Evans1994a,Faraco2019,Faraco2019a} as well as Differential Geometry \cite{Helein2002,Muller1995} and we refer the reader to \cite{Coifman1993} for further examples and references.

Weakly continuous functions, as in \eqref{eq:CC_intro}, can be thought of as representing physical quantities that are robust to errors in measurements induced from small-scale oscillations. We call these quantities  \textbf{null Lagrangians} \cite{Ball1977} or $\mc A$-quasiaffine functions \cite{Dacorogna2007} and they are the classical objects of study in the \textsc{Murat}--\textsc{Tartar} theory of Compensated Compactness \cite{Murat1978,Murat1981,Tartar1979,Tartar1983}. In the last four decades,  the theory was developed much further, having found applications in Continuum Mechanics
\cite{DiPerna1985,DiPerna1987,Evans1994a}, Homogenization \cite{Braides2000,Briane2016,Milton1990,Milton2002} and Nonlinear Analysis
\cite{Baia2013,DePhilippis2016,Fonseca2004,Joly1995, Muller2003}. We also refer the reader to the recent papers \cite{Arroyo-Rabasa2018,Conti2019,Davoli2018,Prosinski2018}.

\bigskip

To be precise, in our main question we consider an operator $\mc A$ of the form
$$\mc A=\sum_{|\a|=l} A_\a \p^\a, \qquad \tp{where }A_\a \in \tp{Lin}(\mb V, \mb W),$$
for some finite dimensional inner product spaces $\mb V,\mb W$.
The prototypical example we have in mind is $\mc A=(\tp{div}, \tp{curl})$. For a domain $\Omega\subset \R^n$ and fields $E,\,B\colon \Omega\to \R^{n}$ in $L^2(\Omega)$, which we think of as the electric and the magnetic fields respectively, \textsc{Coifman}--\textsc{Lions}--\textsc{Meyer}--\textsc{Semmes} \cite{Coifman1993} proved that \eqref{eq:hardyimpintro} holds, i.e.\
\begin{equation}
\label{eq:divcurlhardy}
\tp{div}\,B=0, \tp{ curl\,}E=0 \quad \implies \quad B\cdot E\in \mathscr H^1(\R^n).\end{equation}
The implication \eqref{eq:divcurlhardy} was inspired by a surprising and remarkable result of
\textsc{M\"uller} \cite{Muller1990} and it can be proved through the
\textsc{Coifman}--\textsc{Rochberg}--\textsc{Weiss} commutator theorem
\cite{Coifman1976}, see also \cite{Lenzmann2018} for a different
approach and \cite{Dafni2005} for local, non-homogeneous versions.
The quantity $E\cdot B$ is also weakly continuous for the system $(\tp{div}, \tp{curl})$, a fact which goes back to the pioneering work of \textsc{Murat} and \textsc{Tartar} \cite{Tartar1979}:  if $d=n^2+1$, then \eqref{eq:CC_intro} holds, i.e.\
\begin{equation}
\label{eq:divcurlwc}
\begin{rcases}
(B_j,E_j)\w (B,E) & \tp{in } L^2(\Omega,\R^{2n})\\
(\tp{div\,} B_j,\tp{curl\,}E_j)\to (\tp{div\,} B,\tp{curl\,}E) & \tp{in } H^{-1}_\tp{loc}(\Omega,\R^d)
\end{rcases}
\implies B_j\cdot E_j \wstar  B\cdot E \tp{ in } \mathscr D'(\Omega).
\end{equation}

Continuum Mechanics furnishes plenty of interesting examples beyond electromagnetism: 
in the theory of elasticity  the deformation gradient is irrotational, while the linearized strain satisfies the Saint-Venant compatibility condition, and in incompressible fluid flow the velocity field is divergence-free; see also Example \ref{ex:intro}. In these examples the operator $\mc A$ has an important non-degeneracy property:
\begin{equation}
\label{eq:assumption}
\mc{A} \tp{ has constant rank} \hs \tp{and} \hs \tp{span}\, \Lambda_{\mc A}=\mb V.
\end{equation}
Here $\Lambda_{\mc A}$ is the wave cone of the operator $\mc A$, see also Section \ref{sec:CR} for notation and terminology, and
the spanning assumption is natural since
weakly continuous quantities are completely unconstrained along directions
not in $\tp{span}\, \Lambda_\mc A$. The constant rank assumption is standard \cite{Fonseca1999,Murat1981} and, per the results of the authors in \cite{GuerraRaita2020}, it is equivalent to a certain $L^p$-estimate on which many results in compensated compactness theory crucially rely. In the constant rank case, weak continuity is well understood since \textsc{Murat}'s work 
\cite{Murat1981} but, without this assumption, very little is known, an important exception being the case of separate convexity \cite{Muller1999b,Tartar1993}, which was proposed by \textsc{Tartar} as a toy model for rank-one convexity. The case of quadratic functions $F$ is also special: in this setting, there is a satisfactory theory both for Hardy integrability \cite{Coifman1993,Li1997} and for weak continuity \cite{Tartar1979}. 

\medskip

Returning to the div-curl example, we observe that the inner product is both weakly continuous and has Hardy space integrability. Hence, the following natural question was asked in \cite{Coifman1993}: is this a general phenomenon, i.e.\ is it the case that \eqref{eq:hardyimpintro} and \eqref{eq:CC_intro} are \emph{equivalent}? Our main theorem shows that, under the standard assumption \eqref{eq:assumption}, this is indeed the case:

%\begin{question}[\cite{Coifman1993}]\label{qu:CLMS}
%Are the classes of nonlinear weakly continuous quantities and of nonlinear quantities with Hardy space integrability the same?
%\end{question}

\begin{bigthm}[Hardy integrability equals weak continuity]
\label{teo:hardyintro}
Assume \eqref{eq:assumption} and let $F\colon \mb V\to \R$ be a  locally bounded, Borel function that is not affine. Then \eqref{eq:hardyimpintro} holds if and only if \eqref{eq:CC_intro} holds and in that case we have:
\begin{itemize}[itemsep=0pt, topsep=5pt]
\item $F$ is a polynomial of degree $s\leq \min\{n,\dim \mb V\}$ and it is $\mc A$-quasiaffine, i.e.\ $F$ and $-F$ are both $\mc A$-quasiconvex;
\item if moreover $F$ is homogeneous, there is an estimate
$$\Vert F(v)\Vert_{\mathscr H^1(\R^n)} \leq C\Vert v \Vert_{L^s(\R^n)} \qquad \tp{for all } v\in L^s(\R^n) \tp{ such that } \mc A v=0 \tp{ in } \mathscr D'(\R^n).$$
\end{itemize}
The class of such polynomials can be computed explicitly by solving an algebraic system of linear equations.
\end{bigthm}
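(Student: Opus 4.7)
The plan is to route the equivalence through the hub of \emph{$\mc A$-quasiaffinity}, and prove that each of \eqref{eq:CC_intro} and \eqref{eq:hardyimpintro} is equivalent to $F$ being a polynomial $\mc A$-quasiaffine function of the stated degree. The equivalence between weak continuity \eqref{eq:CC_intro} and $\mc A$-quasiaffinity, together with the fact that, under the constant rank assumption, every locally bounded Borel $\mc A$-quasiaffine function is a polynomial of degree at most $\dim \mb V$, is classical (\textsc{Murat}, \textsc{Fonseca}--\textsc{M\"uller}) and presumably refined earlier in the paper. The new content of Theorem \ref{teo:hardyintro} thus lies in the equivalence with Hardy integrability and in the sharper degree bound $s\leq\min\{n,\dim \mb V\}$.

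For the direction ``$\mc A$-quasiaffine $\Rightarrow$ Hardy integrability,'' I would use the constant rank structure to produce a potential operator $\mc B$ satisfying $\mc A \mc B = 0$ and $\ker \mc A = \tp{Im}\,\mc B$ at the Fourier level (as in \cite{GuerraRaita2020}), so that every $v\in C^\infty_c(\R^n,\mb V)$ with $\mc A v = 0$ can be written as $v = \mc B u$. Then $F(\mc B u)$ becomes a full contraction of $s$ derivatives of $u$, schematically $\sum \p^{\a_1}u\otimes \dots\otimes\p^{\a_s}u$. Testing against a $\BMO$ function and integrating by parts peels off one derivative per factor and produces Riesz-type commutators, which are bounded on $L^s$ by the Coifman--Rochberg--Weiss theorem; $\mathscr H^1$--$\BMO$ duality then delivers both $F(v)\in \mathscr H^1(\R^n)$ and the estimate $\|F(v)\|_{\mathscr H^1}\leq C\|v\|_{L^s}$ in the homogeneous case.

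For the converse ``Hardy integrability $\Rightarrow$ $\mc A$-quasiaffine polynomial,'' I would exploit the moment condition $\int_{\R^n} f\d x=0$ satisfied by every $f\in \mathscr H^1(\R^n)$: applied to compactly supported $v$ with $\mc A v = 0$, it gives $\int F(v)\d x = 0$. Combined with a Helmholtz-type projection onto $\ker\mc A$ (again from constant rank) and the spanning hypothesis $\tp{span}\,\Lambda_\mc A = \mb V$, which ensures sufficiently many test fields, this integral identity upgrades via a standard cutoff-and-periodisation scheme to the full $\mc A$-quasiaffinity of $F$; the polynomial structure and the bound $s\leq\dim \mb V$ then follow from the classical theory.

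The sharper bound $s\leq n$ should come from the potential representation: an $\mc A$-quasiaffine polynomial of degree $s$ is a sum of Jacobian-like minors of $s$-tuples of derivatives of $u$, and such minors vanish identically on $\R^n$ as soon as $s>n$ by skew-symmetry in the exterior algebra. The explicit algorithmic computation of null Lagrangians then reduces to writing a generic polynomial of degree $\leq\min\{n,\dim \mb V\}$ and imposing the vanishing of its full polarization on $s$-tuples spanning subspaces of the wave cone $\Lambda_\mc A$, which gives a linear algebraic system on its coefficients. I expect the main obstacle to lie in the converse direction, namely in rigorously upgrading the mean-zero identity $\int F(v)\d x=0$ to full $\mc A$-quasiaffinity without any extra regularity on $F$; a careful cutoff-and-projection argument using $\mc B$ should be the key ingredient.
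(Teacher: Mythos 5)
Your overall architecture coincides with the paper's: everything is routed through $\mc A$-quasiaffinity, the potential operator $\mc B$ of \cite{Raita2018} is used to replace $\mc A$-free fields by $\mc B u$, the converse direction rests on the zero-mean property of $\mathscr H^1$ plus an upgrade from quasiaffinity at $0$ to quasiaffinity at every $z$ (the paper does this with the trick of choosing $\phi$ with $\mc B\phi=z$ on $\supp u$ and differentiating in $t$, rather than a periodisation scheme, but this is a detail), and the bound $s\leq\min\{n,\dim\mb V\}$ and the linear-algebra algorithm come from the representation of null Lagrangians by minors.

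The genuine gap is in the central analytic step, ``$\mc A$-quasiaffine and $s$-homogeneous $\Rightarrow$ $\Vert F(v)\Vert_{\mathscr H^1}\leq C\Vert v\Vert_{L^s}^s$.'' Writing $F(\mc B u)$ as ``a full contraction of $s$ derivatives of $u$'' and claiming that integration by parts ``peels off one derivative per factor and produces Riesz-type commutators bounded by Coifman--Rochberg--Weiss'' does not use the quasiaffinity of $F$ at all: an arbitrary $s$-linear contraction of $k$-th derivatives of $u$ is certainly not in $\mathscr H^1(\R^n)$, so the algebraic cancellation must enter the estimate in a concrete, quantitative form, and your sketch never says how. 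Moreover, the CRW theorem is a linear commutator estimate; it handles the quadratic ($s=2$) case in the spirit of \cite{Coifman1993,Li1997}, but for $s\geq 3$ there is no obvious way to reduce a general $s$-homogeneous null Lagrangian to a sum of such commutators without first knowing its structure. The missing ingredient is precisely the structure theorem of Ball--Currie--Olver (Theorem \ref{teo:BCO}, used through Lemma \ref{lema:null_lags}): after composing with $T$, an $s$-homogeneous $\mc A$-quasiaffine $F$ is a linear combination of $s\times s$ minors of $\D U$, $U=\D^{k-1}u$, and each minor has a div-curl structure $M(\D U)=\langle \D_{x'}U_1',\Sigma\rangle$ with $\tp{div}_{x'}\Sigma=0$ and $|\Sigma|\lesssim\prod_{i\geq2}|\D U_i'|$. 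The paper then runs the CLMS maximal-function argument (convolve with $\psi_t$, integrate by parts against the divergence-free factor, apply Poincar\'e--Sobolev -- this is where $s\leq n$ is needed -- and H\"older with the maximal function), not a commutator argument. Either you import this representation (or the alternative Leibniz-rule computation in the remark following Proposition \ref{prop:auxh1}, which uses the cancellation $\int F(\mc Bu)\d x=0$ directly), or your forward direction does not go through as written.
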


Theorem \ref{teo:hardyintro} shows that compensated
compactness and compensated regularity are two facets of the algebraic cancellations in the nonlinearity, which compensate the lack of ellipticity of the operator $\mc A$.

When $F$ is linear, it is possible to make a statement similar to the one in Theorem \ref{teo:hardyintro},  c.f.\ Theorem \ref{teo:hardy}, although we show that there is no estimate in that case. We would also like to highlight that we provide an
effective way of computing the $\mc A$-quasiaffine functions.
\textsc{Murat} \cite{Murat1981} derived the algebraic identity (\ref{eq:derivatives}) that
characterizes these functions but, as he was already
aware, it is in general very difficult to decide
which nonlinear polynomials, if any, satisfy this identity. In order
to  deal with this issue, we  crucially rely on the work of
\textsc{Ball}--\textsc{Currie}--\textsc{Olver} \cite{Ball1981}.
We deduce that all $\mc A$-quasiaffine functions can be written as coefficients of differential forms, which answers in the positive a question of \textsc{Robbin}--\textsc{Rogers}--\textsc{Temple} \cite[\S 5]{Robbin1987} under the assumption (\ref{eq:assumption}).

To prove  Theorem
\ref{teo:hardyintro} we rely on ideas appearing in the literature in specific
instances, typically for the operators $\mc A=\tp{curl}$ or $\mc
A=(\tp{div}, \tp{curl})$  \cite{Coifman1993,Grafakos1992,Lindberg2017}, as well as new techniques that we introduce. Our main new tool is an $L^p$ Helmholtz--Hodge decomposition for constant rank operators, which is based on the existence of potential operators. These were constructed recently by the
second author in \cite{Raita2018}. 

\medskip

In the setup of Theorem \ref{teo:hardyintro}, it is natural to wonder whether the convergence in \eqref{eq:CC_intro} can be improved. \textsc{Tartar}
\cite[Lemma 7.3]{Tartar2010} showed that one cannot upgrade
weak-$*$ convergence in measures to weak convergence in
$L^1$, i.e.\ one cannot test the convergence against $L^\infty$
functions. However, as a by product of Theorem \ref{teo:hardyintro}, one can test the convergence against functions in
$\tp{VMO}(\R^n)$; this a space which is neither contained nor contains $L^\infty(\R^n)$:

\begin{bigthm}[Improved and quantified convergence]\label{teo:nulllagintro}
As before assume \eqref{eq:assumption} and let $F\colon \mb V\to \R$ be $\mc A$-quasiaffine and $s$-homogeneous for some $s\geq 2$. Then
	\begin{equation}
	v_j \w v  \tp{ in } L^s(\R^n,\mb V)\tp{ and } \mc A v_j=0\hs \implies \hs F(v_j)\wstar F(v) \tp{ in } \mathscr H^1(\R^n).\label{eq:hardyimpintro2}
	\end{equation} 
Moreover, let $p\in (s-1,\infty)$ and $q\in (1,\infty)$ be such that  
	$\frac{s-1}{p} + \frac 1 q=1$. For $\mc A$-free fields $v_1, v_2 \in C^\infty_{c}(\R^n, \mb V)$ and any $\varphi \in C^\infty_c(\R^n)$ we have the uniform estimate
	$$
	\left|\int_{\R^n} \varphi\left(F(v_1)-F(v_2)\right)\d x\right|\leq C \Vert v_1 - v_2\Vert_{\dot W^{-1,q}} \left(\Vert v_1\Vert_{L^p} + \Vert v_2 \Vert_{L^p} \right)^{s-1} \Vert \D \varphi \Vert_{L^\infty}.
	$$
\end{bigthm}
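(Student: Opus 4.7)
\emph{Part (a).} The improved convergence follows essentially directly from Theorem~A. Scaling and the $s$-homogeneity of $F$ upgrade the estimate there to
\[
\|F(v)\|_{\mathscr H^1(\R^n)}\leq C\|v\|_{L^s(\R^n)}^s \quad\text{for every } v \in L^s(\R^n,\mb V) \text{ with } \mc A v = 0,
\]
after extending from $C^\infty_c\cap\ker\mc A$ by density (available via the potential operator of the second author). Since $v_j\w v$ in $L^s$ we have $\sup_j\|v_j\|_{L^s}<\infty$, so $\{F(v_j)\}$ is uniformly bounded in $\mathscr H^1(\R^n)$. Using that $\mathscr H^1(\R^n) = (\textup{VMO}(\R^n))^*$, Banach--Alaoglu produces weak-$*$ accumulation points; the weak continuity statement \eqref{eq:CC_intro} in Theorem~A forces each such limit to coincide with $F(v)$ in $\mathscr D'$, and hence in $\mathscr H^1$. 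A routine subsequence argument then yields $F(v_j)\wstar F(v)$ in $\mathscr H^1(\R^n)$.

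\emph{Part (b).} The plan is to combine polarization of $F$ with a single integration by parts, the latter exploiting the divergence-form representation of null Lagrangians. Let $T\colon \mb V^{\times s}\to\R$ be the symmetric $s$-linear form with $T(v,\ldots,v)=F(v)$. The telescoping identity
\[
F(v_1)-F(v_2) = \sum_{k=0}^{s-1} T\bigl(v_1-v_2,\, \underbrace{v_1,\ldots,v_1}_{k},\, \underbrace{v_2,\ldots,v_2}_{s-1-k}\bigr)
\]
reduces matters to bounding each of the $s$ integrals $\int\varphi\, T(v_1-v_2,w_1,\ldots,w_{s-1})\d x$ (with $w_j\in\{v_1,v_2\}$) by the right-hand side of the claim. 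Using the potential operator $\mc B$, write $v_i=\mc B u_i$ so that $\|u_1-u_2\|_{L^q}\lesssim\|v_1-v_2\|_{\dot W^{-1,q}}$. The representation of $\mc A$-quasiaffine polynomials as coefficients of wedge products (Ball--Currie--Olver, together with the representation result of the present paper) polarizes to a divergence identity
\[
T(\mc B u_0,\,\mc B u_1,\ldots,\mc B u_{s-1}) = \p_i\, G^i(u_0;\,\mc B u_1,\ldots,\mc B u_{s-1}),
\]
where $G^i$ is linear in $u_0$ (\emph{with no derivatives on $u_0$}) and polynomial of total degree $s-1$ in the remaining arguments. Setting $u_0:=u_1-u_2$, integrating by parts once, and applying Hölder's inequality with exponents $(\infty,q,p,\ldots,p)$ (conjugate by the assumption $\tfrac{s-1}{p}+\tfrac{1}{q}=1$) gives
\[
\Bigl|\int \varphi\, T(v_1-v_2,w_1,\ldots,w_{s-1})\d x\Bigr| \leq C\|\D\varphi\|_{L^\infty}\|v_1-v_2\|_{\dot W^{-1,q}}(\|v_1\|_{L^p}+\|v_2\|_{L^p})^{s-1},
\]
and summation over the $s$ polarized terms finishes the proof.

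\emph{Main obstacle.} The technical crux is the polarized divergence identity. While the scalar statement that $F(\mc B u)$ is a divergence is classical under \eqref{eq:assumption}, the polarized form in which one argument appears linearly and \emph{without derivatives}, with all other arguments unrestricted, is more delicate; it must be extracted from the explicit wedge-product structure of null Lagrangians. A related subtlety is that if $\mc B$ has order $k_\mc B>1$, gaining $\dot W^{-1,q}$ control rather than $\dot W^{-k_\mc B,q}$ is not automatic: one must then rearrange derivatives using the quasiaffine structure so that exactly one derivative transfers to $\varphi$, matching the Lipschitz norm $\|\D\varphi\|_{L^\infty}$ on the right-hand side.
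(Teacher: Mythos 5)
Your Part (a) is essentially the paper's own argument (uniform $\mathscr H^1$ bound for $s$-homogeneous quasiaffine $F$, distributional weak continuity, then identification of weak-$*$ limits in $\mathscr H^1=(\tp{VMO})^*$ by density of test functions), and it is correct.

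Part (b), however, has a genuine gap exactly at the point you flag as the ``main obstacle'', and in the form you state it the key step is not just unproven but false whenever the potential $\mc B$ has order $k\geq 2$ --- which is the generic situation under \eqref{eq:assumption}: the paper stresses (after Proposition \ref{prop:nonunique}) that no relation between the orders of $\mc A$ and $\mc B$ may be assumed (e.g.\ the symmetric gradient only admits second-order annihilators, and some first-order annihilators only have $\D^k$ as a known potential). First, the inequality $\Vert u_1-u_2\Vert_{L^q}\lesssim \Vert v_1-v_2\Vert_{\dot W^{-1,q}}$ with $v_i=\mc B u_i$ is wrong by scaling for $k\geq 2$: $u$ is recovered from $\mc B u$ only at the cost of $k$ derivatives, so $\Vert u_1-u_2\Vert_{L^q}$ corresponds to $\Vert v_1-v_2\Vert_{\dot W^{-k,q}}$. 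Second, the polarized divergence identity $T(\mc B u_0,\mc B u_1,\dots,\mc B u_{s-1})=\p_i G^i(u_0;\mc B u_1,\dots,\mc B u_{s-1})$ with $u_0$ undifferentiated cannot hold for $k\geq 2$: viewing both sides as polynomials in the (algebraically independent) jet variables, every monomial on the left contains a $k$-th derivative of $u_0$ and no lower-order one, while every monomial on the right contains only $u_0$ or its first derivatives; for $k\geq2$ this forces both sides to vanish. So the Piola-type structure you invoke is genuinely a first-order ($k=1$) phenomenon.

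The paper's resolution is to leave $U_0\equiv\D^{k-1}(u_1-u_2)$, not $u_1-u_2$, undifferentiated. Concretely, in Proposition \ref{prop:quant_est} one first reduces to $\mc B=\D^k$ via Lemma \ref{lema:null_lags}, using the Fourier multiplier $\mc B^\dagger(\xi/|\xi|)$ (this is where constant rank enters, via H\"ormander--Mihlin) to get the correct bookkeeping $\Vert \D^{k-1}(u_1-u_2)\Vert_{L^q}\leq C\Vert v_1-v_2\Vert_{\dot W^{-1,q}}$; then Theorem \ref{teo:BCO} writes $F\circ T$ as a combination of minors $M(\D U)$ with $U=\D^{k-1}u$, and the cofactor telescoping identity \eqref{eq:algebraicidentity} expresses $M(\D U_1)-M(\D U_2)$ as a divergence in which $U_1'-U_2'$ appears without derivatives; one integration by parts onto $\varphi$ and H\"older with $\tfrac{s-1}{p}+\tfrac1q=1$ then give \eqref{eq:1stest}. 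Your overall plan (telescoping, divergence structure, one integration by parts, H\"older) is the right skeleton and matches the paper in spirit, but the two ingredients you defer --- the correct polarized divergence identity and the passage from $\dot W^{-1,q}$ control of $v_1-v_2$ to $L^q$ control of the undifferentiated factor --- are precisely where the proof lives, and as proposed they do not survive higher-order potentials.
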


The last part of Theorem \ref{teo:nulllagintro} generalizes the quantitative statements in the $\mc A=\tp{curl}$ case of \cite{Brezis2011b} and \cite[\S 8]{Iwaniec2001}, see also 
\cite{Fonseca2005,Iwaniec2002b,Iwaniec1992a}. It shows that, under weaker integrability hypothesis, \emph{distributional} $\mc A$-quasiaffine quantities are still weakly continuous, c.f.\ Section~\ref{sec:estimates} and \cite{GuerraRaitaSchrecker2020}.

\medskip

We conclude this introduction by discussing the more general class of $\mc A$-quasiconvex functions and their weak lower semicontinuity properties. Due to Theorems \ref{teo:hardyintro} and \ref{teo:nulllagintro}, where the functions are polynomials, we are interested in the general case of signed integrands.
This case is not covered by the influential work of \textsc{Fonseca}--\textsc{M\"uller} \cite{Fonseca1999} (see also \cite{Fonseca2010}), where only positive integrands are studied. When the integrand changes sign one needs to deal with the possibility of concentrations of the
sequence on the boundary of the domain. When this happens, weak lower
semicontinuity breaks down: this is already the case when
$\mc A=\tp{curl}$, as an example due to \textsc{Tartar} shows
\cite{Ball1984}. As a consequence, the convergence should be tested against functions which vanish on the boundary.
In Section \ref{sec:lsc}, we prove the following result:

\begin{bigthm}[Weak lower semicontinuity]\label{teo:lscintro}
Let $\Omega\subset \R^n$ be a bounded domain, $p\in (1,\infty)$, and
let $F\colon \mb V\to \R$ be an $\mc A$-quasiconvex function such that
$|F(v)|\leq C(|v|^p+1)$. Then, for all $\varphi \in C^\infty_c(\Omega)$ with $\varphi\geq 0$,
$$\begin{rcases}
v_j \w v &\tp{in }L^p(\Omega,\mb V)\\ 
\mc A v_j \to \mc A v &\tp{in } W^{-l, p}_\tp{loc}(\Omega, \mb V)
\end{rcases}
\quad
\implies
\quad
\liminf_{j\to \infty} \int_\Omega \varphi F(v_j) \d x \geq \int_\Omega \varphi F(v) 
\d x.
$$
This is sharp in the sense that $\varphi$ cannot be taken to be in the space
$C^\infty(\overline \Omega)$.
\end{bigthm}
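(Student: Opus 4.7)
I will base the argument on the Fonseca--M\"uller machinery \cite{Fonseca1999} for $\mc A$-quasiconvex integrands with $p$-growth, modified to accommodate signed $F$. The crucial observation is that $\supp\varphi$ is compactly contained in $\Omega$, which annihilates boundary concentrations --- precisely the source of failure for signed integrands, as Tartar's classical example in \cite{Ball1984} shows.

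First, I reduce to sequences that are exactly $\mc A$-free on a subdomain containing $\supp\varphi$. Pick $\Omega' \Subset \Omega$ with $\supp\varphi \subset \Omega'$; since $\mc A v_j \to \mc A v$ in $W^{-l,p}_\tp{loc}(\Omega,\mb W)$, the potential-operator theory of \cite{Raita2018} together with the sharp $L^p$-estimates of \cite{GuerraRaita2020} provides a decomposition $v_j\mres\Omega' = \tilde v_j + e_j$ with $\mc A \tilde v_j = 0$ in $\Omega'$ and $e_j \to 0$ in $L^p(\Omega',\mb V)$, so the $p$-growth of $F$ lets me replace $(v_j)$ by $(\tilde v_j)$ without loss.

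Second, I cover $\supp \varphi$ by small disjoint cubes $Q_k = Q(x_k, r_k)$ centered at Lebesgue points $x_k$ of $v$ on which $\varphi \approx \varphi(x_k)$ and $v \approx v(x_k)$, and on each $Q_k$ apply $\mc A$-quasiconvexity to a cut-off $\mc A$-free perturbation of $\tilde v_j - v(x_k)$; the localization inside $Q_k$ without destroying the $\mc A$-free constraint is carried out using the potential operator together with a smooth cutoff on the primitive. Summing over $k$ and sending $r_k \to 0$ produces
$$\liminf_{j\to\infty} \int_\Omega \varphi F(\tilde v_j) \d x \geq \int_\Omega \varphi F(v) \d x + \mc C,$$
where $\mc C$ collects the concentrations of $(|\tilde v_j|^p)$. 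By $|F|\leq C(1+|v|^p)$, the quantity $|\mc C|$ is controlled by $\int \varphi \d\lambda$, with $\lambda$ the weak-$*$ limit of $|\tilde v_j|^p \L^n$; the boundary portion $\lambda\mres\p\Omega$ is annihilated by the compact support of $\varphi$, leaving only the interior contribution.

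The main obstacle I anticipate is showing that the interior piece of $\mc C$ is non-negative: for signed $F$ the $p$-recession $F^\infty(\xi):=\limsup_{t\to\infty} t^{-p}F(t\xi)$ need not be $\mc A$-quasiconvex, so this must be argued directly by blowing up at each interior concentration point $x_0$, producing an $\mc A$-free sequence compactly supported in $\R^n$ of vanishing mean whose $p$-th power concentrates, and then invoking the $\mc A$-quasiconvex inequality for $F$ at base point $v(x_0)$ to pass to the limit. For sharpness, I adapt Tartar's construction \cite{Ball1984}: a $\tp{curl}$-free sequence concentrating mass at a boundary point of $\Omega$, together with a suitable $\mc A$-quasiaffine $F$, for which $\int_\Omega F(\n u_j) \d x \to -\infty$ while $\int_\Omega F(\n u) \d x = 0$, ruling out semicontinuity for any test $\varphi \in C^\infty(\overline\Omega)$ that does not vanish at the concentration point.
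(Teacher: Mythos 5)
Your overall strategy (localize on small cells around Lebesgue points, freeze $\varphi$ and $v$, replace the oscillating part by compactly supported potentials $\mc B u_j$ so as to invoke $\mc A$-quasiconvexity, and use $\supp\varphi\Subset\Omega$ to kill boundary effects) is essentially the one used in the paper's Theorem \ref{teo:ourlsc}, but the step you yourself flag as the ``main obstacle'' is exactly where the argument is incomplete, and the blow-up plan you sketch does not close it. When you cut off inside a cube $Q_k$ (whether you truncate the field or the primitive), the discarded error lives in a layer near $\p Q_k$, and its $\limsup$ in $L^p$ is controlled only by the mass that the concentration measure $\lambda$ (the weak-$*$ limit of $|v_j-v|^p\,\mc L^n$) puts near $\p Q_k$. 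This term has no sign: it cannot be absorbed by quasiconvexity, by recession functions, or by a blow-up at interior points --- it has to be made \emph{small}, which is a measure-theoretic selection issue, not a convexity issue. The paper resolves it by first perturbing the partition (a triangulation shifted by $te$ with $t$ outside a countable exceptional set) so that $\lambda(\p T)=0$ for every cell $T$; only then does Proposition \ref{prop:bogdan11} produce $u_j\in C^\infty_c(T,\mb U)$ with $v_j-v-\mc B u_j\to 0$ in $L^p(T)$, after which $\int_T F(a+\mc B u_j)-F(a)\d x\geq 0$ is applied directly. With this choice no interior ``concentration term'' $\mc C$ ever appears: the quasiconvexity inequality already accommodates concentrating test fields, so no statement about $p$-recession functions is needed. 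In your write-up the choice of cubes with $\lambda(\p Q_k)=0$ (or an equivalent slicing/averaging over layers) is missing, and the substitute you propose --- blowing up at interior concentration points and ``invoking the $\mc A$-quasiconvex inequality at base point $v(x_0)$'' --- is left unexecuted; carrying it out would in any case run into the same difficulty, since approximating the concentrating blow-ups by compactly supported potentials again requires that no mass sit on the boundary of the blow-up cube.

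Two smaller points. First, the preliminary reduction ``$v_j\!\restriction_{\Omega'}=\tilde v_j+e_j$ with $\mc A\tilde v_j=0$ in $\Omega'$ and $e_j\to0$ in $L^p$'' is inconsistent whenever $\mc A v\neq 0$: it would force $\mc A v_j=\mc A e_j\to 0$ in $W^{-l,p}(\Omega')$, contradicting $\mc A v_j\to\mc A v$. One must decompose $v_j-v$, writing $v_j-v=\mc B u_j+e_j$, as in Proposition \ref{prop:bogdan11}. Second, in the sharpness discussion, $\int_\Omega F(\n u_j)\d x\to-\infty$ is impossible under the growth bound $|F(v)|\leq C(|v|^p+1)$ and $L^p$-boundedness of the sequence on a bounded domain; Tartar's example \cite{Ball1984} gives a finite, strictly negative limit of $\int_\Omega\det\n u_j\d x$ while $u_j\w 0$, which is all that is needed and is precisely what the paper invokes.
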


The methods used to prove Theorem
\ref{teo:lscintro} are distinct and more elementary that the ones from
\cite{Fonseca1999} and, in particular, we avoid the use of 
Young measure machinery. The proof also extends easily to the more general situation of Carath\'eodory
integrands as in \cite{Acerbi1984}.%, which depend on lower order terms.
%, such as the fundamental theorem of Young
%measures, and in fact we only use Young measures in order to formulate
%concisely the possibility of concentration on the boundary. 

Due to its relation to weak lower semicontinuity and to the Direct Method, as evidenced by the
above theorem, quasiconvexity is the natural mathematical assumption
on the integrands in the classical curl-free case of the Calculus of Variations
\cite{Ball1977,Dacorogna1982,Muller1999a}. In this context, quasiaffine functions
play an important role in the study of quasiconvexity, for instance
through the notion of polyconvexity; it turns out, however, that in our more general setting there are several distinct competing notions of polyconvexity, see Section \ref{sec:nullLags}. The concept of quasiconvexity is
still poorly understood and the most important question
concerning it is whether it admits an explicit description and, in particular, whether it agrees
with rank-one convexity in $\R^{N\times 2}$ for $N\geq 2$. This last question is
known as Morrey's problem and it remains an
outstandingly difficult problem
\cite{Faraco2008,Grabovsky2018,Guerra2018,Kirchheim2016,Kirchheim2008,Muller1999b,Sverak1992a} with far-reaching
consequences in analysis \cite{Iwaniec2002}. Advances in this
direction have been made through the study of quasiaffine integrands in the more general $\mc
A$-free setup: Morrey's problem was solved---in sufficiently high
dimensions---much earlier for higher order gradients \cite{Ball1981}
than for first order gradients \cite{Sverak1992a}. Furthermore, \textsc{\v Sver\'ak}'s example has many similarities with an older example of
\textsc{Tartar} \cite{Tartar1979} of a $\Lambda_\mc A$-affine
integrand which is not $\mc A$-quasiaffine, where $\mc
A%(\xi)=\tp{diag}(\xi_1, \xi_2, \xi_1+\xi_2)
u=\left(\partial_1u_1,\,\partial_2u_2,\,(\partial_1+\partial_2)u_3\right)$ for $u\colon\R^2\rightarrow\R^3$. It is therefore
interesting to study weak continuity and lower semicontinuity in a
larger class of operators \cite{Dacorogna1982} and the constant rank
assumption is adequate in so far as all constant rank operators are
``curl-like'', in the sense that one can find a potential operator which plays the role of the gradient. 

\subsection*{Outline}
Finally let us give a brief outline of the paper. In Section \ref{sec:prelims} we gather notation as well as basic results that we will use throughout the paper. In Section \ref{sec:CR} we present a systematic treatment of constant rank operators as well as some basic facts concerning cocanceling operators. 
Section \ref{sec:lsc} is dedicated to quasiconvexity and to the lower-semicontinuity proofs while, in Section \ref{sec:nullLags}, we use these results to give both abstract and concrete characterizations of null Lagrangians. In Section \ref{sec:hardy} we study the Hardy space integrability of null Lagrangians and finally in Section \ref{sec:estimates} we prove the quantitative estimates of Theorem \ref{teo:nulllagintro}.

{\paragraph{Acknowledgements.} The authors thank Jan Kristensen for
  continuous support and many insightful comments. We also thank François Murat for a most interesting discussion around the topic of the paper, as well as the origins of its topic. We thank the anonymous
  referees who made numerous suggestions that improved the quality of the
  script.
  A.G. also thanks Federico Franceschini and Miguel M. Santos for helpful discussions.
A.G. was supported by the Engineering and Physical Sciences Research Council [EP/L015811/1]. B.R. received funding from the European Research Council (ERC) under the
European Union’s Horizon 2020 research and innovation programme under grant agreement
No 757254 (SINGULARITY).
}

%{\paragraph{Declarations of interest:} none.}

\section{Preliminaries}\label{sec:prelims}

We begin by fixing some notation that will be used throughout the paper. 
As usual, $\Omega \subseteq \R^n$ will denote an open, bounded set and, unless stated otherwise, $1<p<\infty$.
The letters $\mb U, \mb V, \mb W$ will denote finite-dimensional inner product spaces and, if $\mb U\subset \mb V$, then $\tp{Proj}_\mb U\colon \mb V \to \mb U$ denotes the orthogonal projection onto $\mb U$. The sphere in $\mb V$ is denoted by $S_\mb V$.
We write $\odot^k(\R^n, \mb U)$ for the space of all $\mb U$-valued symmetric $k$-linear maps on $\R^n$; for a $C^k$ map $u\colon \Omega\to \mb U$ we have that $\D^k u \in \odot^k(\R^n, \mb U)$. The notation $\mc M(\Omega)$ denotes the space of Radon measures in $\Omega$.

\subsection{Moore--Penrose generalized inverses}

Let $A\in \tp{Lin}(\mb V, \mb W)$.
We will use the notation $A^\dagger\equiv (A^*A)^{-1}A^*$ if $\ker A=\{0\}$, where $A^*$ denotes the adjoint (transpose) of $A$. In particular, for injective linear transformations between finite-dimensional inner product spaces, we obtain a formula for a left-inverse.
In more generality, the Moore-Penrose generalized inverse of $A$ (which we will here call simply the \textbf{pseudo-inverse}, though this terminology is not standard; algebraists use various algorithms to invert non-square matrices) is defined geometrically as the unique 
$A\in \tp{Lin}(\mb V, \mb W)$ such that 
\begin{equation}A A^\dagger = \tp{Proj}_{\tp{im\,} A} \tp{ and } A^\dagger A= \tp{Proj}_{ \tp{im\,} A^* },\label{eq:pseudoinv}\end{equation} where the projections are orthogonal, see \cite{Campbell2009}. Equivalently, a computable formula is given using the fact that the linear map $A|_{(\ker A)^\bot}\colon (\ker A)^\bot\to \tp{im\,}A$ is bijective. In this case, it is easy to check that
$$A^\dagger \equiv \begin{cases}
(A|_{\ker A^\bot})^{-1} & \tp{on im\,}A\\
0 & \tp{on (im\,}A)^\bot
\end{cases}$$
defines a matrix that indeed satisfies \eqref{eq:pseudoinv}.
We have the following useful fact, c.f.\ \cite{GuerraRaita2020}:

\begin{lema}\label{lema:CRbdd}
Let $\Omega\subset \R^n$ be open. A smooth map $A\colon\Omega\to \tp{Lin}(\mb V, \mb W)$,  $A^\dagger\colon \Omega\to \tp{Lin}(\mb W, \mb V)$ is locally bounded if and only if $\tp{rank}\,A$ is constant in $\Omega$. In that case, $\mc A^\dagger$ is also smooth.
\end{lema}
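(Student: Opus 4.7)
The plan is to combine two ingredients: the automatic lower semi-continuity of rank for continuous families of matrices, and the relation $\Vert A^\dagger(x)\Vert=\sigma_{\min,\neq 0}(A(x))^{-1}$, where $\sigma_{\min,\neq 0}$ denotes the smallest non-zero singular value. The first is immediate from the fact that $\{\tp{rank}\leq r\}$ is cut out by the vanishing of all $(r+1)\times(r+1)$ minors; the second follows from \eqref{eq:pseudoinv} together with the fact that singular values depend continuously (in fact Lipschitz, by Mirsky's inequality) on the entries of the matrix.

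For the forward direction I would argue by contradiction. Suppose $A^\dagger$ is locally bounded but $\tp{rank}\,A$ is not constant. By lower semi-continuity, there exist $x_0\in\Omega$ and a sequence $x_k\to x_0$ with $r:=\tp{rank}\,A(x_0)<\tp{rank}\,A(x_k)$ for every $k$. Then $\sigma_{r+1}(A(x_k))>0$ while $\sigma_{r+1}(A(x_0))=0$, and continuity of singular values forces $\sigma_{r+1}(A(x_k))\to 0$. Since $\sigma_{r+1}(A(x_k))$ is one of the non-zero singular values of $A(x_k)$, one concludes $\Vert A^\dagger(x_k)\Vert\geq \sigma_{r+1}(A(x_k))^{-1}\to \infty$, contradicting local boundedness near $x_0$.

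For the converse direction, assume $\tp{rank}\,A\equiv r$ and fix $x_0\in\Omega$. I would produce a smooth local factorization of $A$ as follows: choose orthonormal bases of $\mb V$ and $\mb W$ adapted respectively to the orthogonal splittings $\mb V=(\ker A(x_0))^\perp\oplus \ker A(x_0)$ and $\mb W=\tp{im}\,A(x_0)\oplus(\tp{im}\,A(x_0))^\perp$, so that in these bases
$$A(x)=\begin{pmatrix} M(x) & N(x) \\ P(x) & Q(x) \end{pmatrix},$$
with $M(x_0)$ an invertible $r\times r$ block and $N(x_0)=P(x_0)=Q(x_0)=0$. By smoothness, $M(x)$ remains invertible on some neighborhood $U$ of $x_0$; combined with the constant rank hypothesis, this forces the trailing columns of $A(x)$ to be linear combinations of the first $r$ ones, and the combination matrix must be $M(x)^{-1}N(x)$. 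Hence the Schur-type identity $Q(x)=P(x)M(x)^{-1}N(x)$ holds on $U$, yielding the full-rank factorization
$$A(x)=B(x)\,C(x),\qquad B(x):=\begin{pmatrix} M(x) \\ P(x) \end{pmatrix},\quad C(x):=\begin{pmatrix} I & M(x)^{-1}N(x) \end{pmatrix},$$
with $B$ smooth and injective and $C$ smooth and surjective on $U$.

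The proof concludes via the standard identity $(BC)^\dagger=C^\dagger B^\dagger$, which holds whenever $B$ has full column rank and $C$ has full row rank: this gives the explicit formula $A^\dagger(x)=C(x)^*(C(x)C(x)^*)^{-1}(B(x)^*B(x))^{-1}B(x)^*$ on $U$. Since $B^*B$ and $CC^*$ are smooth $r\times r$ matrix fields, invertible throughout $U$, this exhibits $A^\dagger$ as a smooth map on $U$, a fortiori locally bounded. The only step that really requires attention is the factorization: one must verify that, once $M(x)$ is invertible, the constant-rank condition is equivalent to $Q(x)=P(x)M(x)^{-1}N(x)$, so that the rank-$r$ block structure persists rather than jumping up. Once that linear-algebraic point is in place, the rest is algebraic manipulation.
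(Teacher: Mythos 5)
Your proof is correct, and it is self-contained, which is worth pointing out because the paper itself does not prove Lemma \ref{lema:CRbdd}: it is quoted from \cite{GuerraRaita2020}. The argument there for the ``only if'' part is in the same spirit as yours (at a rank-jump point the smallest non-zero singular value degenerates along a sequence, so $\Vert A^\dagger(x_k)\Vert\to\infty$), whereas smoothness under constant rank is usually extracted from an explicit algebraic formula for the pseudo-inverse --- e.g.\ Decell's formula \cite{DecellJr.1965}, which also underlies Theorem \ref{teo:b}, or the smoothness of $x\mapsto \tp{Proj}_{\ker A(x)}$ as in Lemma \ref{lema:CRgivesmultiplier}. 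Your local full-rank factorization $A=BC$, justified by the Schur-type identity $Q=PM^{-1}N$ (equivalently, $\tp{rank}\,A=r+\tp{rank}(Q-PM^{-1}N)$ once $M$ is invertible), together with $(BC)^\dagger=C^\dagger B^\dagger=C^*(CC^*)^{-1}(B^*B)^{-1}B^*$, is a clean elementary substitute for those formulas, and it correctly uses \emph{orthonormal} adapted bases, which is what makes the matrix pseudo-inverse agree with the intrinsic one. Two minor points. First, the identity $\Vert A^\dagger\Vert=\sigma_{\min,\neq 0}(A)^{-1}$ should be attributed to the explicit block formula for $A^\dagger$ (or the SVD) rather than to \eqref{eq:pseudoinv} alone: the two conditions in \eqref{eq:pseudoinv} do not determine $A^\dagger$ uniquely when $\ker A\neq\{0\}$ (they leave the action of the candidate inverse on $(\tp{im}\,A)^\perp$ unconstrained up to $\ker A$), so quoting them is not quite a justification; this is a one-line fix. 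Second, your deduction ``rank not constant $\Rightarrow$ there is a point where it jumps along a sequence'' uses that $\Omega$ is connected (or that ``constant'' is read as ``locally constant''); your argument in fact proves the correct local statement, namely unboundedness of $A^\dagger$ near any point where the rank is not locally constant, which is all that is ever used.
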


\subsection{Harmonic Analysis}

In this paper we  only use standard results from Harmonic Analysis, such as the Maximal Theorem and the H\"ormander--Mihlin multiplier theorem, which can be found for instance in the  book  \cite{Stein2016}.
Here we briefly recall some definitions for the convenience of the reader.

Fix a function $\phi \in C_c^\infty(\R^n)$ with non-zero mean and as usual let $\phi_t(x)\equiv t^{-n} \phi(x/t)$ for $t>0$. The \textbf{Hardy space} is defined as
$$\mathscr H^1(\R^n)\equiv \{f \in \mathscr S'(\R^n): \sup_{t>0} |f*\phi_t|\in L^1(\R^n)\}$$
and this definition is independent of the choice of $\phi$ \cite{Fefferman1972}. 
Other characterizations of the Hardy space are possible, for instance through the atomic decomposition. Another possibility, which is relevant for our purposes, is the following (see \cite[III.4.3]{Stein2016}):
\begin{prop}\label{prop:Riesztransfcharact}
	Let $f$ be a distribution which is restricted at infinity in the sense that, for all $r<\infty$ sufficiently large,
	$$f * \varphi \in L^r(\R^n) \hs \tp{ for all } \varphi \in \mathscr S(\R^n).$$
	Then $f\in \mathscr H^1(\R^n)$ if and only if both $f$ and $R_jf$, for
	$j=1, \dots, n$, are in $L^1(\R^n)$, where  $R_j$ is the $j$-th Riesz transform, i.e., $\widehat{R_jf}(\xi)=\xi_j/|\xi|\hat{f}(\xi)$ for $f\in\mathscr{S}(\R^n)$, $\xi\in\R^n$.
\end{prop}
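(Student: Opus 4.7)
The plan is to prove the two implications separately. For the easy direction, assume $f\in\mathscr H^1(\R^n)$ and conclude $f,R_jf\in L^1(\R^n)$. The inclusion $\mathscr H^1\hookrightarrow L^1$ is immediate (choosing $\phi$ with non-zero integral, $|f|$ is dominated almost everywhere by $\sup_{t>0}|f\ast\phi_t|\in L^1$). The only non-trivial part is the $\mathscr H^1\to L^1$ boundedness of each Riesz transform $R_j$, which I would prove through the atomic decomposition of $\mathscr H^1$: it suffices to check that $R_j$ sends each $(1,\infty)$-atom $a$ supported in a ball $B$ to a function with $\|R_j a\|_{L^1(\R^n)}$ uniformly bounded, splitting the integral over $2B$ (handled by the $L^2$ boundedness of $R_j$ and Cauchy--Schwarz) and its complement (handled by the cancellation $\int a=0$ together with the $C^1$ smoothness of the Riesz kernel off the origin).

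The converse direction is deeper. Given $f$ restricted at infinity with $f,R_1f,\dots,R_nf\in L^1(\R^n)$, I would harmonically extend to the half-space $\R^{n+1}_+$ by setting $u_0(x,t):=(P_t\ast f)(x)$ and $u_j(x,t):=(P_t\ast R_jf)(x)$, where $P_t$ is the Poisson kernel. The restriction-at-infinity hypothesis is exactly what makes these convolutions well-defined and each $u_j$ harmonic on $\R^{n+1}_+$. Using $\widehat{R_jf}(\xi)=-i\xi_j|\xi|^{-1}\hat f(\xi)$, a direct Fourier computation shows that $U:=(u_0,u_1,\dots,u_n)$ satisfies the generalized Cauchy--Riemann system of Stein--Weiss. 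The key consequence is that $|U|^q$ is subharmonic on $\R^{n+1}_+$ for every $q\geq (n-1)/n$, a classical inequality proved by combining the Cauchy--Riemann relations with a chain-rule computation.

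Choosing $q\in[(n-1)/n,1)$, I would use subharmonicity (via the mean-value inequality on half-balls centered at the boundary) to pointwise dominate $|u_0(x,t)|$ by $C\bigl(M[|U(\cdot,0)|^q](x)\bigr)^{1/q}$, where $M$ is the Hardy--Littlewood maximal function. Since $|U(\cdot,0)|^q\in L^{1/q}(\R^n)$ by hypothesis and $1/q>1$, the strong-type bound on $M$ yields $\sup_{t>0}|u_0(\cdot,t)|\in L^1(\R^n)$. This is the Poisson maximal characterization of $\mathscr H^1$; one then passes back to the $\phi$-based definition used in the excerpt by invoking the Fefferman--Stein grand maximal theorem.

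The main obstacle is precisely this last step: the equivalence of the grand maximal function (built from a single $\phi\in C^\infty_c$) with the Poisson maximal function as characterizations of $\mathscr H^1$ is the heart of Fefferman--Stein's theory, and is typically why this proposition is cited rather than proved from scratch. A secondary technical point is ensuring that all distributional manipulations are legitimate, in particular that $R_jf$ makes sense and that its Poisson extension coincides with a harmonic conjugate of $u_0$; the restriction-at-infinity hypothesis, which might look artificial at first glance, is precisely what makes these operations well-defined without assuming a priori integrability on $f$.
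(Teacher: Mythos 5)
Your outline is correct, but note that the paper does not actually prove this proposition: it is quoted as a known result with a citation to Stein's book [III.4.3], so there is no internal proof to compare against. What you have written is essentially a reconstruction of the classical argument behind that citation. The easy direction ($\mathscr H^1\subset L^1$ plus $\mathscr H^1\to L^1$ boundedness of $R_j$ via atoms) is standard and fine, though it quietly invokes the atomic decomposition, itself a substantial theorem. The converse is the Stein--Weiss/Fefferman--Stein route: Poisson extension of $(f,R_1f,\dots,R_nf)$, the generalized Cauchy--Riemann system, subharmonicity of $|U|^q$ for $q\geq (n-1)/n$, domination of the Poisson maximal function by $\bigl(M[|U(\cdot,0)|^q]\bigr)^{1/q}\in L^1$, and finally the equivalence of the Poisson maximal characterization with the single-function $\phi$-maximal definition via the grand maximal theorem. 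You correctly identify both the role of the restricted-at-infinity hypothesis (it is what makes $R_jf$ and the Poisson convolutions well defined for a mere distribution) and the fact that the last equivalence is the deep step one normally cites rather than reproves; so your proof is an honest sketch modulo these named black boxes, which is exactly the level at which the paper treats the statement. One cosmetic remark: you use the convention $\widehat{R_jf}(\xi)=-i\xi_j|\xi|^{-1}\hat f(\xi)$ while the paper writes the multiplier without the factor $-i$; this is immaterial for the characterization, since multiplying $R_jf$ by a unimodular constant does not affect membership in $L^1$.
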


We will also use repeatedly the well-known fact that functions in the Hardy space have zero mean; in fact, a bounded, compactly supported function $f$ is in $\mathscr H^1(\R^n)$ if and only if $\int_{\R^n} f(x) \d x =0$. 

Weak convergence in the Hardy space is induced from its dual, the space $\BMO(\R^n)$ of functions of \textbf{bounded mean oscillation} \cite{Fefferman1971}, defined as the space of those locally integrable functions $f\in L^1_\tp{loc}(\R^n)$ such that
$$\Vert f \Vert_\BMO \equiv \lim_{\delta \to \infty} M_\delta(f)<\infty,\hs \tp{ where }M_\delta(f)\equiv
\sup_{|B|<\delta}\fint_B \left|f-\fint_B f\right |\d x$$
and the supremum runs over balls in $\R^n$.
Here, and in the sequel, we write $\fint_E f \d x\equiv \frac{1}{|E|}\int_E f\d x.$
Moreover, $\mathscr H^1(\R^n)$ is a dual space itself: it is the dual of the space $\tp{VMO}(\R^n)$ of functions of \textbf{vanishing mean oscillation} \cite{Coifman1977,Sarason1975}; this is the space of those functions in $\BMO(\R^n)$ such that
$$\lim_{\delta \to 0} M_\delta(f)=0.$$
In particular, there is a notion of weak-$*$ convergence in $\mathscr H^1$, defined by testing against functions in $\tp{VMO}(\R^n)$. We have the following classical result \cite{Jones1994}:
\begin{teo}[Jones--Journ\'e]\label{teo:jonesjourne}
	If a sequence $f_j$ is bounded in $\mathscr H^1(\R^n)$ and it converges a.e. to $f$ then $f\in \mathscr H^1$ and in fact $f_j\wstar f$ in $\mathscr H^1$.
\end{teo}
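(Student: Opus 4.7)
The strategy is to exploit the duality $\mathscr H^1(\R^n) = \tp{VMO}(\R^n)^*$ together with Banach--Alaoglu, and then to identify weak-$*$ limits using the a.e.\ convergence to $f$. Since $\tp{VMO}(\R^n)$ is separable and $\{f_j\}$ is norm-bounded in $\mathscr H^1$, every subsequence of $\{f_j\}$ admits a further subsequence which converges weak-$*$ in $\mathscr H^1$ to some $g \in \mathscr H^1$. By the standard subsequence principle, it is enough to show that every such weak-$*$ limit $g$ coincides with $f$; this will simultaneously prove $f \in \mathscr H^1$ and convergence of the full sequence.

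To carry out the identification, fix a subsequence $f_{j_k} \wstar g$ in $\mathscr H^1$ and any $\varphi \in C_c^\infty(\R^n)$. Since $C_c^\infty(\R^n) \subset \tp{VMO}(\R^n)$, weak-$*$ convergence yields $\int f_{j_k}\varphi \d x \to \int g\,\varphi \d x$. If one can independently establish, from the a.e.\ convergence and the $\mathscr H^1$-bound, that $\int f_{j_k}\varphi \d x \to \int f\,\varphi \d x$, then matching the two limits gives $g = f$ a.e.\ on $\R^n$, and the proof is complete.

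The main obstacle is that uniform $\mathscr H^1$-bounds do \emph{not} imply equi-integrability: the family $f_j(x) = j^n \phi(jx)$ with $\phi \in \mathscr S(\R^n)$ and $\int \phi \d x = 0$ is uniformly bounded in $\mathscr H^1$, tends to $0$ almost everywhere, yet concentrates all its $L^1$-mass at the origin. Hence the naive Vitali-type truncation argument is unavailable and one must replace equi-integrability by the cancellation intrinsic to $\mathscr H^1$. The natural vehicle is the Coifman atomic decomposition $f_j = \sum_\ell \lambda_\ell^j a_\ell^j$ with $\sum_\ell|\lambda_\ell^j| \leq C\sup_j\Vert f_j\Vert_{\mathscr H^1}$, where each atom $a_\ell^j$ is supported in a ball $B_\ell^j$ of radius $r_\ell^j$, satisfies $\Vert a_\ell^j\Vert_\infty \leq |B_\ell^j|^{-1}$, and has vanishing mean. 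The mean-zero property gives the Lipschitz estimate $|\int a_\ell^j \varphi \d x| \leq \Vert \n\varphi\Vert_\infty r_\ell^j$, so that splitting the decomposition at a diameter threshold $\delta > 0$ produces a small-atom part contributing uniformly $O(\delta)$, and a large-atom part which is uniformly bounded in $L^\infty$ on bounded sets and therefore passes to the correct limit $\int f\,\varphi \d x$ by dominated convergence combined with the a.e.\ convergence $f_j \to f$. Sending $\delta\to 0$ closes the identification, and the Jones--Journ\'e conclusion follows.
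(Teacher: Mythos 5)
The paper does not prove this theorem; it is quoted from Jones--Journ\'e \cite{Jones1994}, so your proposal is measured against what a correct proof requires rather than against an argument in the text. Your soft reductions are fine: $\tp{VMO}$ is separable, so Banach--Alaoglu plus the subsequence principle correctly reduce everything to showing $\int f_{j}\varphi \d x \to \int f\varphi \d x$ for $\varphi\in C^\infty_c(\R^n)$, and the atom estimate $|\int a\varphi\d x|\leq C r\Vert\D\varphi\Vert_{L^\infty}$ does make the small-atom part of the pairing $O(\delta)$ uniformly in $j$. The gap is the last step. You apply dominated convergence to the large-atom part $h_j$ of the splitting $f_j=g_j+h_j$, but the almost everywhere convergence you possess is for $f_j$, not for $h_j$: the decomposition is $j$-dependent, and nothing forces the small-atom part $g_j$ to tend to zero pointwise (or in measure, or in $\mathscr D'$). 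The uniform bound $\Vert h_j\Vert_{L^\infty}\leq C\delta^{-n}$ only yields, along a subsequence, a weak-$*$ limit $h_\delta$ in $L^\infty_\tp{loc}$, and to conclude you would need $\int h_\delta\varphi\d x=\int f\varphi\d x+O(\delta)$, i.e.\ that the distributional limit of $f_j$ is $f$ --- which is precisely the statement being proved. So the argument is circular exactly where the real difficulty sits, namely in ruling out that the concentrating (small-scale) part shifts the weak limit away from the a.e.\ limit.

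That this is a genuine difficulty and not a presentational one is shown by the $L^1$ analogue: on $[0,1]$ take $f_j$ to be $j$ nonnegative bumps of mass $1/j$ each, centred at the points $i/j$ and of width $j^{-3}$. This sequence is bounded in $L^1$, converges to $0$ almost everywhere (Borel--Cantelli), yet converges weak-$*$ to the nonzero, absolutely continuous limit $\mathbf 1_{[0,1]}\d x$. Hence ``a.e.\ convergence $+$ norm bound $+$ no singular part in the weak limit'' cannot identify the limit; any correct proof must exploit the $\mathscr H^1$ cancellation of the concentrating part itself, beyond the soft fact that small atoms pair weakly against a fixed Lipschitz test function. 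This is what the short original argument of Jones and Journ\'e actually supplies (working with the maximal-function characterization of $\mathscr H^1$ rather than an arbitrary atomic decomposition). As written, the dominated convergence step is the missing idea, and the rest of your proposal, while correct, is the routine part.
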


Notice that if we replace $\mathscr H^1(\R^n)$ bounds by $L^1(\R^n)$
bounds then the conclusion of the theorem does not hold; in this case,
we have that
\begin{gather*}
\tp{assuming that } f_j \to f \tp{ a.e.,}\\
  f_j \w f \tp{ in } L^1(\R^n) \hs \iff \hs (f_j) \tp{ is
    equi-integrable}.
\end{gather*}
The difference between $\mathscr H^1$ and $L^1$ convergence will be used crucially in  Lemma \ref{lema:Afreefieldshardy} below.

\section{Constant rank linear operators}\label{sec:CR}

Let us consider a collection of linear operators $A_\a\in \tp{Lin}(\mb
V, \mathbb W)$ for each $n$-multi-index $\a$. We define a homogeneous $l$-th order linear operator $\mc A$ by 
\begin{equation}
\label{eq:defa}\mc A v = \sum_{|\a|=l} A_\a \p^\a v, \hs v\colon \Omega\subseteq
\R^n \to \mb V.\end{equation}
We think of $\mc A$ as a polynomial in $\p$ and so we write
$$\mc{A}\colon \R^n \to \tp{Lin}(\mb V, \mathbb W), \hs \mc{A}(\xi)=\sum_{|\a|=l} A_\a \xi^\a.$$
Associated with $\mc A$ we have a set of directions and frequencies, introduced by \textsc{Murat} and \textsc{Tartar} \cite{Murat1978,Tartar1979},
$$V_{\mc A} \equiv \left\{(\lambda, \xi)\in \mb V\times \R^n\exc\{0\}:  \mc A(\xi)\lambda=0\right\}$$
and its projection onto $\mb V$ is the wave cone associated to $\mc A$ which we denote by
$$\Lambda_{\mc A} \equiv \bigcup_{\xi \in 
	\mb S^{n-1}} \ker \mc{A}(\xi).$$ 
We will sometimes drop the subscript $\mc A$ in the above notation.

We say that the operator
$\mc A$ has \textbf{constant rank}
if there is a number $r\in \N$ such that
$$\tp{rank}\, \mc A(\xi)=r \hs \tp{for all } \xi \in \mb S^{n-1}.$$
A geometric interpretation of this property is that $V_\mc A$ is a smooth vector bundle over $\mb S^{n-1}$ with fiber $ \ker \mc A(\xi)$ at the point $\xi$. A more analytic interpretation, c.f.\ Lemma \ref{lema:CRbdd} and \cite{Kato1975,Murat1981,Schulenberger1971}, is the following:

\begin{lema}\label{lema:CRgivesmultiplier}
	The operator $\mc A$ has constant rank if and only if the map
$\xi \mapsto \tp{Proj}_{\ker \mc A(\xi)}$, defined for $\xi\in \R^n\exc\{0\}$,  is bounded. In this case, the map is smooth away from zero.
\end{lema}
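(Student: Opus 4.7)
The plan is to reduce both implications to Lemma \ref{lema:CRbdd} via the pointwise identity
$$\tp{Proj}_{\ker \mc A(\xi)} = I_{\mb V} - \mc A(\xi)^\dagger \mc A(\xi), \qquad \xi \in \R^n \setminus \{0\},$$
which follows at once from \eqref{eq:pseudoinv}, since $\mc A(\xi)^\dagger \mc A(\xi) = \tp{Proj}_{\tp{im}\, \mc A(\xi)^*} = \tp{Proj}_{(\ker \mc A(\xi))^\perp}$. This transfers any regularity or boundedness statement for the projection to the corresponding statement for $\mc A^\dagger$, where Lemma \ref{lema:CRbdd} applies directly.

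For the forward implication, I would first upgrade the constant rank hypothesis from $S^{n-1}$ to all of $\R^n \setminus \{0\}$: this is immediate because $\mc A$ is $l$-homogeneous, so $\tp{rank}\, \mc A(t \xi) = \tp{rank}\, \mc A(\xi)$ for every $t \neq 0$. Lemma \ref{lema:CRbdd}, applied to the smooth map $\mc A\colon \R^n \setminus \{0\} \to \tp{Lin}(\mb V, \mb W)$, then yields that $\mc A^\dagger$ is smooth on $\R^n \setminus \{0\}$, and so is the projection by the displayed identity. Since $\mc A^\dagger$ is $(-l)$-homogeneous while $\mc A$ is $l$-homogeneous, the product $\mc A^\dagger \mc A$, and hence the projection, is $0$-homogeneous; it is thus determined by its restriction to the compact sphere $S^{n-1}$, where it is bounded by continuity.

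For the converse, the key observation is the trace identity
$$\tp{tr}\,\tp{Proj}_{\ker \mc A(\xi)} = \dim \ker \mc A(\xi) = \dim \mb V - \tp{rank}\, \mc A(\xi).$$
Since orthogonal projections always satisfy $\|\tp{Proj}\| \leq 1$ in operator norm, the word ``bounded'' is meaningful only when read together with the concluding sentence about smoothness: namely, that the projection extends as a continuous (equivalently, smooth) function on $\R^n \setminus \{0\}$. Granted continuity on $S^{n-1}$, the trace is a continuous $\Z$-valued function on the connected manifold $S^{n-1}$, hence constant, and so is $\tp{rank}\, \mc A$. The main subtlety in writing this up is precisely pinning down the right interpretation of ``bounded'' in the converse direction, since a naive pointwise operator-norm bound is vacuous; the forward direction, by contrast, is a routine application of Lemma \ref{lema:CRbdd} once the homogeneity bookkeeping is carried out.
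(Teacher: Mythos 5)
Your proposal is correct and follows exactly the route the paper intends: the lemma is stated without proof, with a pointer to Lemma \ref{lema:CRbdd}, and your identity $\tp{Proj}_{\ker \mc A(\xi)}=I_{\mb V}-\mc A(\xi)^\dagger\mc A(\xi)$ (valid by \eqref{eq:pseudoinv} since $\tp{im}\,\mc A(\xi)^*=(\ker\mc A(\xi))^\perp$), together with the homogeneity bookkeeping, is precisely the intended reduction to that lemma. Your interpretive point about the word ``bounded'' is also well taken: an orthogonal projection always has operator norm at most $1$, so under the literal reading the converse implication fails --- for instance $\mc A v=\partial_1 v$ for scalar $v$ on $\R^2$ does not have constant rank, yet $\xi\mapsto\tp{Proj}_{\ker\mc A(\xi)}$ is pointwise bounded by $1$ (it is merely discontinuous on the line $\xi_1=0$). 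The hypothesis must therefore be read as continuity (equivalently, by the same reduction, smoothness) of the projection field, which is also the property the paper actually uses afterwards, when $\tp{Proj}_{\ker\mc A(\xi)}$ is fed into the H\"ormander--Mihlin theorem for Theorem \ref{teo:Lpest}; with that reading your trace argument settles the converse. The only point left implicit is that connectedness of $S^{n-1}$ requires $n\geq 2$; for $n=1$ homogeneity gives $\ker\mc A(\xi)=\ker\mc A(1)$ for all $\xi\neq 0$, so constant rank is automatic and nothing is lost.
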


Lemma \ref{lema:CRgivesmultiplier} can be used to prove a more refined characterization of constant rank operators. For $\varphi \in C^\infty_c(\R^n,\mb V)$, we write
$\widehat{P_\mc A \varphi}(\xi) \equiv \tp{Proj}_{\ker \mc A(\xi)} \widehat\varphi(\xi).$
In \cite{GuerraRaita2020}, the authors proved:

\begin{teo}\label{teo:Lpest}
Fix $1<p<\infty$. An operator $\mc A$ as in \eqref{eq:defa} has constant rank if and only if
$$\Vert \D^k(\varphi - P_{\mc A} \varphi) \Vert_{L^p(\R^n)}
\leq C_p \Vert \mc A \varphi \Vert_{L^p(\R^n)} \qquad
\tp{ for all } \varphi \in C^\infty_c(\R^n, \mb V).$$
\end{teo}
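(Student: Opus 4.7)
The plan is to prove both directions from the Fourier-side identity
$$\widehat{\D^l(\varphi - P_{\mc A}\varphi)}(\xi) = m(\xi)\,\widehat{\mc A\varphi}(\xi), \qquad m(\xi) := \xi^{\otimes l}\otimes \mc A(\xi)^\dagger,$$
which one obtains by combining the definition of $P_\mc A$, the identity $\widehat{\varphi - P_\mc A \varphi}(\xi) = \tp{Proj}_{(\ker \mc A(\xi))^\perp}\widehat\varphi(\xi) = \mc A(\xi)^\dagger \mc A(\xi)\widehat\varphi(\xi)$ coming from \eqref{eq:pseudoinv}, and the standard formula $\widehat{\mc A \varphi}(\xi) = i^l \mc A(\xi)\widehat\varphi(\xi)$. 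Note that $m$ is $0$-homogeneous.

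For sufficiency, assume $\mc A$ has constant rank. By homogeneity, $\xi \mapsto \mc A(\xi)$ then has constant rank on all of $\R^n \setminus \{0\}$, so Lemma \ref{lema:CRbdd} gives that $\xi \mapsto \mc A(\xi)^\dagger$ is smooth there. Hence $m \in C^\infty(\R^n \setminus \{0\})$ is $0$-homogeneous, and the H\"ormander--Mihlin multiplier theorem yields $\|T_m f\|_p \leq C_p \|f\|_p$ for every $p \in (1,\infty)$; applied to $f = \mc A \varphi$, this is the required estimate.

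For necessity, I argue by contrapositive. If $\mc A$ fails to have constant rank, then by lower-semicontinuity of rank together with Lemma \ref{lema:CRbdd} there exist $\xi_0 \in S^{n-1}$ and $\xi_j \to \xi_0$ with $\tp{rank}\,\mc A(\xi_j) > \tp{rank}\,\mc A(\xi_0)$, hence $\dim \ker \mc A(\xi_0) > \limsup_j \dim \ker \mc A(\xi_j)$. After extracting a subsequence on which $\ker \mc A(\xi_j)$ converges in the Grassmannian, one selects a unit vector $v_0 \in \ker \mc A(\xi_0)$ orthogonal to this limit subspace; then $\tp{Proj}_{(\ker \mc A(\xi_j))^\perp} v_0 \to v_0$ while $\mc A(\xi_j) v_0 \to \mc A(\xi_0) v_0 = 0$ by continuity. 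Choosing a bump $\rho \in C_c^\infty(B_1)$ and setting $\widehat{\varphi_j}(\xi) := \rho((\xi - \xi_j)/\delta_j) v_0$ with $\delta_j \downarrow 0$ slowly enough that $\mc A(\xi) v_0$ and $\tp{Proj}_{(\ker\mc A(\xi))^\perp} v_0$ are essentially constant on $\tp{supp}\,\widehat{\varphi_j}$, a direct computation yields $\|\mc A\varphi_j\|_p \lesssim |\mc A(\xi_j) v_0| \cdot \delta_j^{\,n-n/p}\|\rho^\vee\|_p \to 0$ (modulo a common scaling factor), while $\|\D^l(\varphi_j - P_\mc A \varphi_j)\|_p \gtrsim |\xi_j|^l \cdot \delta_j^{\,n-n/p}\|\rho^\vee\|_p$ remains bounded below; this contradicts the assumed estimate.

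The main obstacle is the execution of the concentration step: one has to tune $\delta_j$ so that the symbols $\mc A(\xi)$ and $\tp{Proj}_{\ker\mc A(\xi)}$ behave like constants on the support of $\widehat{\varphi_j}$, while simultaneously keeping the $L^p$ norms of $\mc A\varphi_j$ and $\D^l(\varphi_j-P_\mc A\varphi_j)$ cleanly comparable. A cleaner alternative, which short-circuits the explicit construction, is to observe that the assumed estimate promotes $m$ to a bounded $L^p$-Fourier multiplier on the full space $L^p(\R^n,\mb W)$ (since $\widehat{\mc A\varphi}(\xi)\in\tp{im}\,\mc A(\xi)$ is precisely where $m$ is nontrivial); bounded $L^p$-multipliers lie in $L^\infty$, so the formula for $m$ forces $\mc A(\xi)^\dagger$ to be locally bounded on $\R^n\setminus\{0\}$, and Lemma \ref{lema:CRbdd} then gives constant rank.
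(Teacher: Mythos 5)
Your sufficiency direction is correct and is the standard multiplier argument: under constant rank, Lemma \ref{lema:CRbdd} makes $\xi\mapsto\mc A(\xi)^\dagger$ smooth away from the origin, the symbol $m(\xi)=\xi^{\otimes l}\otimes\mc A(\xi)^\dagger$ is $0$-homogeneous (read the exponent in the displayed estimate as the order $l$ of $\mc A$, as you implicitly do), and H\"ormander--Mihlin gives the estimate. Note that the present paper does not prove this theorem at all: it quotes it from \cite{GuerraRaita2020}, and the genuinely new content there is the necessity direction --- which is exactly where your proposal is incomplete.

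Your ``cleaner alternative'' for necessity is circular. The hypothesis only bounds $T_m$ on the subspace $\{\mc A\varphi:\varphi\in C^\infty_c(\R^n,\mb V)\}\subset L^p(\R^n,\mb W)$, not on all of $L^p(\R^n,\mb W)$. To promote it to a genuine multiplier bound you would have to decompose an arbitrary $f\in L^p(\R^n,\mb W)$ into the part whose Fourier transform lies pointwise in $\tp{im}\,\mc A(\xi)$ plus the rest, i.e.\ apply the Fourier multiplier with symbol $\tp{Proj}_{\tp{im}\,\mc A(\xi)}$; the $L^p$-boundedness of that projection multiplier is precisely the kind of statement that fails without constant rank, so the observation that $m$ annihilates $(\tp{im}\,\mc A(\xi))^\perp$ does not buy you anything --- the step begs the question. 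The concentration argument can be made rigorous, but as written it has two real gaps. First, your $\varphi_j$ has compactly supported Fourier transform, hence is Schwartz but not in $C^\infty_c$; you must first transfer the assumed estimate to such functions (truncate in physical space, note $\mc A(\chi_R\varphi_j)\to\mc A\varphi_j$ in $L^p$, use $L^2$-boundedness of $P_{\mc A}$ and weak lower semicontinuity of the $L^p$-norm). Second, and more seriously, arranging that $\tp{Proj}_{(\ker\mc A(\xi))^\perp}$ is ``essentially constant'' on $\tp{supp}\,\widehat{\varphi_j}$ requires this symbol to be continuous near $\xi_j$, i.e.\ the rank to be locally constant there; your $\xi_j$ are only required to have rank larger than at $\xi_0$, and near such points the kernel projection may still be discontinuous at frequencies arbitrarily close to $\xi_j$, so no choice of $\delta_j$ achieves the claim. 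The standard fix is to take $\xi_0$ on the boundary of the open set $U=\{\xi:\tp{rank}\,\mc A(\xi)=r_{\max}\}$ and $\xi_j\in U$, where the projection is smooth; it is then also cleaner to obtain the lower bound on $\Vert\D^{l}(\varphi_j-P_{\mc A}\varphi_j)\Vert_{L^p}$ by pairing with a dual bump concentrated at the same frequency (the pairing produces the nonnegative quantity $|\tp{Proj}_{(\ker\mc A(\xi))^\perp}v_0|^2$) rather than asserting the $L^p$ lower bound directly, since for $p<2$ the error term is not controlled by Bernstein-type inequalities. As it stands, the necessity half is not proved.
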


At the endpoint $p=1$ and $p=\infty$ the above result should be contrasted with \textsc{Ornstein}'s non-inequality, see \cite{deLeeuw1962,Faraco2020,Kirchheim2016, Ornstein1962}. Nonetheless, at these endpoints there are weaker inequalities that one can use, see \cite{Raita2018a} for $p=1$, building on \cite{VanSchaftingen2011}.
The constant rank condition also admits a functional-analytic interpretation, see the corollary in \cite{GuerraRaita2020}.

Another characterization of constant rank operators was given by the second author in  \cite{Raita2018}. This characterization will be particularly useful for our purposes and the proof is based on a result of \textsc{Decell} \cite{DecellJr.1965}.

\begin{teo}\label{teo:b}
An operator $\mc A$ as in \eqref{eq:defa} if and only if there is a linear homogeneous differential operator $\mc B$
	with constant coefficients such that
	\begin{equation}
	\label{eq:exact}	\tp{im}\, \mc B(\xi)=\tp{ker\,} \mc A(\xi )\hs \tp{for all } \xi \in \R^n\exc \{0\}.\end{equation}
Moreover, $\mc B$ has constant rank as well.
\end{teo}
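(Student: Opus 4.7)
The plan is to address the two implications via the identity $\tp{rank}\,\mc A(\xi) + \tp{rank}\,\mc B(\xi) = \dim\mb V$ that rank-nullity derives from \eqref{eq:exact}. For the backward direction, assume $\mc B$ with \eqref{eq:exact} exists. Both $\mc A(\xi)$ and $\mc B(\xi)$ are polynomial in $\xi$, so their ranks are lower semicontinuous and attain their maxima $r_\mc A$, $r_\mc B$ on a dense open subset of $\R^n\exc\{0\}$. Since the identity forces $r_\mc A + r_\mc B = \dim\mb V$ on that subset and the pointwise bound $\tp{rank}\,\mc A(\xi) + \tp{rank}\,\mc B(\xi) \leq r_\mc A + r_\mc B$ holds everywhere, both ranks must be constant on $\R^n\exc\{0\}$. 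This yields both the backward direction and the ``moreover'' clause at once.

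For the forward direction, I plan to invoke Decell's algorithmic construction, based on Cayley--Hamilton applied to $M(\xi):=\mc A(\xi)^*\mc A(\xi)$. Assuming $\mc A$ has constant rank $r$, this $M$ is polynomial, symmetric, positive semi-definite, homogeneous of degree $2l$ and of constant rank $r$. Expand its characteristic polynomial
$$\det(\lambda I - M(\xi))=\lambda^N-c_1(\xi)\lambda^{N-1}+c_2(\xi)\lambda^{N-2}-\cdots+(-1)^r c_r(\xi)\lambda^{N-r},\qquad N=\dim\mb V,$$
noting that each $c_k(\xi)$ is polynomial of degree $2lk$ (as a sum of $k\times k$ principal minors of $M(\xi)$). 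The constant-rank hypothesis forces $c_k\equiv 0$ for $k>r$, while $c_r(\xi)\neq 0$ for every $\xi\neq 0$, since $c_r(\xi)$ is the product of the (exactly $r$) nonzero eigenvalues of $M(\xi)$. I then define the polynomial symbol
$$\mc B(\xi):=M(\xi)^r - c_1(\xi)M(\xi)^{r-1}+c_2(\xi)M(\xi)^{r-2}-\cdots+(-1)^r c_r(\xi)I,$$
which is homogeneous of degree $2lr$ and corresponds to a constant-coefficient homogeneous linear differential operator of order $2lr$.

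To verify \eqref{eq:exact} for this $\mc B$, I reason spectrally at each $\xi\neq 0$: the scalar polynomial $q_\xi(\lambda):=\lambda^r-c_1(\xi)\lambda^{r-1}+\cdots+(-1)^r c_r(\xi)$ vanishes on every nonzero eigenvalue of $M(\xi)$ (since $\det(\lambda I - M(\xi)) = \lambda^{N-r} q_\xi(\lambda)$) but $q_\xi(0)=(-1)^r c_r(\xi)\neq 0$, so $\mc B(\xi)=q_\xi(M(\xi))$ acts as zero on $\tp{im}\,M(\xi)=(\ker M(\xi))^\perp$ and as the nonzero scalar $(-1)^r c_r(\xi)$ on $\ker M(\xi)$. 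Since symmetry of $M$ gives $\ker M(\xi)=\ker\mc A(\xi)$, this forces $\tp{im}\,\mc B(\xi)=\ker\mc A(\xi)$ for every $\xi\neq 0$. I expect the main obstacle to be establishing the global (rather than merely generic) non-vanishing of $c_r(\xi)$ on $\R^n\exc\{0\}$: this is exactly what the constant-rank hypothesis on $\mc A$ buys, since otherwise the spectral decomposition degenerates at singular frequencies and the image of $\mc B$ would fail to recover the full kernel of $\mc A$ there.
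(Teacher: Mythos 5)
Your proposal is correct and follows essentially the same route as the paper, which does not prove Theorem \ref{teo:b} in the text but cites the second author's construction in [Ra\u{\i}\cb{t}\u{a} 2018] based on Decell's Cayley--Hamilton formula: your symbol $q_\xi(M(\xi))$ with $M(\xi)=\mc A(\xi)^*\mc A(\xi)$ is exactly $(-1)^r c_r(\xi)\,\tp{Proj}_{\ker \mc A(\xi)}$, i.e.\ the polynomial multiple of the kernel projection obtained there via the pseudo-inverse, and your rank-counting argument for the converse and the ``moreover'' clause is the standard one. No gaps.
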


We will write, for some $B_\a \in \tp{Lin}(\mb U, \mb V)$, 
\begin{equation}
\label{eq:defb}
\mc B u = \sum_{|\a|=k} B_\a\p^\a u, \hs u \colon \Omega \subseteq
\R^n\to \mb U;
\end{equation}
equivalently, there is $T\in \tp{Lin}(\odot^k(\R^n, \mb U),\mb V)$ such that we can write in jet notation
\begin{equation}
\mc B = T \circ \D^k.\label{eq:C}
\end{equation}
We would like to emphasize that the construction of $\mc B$ given in \cite{Raita2018} is computable and that in fact one can always take $\mb U=\mb V$. We will refer to the potential operator $\mc B$ simply as the \textbf{potential} and to $\mc A$ as the \textbf{annihilator}, although this terminology is not standard.

From now onwards we shall assume implicitly that (\ref{eq:assumption}) holds and, for the sake of concreteness, we give a few examples when this is the case.

\begin{ex}\label{ex:intro}
	\begin{enumerate}
		\item Unconstrainted fields: if $\mc A=0$ then
                  $\Lambda_\mc A=\mb V$ and $\mc A$-quasiconvexity is
                  just ordinary convexity.
                \item \label{it:curlfree} Irrotational fields: let
                  $v\colon \R^n\to \R^n$ be a vector field and let
                  $\mc A=\tp{curl}$, where
$(\tp{curl\,}v)_{i,j}=\partial_i v_j-\partial_j v_i,\, i,j=1,
\dots, n.$
                  It is standard that $\mc A$-free vector fields have
                  a potential over simply connected domains, i.e.\
                  they can be written as the gradient of some other
                  function. One can also consider other variants
                  $\widetilde {\mc A}$ of the curl, for instance by
                  applying the curl row-wise to $m\times n$ matrices,
                  or more generally to higher order tensors, so that $\widetilde{\mc A}$-free fields correspond to $k$-th order gradients; see \cite{Ball1981} or \cite{Fonseca1999} for details.
		\item\label{it:divfree} Solenoidal fields: the constraint $\mc A=\tp{div\,}$ appears, for instance, in Fluid Dynamics, where the velocity field of an incompressible fluid is divergence-free.
		
		\item Examples \ref{it:curlfree}, \ref{it:divfree} fall in the  framework of exterior derivatives of differential forms \cite{Robbin1987}. 
		
		\item\label{it:linelast} Linear elasticity: in this case one studies integrands which depend only on  the symmetric gradient $\mc E(u)\equiv \frac 1 2(\D u+(\D u)^T)$ of the displacement $u\colon \Omega\subset \R^n\to \R^n$. A sufficiently regular vector field $v\colon \Omega \to \R^{n\times n}_\tp{sym}$ is a symmetric gradient if and only if it is ($\tp{curl curl}$)-free, where 
		$$(\tp{curl\,curl\,} v)_{i,j,k,l}\equiv \p^2_{kl} v_{ij} + \p^2_{ij} v_{kl}- \p^2_{jk} v_{il} - \p^2_{il} v_{jk}$$    
		is the Saint--Venant compatibility operator.
		\item Coupling of constraints: by combining several 		admissible constraints one obtains a new operator. For instance, by coupling \ref{it:divfree} and \ref{it:curlfree}  we have the equations of Electrostatics:
		$$\tp{div}\,B=0, \hs \tp{curl}\, E=0.$$
		If furthermore we couple these equations with \ref{it:linelast} we have the system of piezoelectricity. See \cite{Milton2002} for more examples. 
	\end{enumerate}
\end{ex}

Two important examples where the constant rank assumption does not hold are the operator associated to separate convexity \cite{Muller1999b,Tartar1993}, $\mc A v=(\partial_iv_j)_{i\neq j}$ acting on $v\colon \R^n\rightarrow \R^n$, and the operator associated to the incompressible Euler equations \cite{DeLellis2009,DeLellis2010,Szekelyhidi2012}.

\subsection{Cocanceling operators}

In order to discuss further properties of constant rank operators it will be convenient to employ simple algebraic properties of cocanceling operators, which for the reader's convenience we prove in this section.

\begin{ndef}\label{def:cocan}
	The operator $\mc B$ is said to be \textbf{cocanceling} if $\mb I_\mc B\equiv \bigcap_{\xi \in \mb S^{n-1}} \ker \mc B(\xi)=\{0\}$.
\end{ndef}  

This notion was introduced by \textsc{Van Schaftingen} in \cite{VanSchaftingen2011} and is equivalent to a critical linear $L^1$-estimate for $\mc B$-free fields. 
Typical examples of cocanceling operators are the divergence, the exterior derivative and the Saint--Venant compatibility operator, c.f.\ Example \ref{ex:intro}\ref{it:linelast}.

We recall a fundamental characterization of cocanceling operators \cite[Prop.~2.1]{VanSchaftingen2011}:
\begin{lema}\label{lema:char_cocanc}
	The following are equivalent:
	\begin{enumerate}
		\item $\mc A$ is cocanceling;
		\item $\int v=0$ for all $v \in C^\infty_c(\Omega, \mb V)$ such that $\mc A v =0$;
		\item If $v_0\in \mb V$ such that $\mc A\left(\delta_0v_0\right)=0$, then $v_0=0$.
	\end{enumerate}
\end{lema}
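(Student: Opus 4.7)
The plan is to establish (a)~$\iff$~(c) via a one-line Fourier identity and then close the loop with two short test-function arguments. None of the three steps should offer serious resistance; the only mildly substantive idea appears in (a)~$\Rightarrow$~(b), namely a mass-preserving rescaling.

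I would begin with \textbf{(a)~$\iff$~(c)}. Since $\widehat{\delta_0 v_0} = v_0$ is constant and $\mc A$ is a homogeneous constant-coefficient differential operator of order $l \geq 1$, a direct computation yields $\widehat{\mc A(\delta_0 v_0)}(\xi) = i^l \mc A(\xi) v_0$. Hence $\mc A(\delta_0 v_0) = 0$ if and only if $\mc A(\xi) v_0 = 0$ for all $\xi \in \R^n$; by homogeneity of $\mc A(\xi)$ in $\xi$ this is the same as $v_0 \in \bigcap_{\xi \in \mb S^{n-1}} \ker \mc A(\xi) = \mb I_\mc A$, so (c) literally says $\mb I_\mc A = \{0\}$.

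For \textbf{(b)~$\Rightarrow$~(a)} I would argue dually: if $v_0 \in \mb I_\mc A$ then the Fourier identity above forces $\sum_{|\alpha|=l} \xi^\alpha A_\alpha v_0 = 0$ for every $\xi$, and linear independence of monomials gives $A_\alpha v_0 = 0$ for each multi-index $\alpha$ with $|\alpha| = l$. Therefore, for any $\varphi \in C^\infty_c(\Omega)$ the field $\varphi v_0$ is $\mc A$-free, since $\mc A(\varphi v_0) = \sum_{|\alpha|=l}(\partial^\alpha \varphi)\, A_\alpha v_0 = 0$, and (b) then forces $\bigl(\int \varphi \d x\bigr) v_0 = 0$; taking $\varphi$ with nonzero mean gives $v_0 = 0$.

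Finally, for \textbf{(a)~$\Rightarrow$~(b)} I would use a concentration argument. Given $v \in C^\infty_c(\Omega, \mb V)$ with $\mc A v = 0$, extend by zero to $\R^n$ and consider the $L^1$-preserving rescalings $v_\lambda(x) \equiv \lambda^n v(\lambda x)$. A direct calculation shows that $\mc A v_\lambda = \lambda^{n+l}(\mc A v)(\lambda\,\cdot\,) = 0$, while $v_\lambda \to \bigl(\int v \d x\bigr)\delta_0$ in $\mathscr D'(\R^n, \mb V)$ as $\lambda \to \infty$. Continuity of $\mc A$ on $\mathscr D'$ then gives $\mc A\bigl((\int v \d x)\,\delta_0\bigr) = 0$, and the already-established (c) forces $\int v \d x = 0$. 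The only delicate point is choosing the scaling exponent so that both the mean is preserved and $\mc A v_\lambda$ still vanishes; the $L^1$-scaling above achieves both simultaneously because $\mc A$ is homogeneous of pure order $l$ in derivatives.
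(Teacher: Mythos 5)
Your proof is correct. There is nothing internal to compare it against: the paper does not prove this lemma, it simply recalls it from Van Schaftingen's work (his Proposition 2.1), so any complete argument is welcome. All three of your steps hold up. The Fourier computation $\widehat{\mc A(\delta_0 v_0)}(\xi)=i^l\mc A(\xi)v_0$, combined with the $l$-homogeneity of $\xi\mapsto\mc A(\xi)$, shows that (c) is literally the statement $\mb I_{\mc A}=\{0\}$, i.e. (a). For (b) $\Rightarrow$ (a), the identification $\mb I_{\mc A}=\bigcap_{|\alpha|=l}\ker A_\alpha$ (linear independence of the monomials $\xi^\alpha$ — the same observation the paper makes in the proof of Lemma \ref{lema:cocan}) makes $\varphi v_0$ an admissible $\mc A$-free test field for any scalar $\varphi\in C^\infty_c(\Omega)$, and testing with $\varphi$ of nonzero mean kills $v_0$. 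For (a) $\Rightarrow$ (b), the mass-preserving rescaling $v_\lambda=\lambda^n v(\lambda\,\cdot\,)$ of the zero-extension stays $\mc A$-free, converges to $\bigl(\int v\d x\bigr)\delta_0$ in $\mathscr D'(\R^n,\mb V)$, and sequential continuity of $\mc A$ on distributions plus the already-proved (c) forces $\int v\d x=0$; the passage from $\Omega$ to $\R^n$ by zero-extension is legitimate since $v$ is smooth and compactly supported in $\Omega$. For comparison, Van Schaftingen's original argument runs (a) $\Rightarrow$ (b) on the Fourier side: $\mc A(\xi)\widehat v(\xi)=0$ for all $\xi$, and since $\ker\mc A(t\xi)=\ker\mc A(\xi)$ for $t\neq 0$, continuity of $\widehat v$ at the origin gives $\widehat v(0)\in\bigcap_{\xi\neq 0}\ker\mc A(\xi)=\{0\}$, i.e. $\int v\d x=0$. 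Your concentration argument is the physical-space counterpart of this; both are equally elementary, and yours has the mild structural advantage of routing everything through the Dirac-mass criterion (c), so the three equivalences come out of a single mechanism.
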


For our purposes, the relevance of cocancellation stems from the following simple result:

\begin{lema}\label{lema:cocan}
	Let $\mc B$ be as in (\ref{eq:defb}) and let $\mb J$ be a subspace which is such that $\mb U=\mb I_\mc B \oplus \mb J$. Then there is a choice of coordinates of $\mb U$ such that $\mc B$ can be represented as a block matrix
	$$\mc B =\begin{bmatrix}
	0_{\tp{Lin}(\mb I_\mc B, \mb V)} & \widetilde{\mc B}
	\end{bmatrix}$$
	where $\widetilde {\mc B}(\xi)\colon \mb J \to \mb V$ is cocanceling.
\end{lema}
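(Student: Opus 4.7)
The plan is essentially direct: decompose $\mb U$ along $\mb I_\mc B \oplus \mb J$ and observe that both the block form and the cocancellation of $\widetilde{\mc B}$ follow immediately from the definitions, without any analytic input.

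First, I would pick a basis for $\mb U$ by concatenating a basis of $\mb I_\mc B$ with a basis of $\mb J$. In these adapted coordinates, every $u \in \mb U$ splits uniquely as $u = u_1 + u_2$ with $u_1 \in \mb I_\mc B$ and $u_2 \in \mb J$. By the very definition $\mb I_\mc B = \bigcap_{\xi \in \mb S^{n-1}} \ker \mc B(\xi)$, together with homogeneity of $\mc B(\xi)$ in $\xi$, one has $\mc B(\xi) u_1 = 0$ for every $\xi \in \R^n$ and every $u_1 \in \mb I_\mc B$. Setting $\widetilde{\mc B}(\xi) \equiv \mc B(\xi)|_\mb J \in \tp{Lin}(\mb J, \mb V)$, it follows that
$$\mc B(\xi) u = \widetilde{\mc B}(\xi) u_2 \hs \tp{for all } \xi \in \R^n,$$
which is precisely the block matrix representation $\bigl[\,0\ \ \widetilde{\mc B}\,\bigr]$ claimed in the statement.

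To finish I would verify that $\widetilde{\mc B}$ is cocanceling. Suppose $u_2 \in \mb J$ lies in $\bigcap_{\xi \in \mb S^{n-1}} \ker \widetilde{\mc B}(\xi)$. Then $\mc B(\xi) u_2 = \widetilde{\mc B}(\xi) u_2 = 0$ for all such $\xi$, so $u_2 \in \mb I_\mc B$. The direct-sum hypothesis $\mb J \cap \mb I_\mc B = \{0\}$ forces $u_2 = 0$, which by Definition \ref{def:cocan} is exactly the cocancellation of $\widetilde{\mc B}$.

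The entire argument is algebraic, so there is no substantial obstacle; the content of the lemma is simply that the universally invisible directions $\mb I_\mc B$ can be split off to leave a cocanceling remainder. The only minor point of care is that the coordinate choice on $\mb U$ must be adapted to the decomposition $\mb U = \mb I_\mc B \oplus \mb J$, but no change of basis on the target space $\mb V$ is required.
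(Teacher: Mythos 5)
Your proof is correct and follows essentially the same route as the paper: choose coordinates adapted to $\mb U=\mb I_{\mc B}\oplus\mb J$, observe that $\mc B(\xi)$ kills $\mb I_{\mc B}$, and conclude that the restriction $\widetilde{\mc B}(\xi)=\mc B(\xi)|_{\mb J}$ is cocanceling because $\mb I_{\widetilde{\mc B}}\subseteq\mb I_{\mc B}\cap\mb J=\{0\}$. The only (harmless) difference is that you argue directly at the level of the symbol, using homogeneity to pass from $\mb S^{n-1}$ to all of $\R^n$, whereas the paper first identifies $\mb I_{\mc B}=\bigcap_{|\alpha|=k}\ker B_\alpha$ by comparing polynomial coefficients and then puts the coefficient matrices $B_\alpha$ in block form.
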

An immediate consequence of Lemma \ref{lema:cocan} is that the space of 
$\mc B$-free fields contains $C^\infty_c(\R^n,\mb I_{\mc B})$. This space
is trivial if and only if $\mc B$ is cocanceling.
\begin{proof}
	The proof relies on \cite[Proposition~2.5]{VanSchaftingen2011}. Using the notation in \eqref{eq:defb}, we first claim that
	$$
	\mb I_{\mc B}=\bigcap_{|\alpha|=k}\ker B_\alpha.
	$$
	On one hand, if $B_\alpha v_0=0$ for all $\alpha$, then $\mc B(\xi)v_0=0$ for all $\xi\in\R^n$, so that $v_0\in\mb I_{\mc B}$. On the other hand, if $\sum_{|\alpha|=k}\xi^\alpha B_\alpha v_0=0$ for all $\xi\in\R^n$, by identifying coefficients, we obtain that $B_\alpha v_0=0$ for all $\alpha$.
	
	We  choose a basis of $\mb U$ such that the matrices $B_\alpha$ can be written as $B_\alpha=[0_{\tp{Lin}(\mb I_{\mc B},\mb V)}\,\,\widetilde B_\alpha]$ and define $\tilde{\mc B}(\xi)=\sum_{|\alpha|=k}\xi^\alpha \widetilde B_\alpha$. It is then clear that $\bigcap_{|\alpha|=k}\ker \widetilde B_\alpha=\{0\}$, which implies that $\widetilde {\mc B}$ is cocanceling.
\end{proof}
These results suggest that one can reduce statements about
non-{co}canceling operators to statements about cocanceling
operators, as often Lemma~\ref{lema:cocan} can be used to perform reductions. As a side note, we also record the following consequence:
\begin{cor}
	With the notation of Lemma~\ref{lema:cocan}, we have that $\Lambda_{\mc B}=\mb I_{\mc B}\times \Lambda_{\widetilde{\mc B}}$.
\end{cor}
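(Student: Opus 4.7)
The proof should be essentially immediate from the block decomposition in Lemma~\ref{lema:cocan}, so the plan is really just to unfold the definitions carefully.

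First I would fix the splitting $\mb U = \mb I_{\mc B} \oplus \mb J$ and write any $u \in \mb U$ as $u = (u_1, u_2)$ with $u_1 \in \mb I_{\mc B}$ and $u_2 \in \mb J$. By Lemma~\ref{lema:cocan}, in the chosen coordinates we have
\[
\mc B(\xi) u = \bigl[\,0_{\tp{Lin}(\mb I_{\mc B},\mb V)} \;\; \widetilde{\mc B}(\xi)\,\bigr]\begin{pmatrix} u_1 \\ u_2 \end{pmatrix} = \widetilde{\mc B}(\xi)\, u_2
\]
for every $\xi \in \R^n$. From this algebraic identity I immediately read off, for each fixed $\xi \in \mb S^{n-1}$,
\[
\ker \mc B(\xi) = \mb I_{\mc B} \times \ker \widetilde{\mc B}(\xi).
\]

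Next I would take the union over $\xi \in \mb S^{n-1}$ on both sides. The factor $\mb I_{\mc B}$ is independent of $\xi$, so it pulls out of the union, giving
\[
\Lambda_{\mc B} \;=\; \bigcup_{\xi \in \mb S^{n-1}} \ker \mc B(\xi) \;=\; \mb I_{\mc B} \times \bigcup_{\xi \in \mb S^{n-1}} \ker \widetilde{\mc B}(\xi) \;=\; \mb I_{\mc B} \times \Lambda_{\widetilde{\mc B}},
\]
which is exactly the claim. There is no real obstacle here; the only thing to be mindful of is that the identification $\mb U \simeq \mb I_{\mc B} \times \mb J$ is the one fixed by the lemma, and that the direct-sum decomposition lets us commute the $\mb I_{\mc B}$ factor past the union without any loss. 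No further machinery is required.
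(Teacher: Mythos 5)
Your argument is correct and is exactly the intended one: the paper records this corollary without proof as an immediate consequence of Lemma~\ref{lema:cocan}, and your unfolding — $\ker \mc B(\xi) = \mb I_{\mc B} \times \ker \widetilde{\mc B}(\xi)$ for each $\xi \in \mb S^{n-1}$, then factoring the constant $\mb I_{\mc B}$ out of the union defining the wave cone — is precisely that consequence.
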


\subsection{Further properties of potentials}

We shall now consider the following question: is there any meaningful sense in which the potential $\mc B$ associated with the operator $\mc A$ is unique? 
To find a canonical potential $\mc B$, one must take into account the following:
\begin{enumerate}
	\item $\mc B$ should have minimal order (for instance, if $\mc B$ is a potential, so is $|\xi|^2 \mc B(\xi)$);
	\item $\mc B$ is at best unique only modulo isomorphisms: if $Q\in \tp{GL}(\mb U)$, then $\mc B Q$ is another potential;
	\item $\mc B$ should be cocanceling, since adding columns of zeroes does not change $\tp{im}\, \mc B$ and hence preserves the exactness \eqref{eq:exact}, see Lemma \ref{lema:cocan}.
\end{enumerate}
While for many of the operators that occur in applications these conditions seem to suffice to single out a canonical potential (modulo isomorphisms of $\mb U$), in general they are not enough:

\begin{prop}\label{prop:nonunique}
	There is a first order constant rank operator $\mc A$ which admits two cocanceling potentials $\mc B_1, \mc B_2$ of minimal order which moreover satisfy $\mc B_1\neq \mc B_2 Q$ for all $Q\in \tp{Lin}(\mb U, \mb U)$.
\end{prop}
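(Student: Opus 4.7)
The plan is to exhibit a counterexample based on the non-commutativity of the quaternions $\mb H$.

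Take $n=4$, identify $\mb V=\R^4$ with $\mb H$, and let $\mc A=\tp{div}\colon \R^4\to\R$; in Fourier variables $\mc A(\xi)v=\langle \xi,v\rangle$ has rank $1$ for every $\xi\neq 0$, so $\mc A$ is first order of constant rank, $\ker\mc A(\xi)=\xi^\perp$ is $3$-dimensional, and $\tp{span}\,\Lambda_{\mc A}=\mb V$, verifying~\eqref{eq:assumption}. Writing $\iota\colon \R^3\to \tp{Im}\,\mb H$ for the embedding $(u_1,u_2,u_3)\mapsto u_1i+u_2j+u_3k$, I would define two first-order operators $\mc B_1,\mc B_2\colon \R^3\to \mb H$ by their Fourier symbols
\[
\mc B_1(\xi)u = \xi\,\iota(u),\qquad \mc B_2(\xi)u = \iota(u)\,\xi,
\]
with products in $\mb H$.

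That each $\mc B_i$ is a cocanceling potential of minimal order follows from a short computation in $\mb H$. Since $\overline{\iota(u)}=-\iota(u)$ and the real part is cyclic,
\[
\langle \xi,\xi\iota(u)\rangle = \tp{Re}\bigl(\xi\,\overline{\iota(u)}\,\bar\xi\bigr) = -\tp{Re}\bigl(\xi\iota(u)\bar\xi\bigr) = -|\xi|^2\,\tp{Re}\,\iota(u) = 0,
\]
so $\tp{im}\,\mc B_1(\xi)\subseteq \xi^\perp$; since $\mb H$ is a division algebra, $\mc B_1(\xi)$ is injective for each $\xi\neq 0$, yielding equality by dimension count and also $\bigcap_{\xi\neq 0}\ker\mc B_1(\xi)=\{0\}$. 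The same reasoning works for $\mc B_2$. An order-zero potential would have $\xi$-independent image, contradicting the variation of $\xi^\perp$, so order $1$ is minimal.

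The crucial step is to rule out every relation $\mc B_1=\mc B_2\,Q$ with $Q\in\tp{Lin}(\R^3,\R^3)$. Were such $Q$ to exist, one would have $\xi\,\iota(u)=\iota(Qu)\,\xi$ in $\mb H$ for all $\xi$ and $u$. Evaluating at $\xi=1$ forces $\iota(u)=\iota(Qu)$, so $Q=I$; the relation then reduces to the universal commutation $\xi\iota(u)=\iota(u)\xi$, which plainly fails in $\mb H$, for instance at $\xi=i$ and $u=(0,1,0)$: we get $ij=k\neq -k=ji$.

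\textbf{Main obstacle.} The hard part is locating the right example. For many natural first-order operators (such as $\tp{div}$ on $\R^3$ or curls of various kinds), a direct analysis shows that all order-$1$ cocanceling potentials are related by a $\mb U$-isomorphism, because the kernel bundle admits an essentially unique polynomial framing. The quaternionic structure on $\R^4$ is what breaks this rigidity: the two framings $\xi\iota(u)$ and $\iota(u)\xi$ of the bundle $\{\xi^\perp\}_{\xi\neq 0}$ differ on the source by the conjugation $u\mapsto \xi\iota(u)\bar\xi/|\xi|^2$, a rational degree-two function of $\xi$ that is manifestly not a constant element of $\tp{Lin}(\R^3,\R^3)$.
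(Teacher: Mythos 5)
Your argument is correct, but it proves the proposition by a genuinely different route than the paper. The paper's proof is purely computational: it exhibits an explicit first order operator $\mc A$ on $\R^3$ with $\mb V=\R^7$, $\mb W=\R^3$, checks (by symbolic computation) that no potential of order one or two exists, and then writes down two explicit third order cocanceling potentials $\mc B_1,\mc B_2$ that are not related by any $Q\in\tp{Lin}(\mb U,\mb U)$; this choice also serves the surrounding discussion, since it shows in addition that the minimal order of a potential can strictly exceed the order of $\mc A$. Your example instead takes $\mc A=\tp{div}$ on $\R^4\cong\mb H$ and the two quaternionic framings $\mc B_1(\xi)u=\xi\,\iota(u)$, $\mc B_2(\xi)u=\iota(u)\,\xi$ of the bundle $\{\xi^\perp\}$: the exactness $\tp{im}\,\mc B_i(\xi)=\ker\mc A(\xi)$ follows from $\tp{Re}\bigl(\xi\,\overline{\iota(u)\,}\bar\xi\bigr)=0$ together with injectivity of left/right multiplication in the division algebra, cocancellation is immediate from that injectivity, minimality of order one is clear since a constant symbol cannot have image $\xi^\perp$ for all $\xi$, and evaluating a putative relation $\mc B_1=\mc B_2Q$ at $\xi=1$ forces $Q=I$ and hence the false commutation $ij=ji$ — all checks I can verify by hand. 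What each approach buys: yours is conceptual, short, and requires no software, and it even gives potentials of the same order as $\mc A$; the paper's example is heavier but delivers the extra information (no potential of the order of $\mc A$, nor of order two) that the authors invoke elsewhere in the text. Your closing claim that for operators like $\tp{div}$ on $\R^3$ all first order cocanceling potentials agree modulo an isomorphism of $\mb U$ is only motivational and is not justified, but nothing in the proof depends on it.
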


The proof of the proposition proceeds by construction of an explicit example; we relegate this to the appendix due to the long computations it requires. The example in the appendix is also one where it is  not possible to choose $\mc B$ to have the order of $\mc A$. It seems to have been known for quite some time that this is generically the case, see for instance \cite[page 445]{Kohn1965}.
A simpler example with this property can be found by considering the symmetric gradient of maps $u\colon \R^2\to \R^2$, which only has annihilators of order two or higher, see also \cite[Remarque 4]{Murat1978}. On the other hand, there is an example \cite[Example~3.10(d)]{Fonseca1999} of a first order annihilator for which the only known potential is $\D^k$. To sum up, we remark that one cannot make any assumption on the relation between the orders of $\mc A$ and $\mc B$.

From our perspective, Proposition \ref{prop:nonunique} implies that,
in the general, the operator $\mc B$ associated to the constant rank operator $\mc A$ has no physical content and is instead a useful mathematical tool. The potential is simply a polynomial parametrization of the wave cone; the physically relevant object is $\tp{ker\,}\mc A(\xi)$. This is already apparent in the Hilbert space axiomatization of Milton \cite{Milton1990} for composite materials, where the author postulates an orthogonal decomposition of the form
$$\mb V = \mc E_\xi \oplus \mc J_\xi, \hs \xi \neq 0;$$ the subspaces $\mc E_\xi$ and $\mc J_\xi$ correspond to the constraints satisfied by the applied  and induced fields, respectively---these would be, for instance, the electric field and current in the case of conductivity, hence the choice of notation. In practice, these constraints come from a partial differential equation and we have $\mc E_\xi=\ker \mc A(\xi)$ and  $\mc J_\xi = \ker \mc B^*(\xi)$ for some suitable operators.

\subsection{Function spaces}\label{sec:funcspaces}

In this subsection we gather some notation for function spaces associated with linear operators and prove some basic properties of these spaces. 
For our purposes it will be important to consider the space of $\mc A$-free test fields, i.e.\
$$C^\infty_{c,\mc A}(\Omega)\equiv \{v \in C^\infty_c(\Omega, \mb V): \mc A v =0\}.$$
In the general case where $\mc A$ is cocanceling (but does not necessarily have constant rank) it is unclear whether this space contains non-zero functions, while it always does in the non-cocanceling case as per Lemma \ref{lema:cocan}. Related to this  we have the following simple lemma (see also 
\cite[Proposition~2.1]{VanSchaftingen2011}):

\begin{lema}
	The space $C^\infty_{c,\mc A}(\R^n)$ is contained  in $\mathscr H^1(\R^n)$ if and only if $\mc A$ is cocanceling.
\end{lema}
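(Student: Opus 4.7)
The plan is to combine two facts that are already on the table: (i) Lemma \ref{lema:char_cocanc}, which says $\mc A$ is cocanceling iff every test field $v$ with $\mc A v = 0$ has zero mean; and (ii) the characterization of bounded compactly supported functions in $\mathscr H^1(\R^n)$ recalled in Section \ref{sec:prelims}: such an $f$ lies in $\mathscr H^1$ precisely when $\int_{\R^n} f \d x = 0$. Since $\mathscr H^1(\R^n, \mb V)$ is understood componentwise, (ii) applies coordinate by coordinate.

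For the forward implication, I would assume $\mc A$ is cocanceling and pick $v \in C^\infty_{c,\mc A}(\R^n)$. By Lemma \ref{lema:char_cocanc} we have $\int_{\R^n} v \d x = 0$ as a vector equation, so each component of $v$ is bounded, compactly supported and has zero mean, hence lies in $\mathscr H^1(\R^n)$ by fact (ii).

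For the converse, I would argue by contrapositive. If $\mc A$ is not cocanceling, then $\mb I_{\mc A} \neq \{0\}$, so there exists $v_0 \in \mb V \setminus \{0\}$ with $\mc A(\xi) v_0 = 0$ for every $\xi \in \R^n$. Identifying polynomial coefficients in $\xi$ gives $A_\alpha v_0 = 0$ for all $|\alpha| = l$. Now choose any $\varphi \in C^\infty_c(\R^n)$ with $\int \varphi \d x \neq 0$ and set $v \equiv \varphi\, v_0$. Then
\[
\mc A v = \sum_{|\alpha|=l} A_\alpha (\partial^\alpha \varphi)\, v_0 = \sum_{|\alpha|=l} (\partial^\alpha \varphi)\, A_\alpha v_0 = 0,
\]
so $v \in C^\infty_{c,\mc A}(\R^n)$, while $\int_{\R^n} v \d x = \bigl(\int \varphi\bigr) v_0 \neq 0$; by fact (ii) applied to any coordinate of $v_0$ that does not vanish, $v \notin \mathscr H^1(\R^n, \mb V)$.

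There is no real obstacle here; the main thing to be careful about is simply invoking (ii) for vector-valued functions componentwise, and making sure the polynomial-identification step in the non-cocanceling direction is explicit (this is the same observation used inside the proof of Lemma \ref{lema:cocan}, so it is already justified in the preceding text).
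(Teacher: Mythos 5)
Your proof is correct and follows essentially the same route as the paper: both directions hinge on the fact that a bounded, compactly supported function lies in $\mathscr H^1(\R^n)$ exactly when it has zero mean, combined with the equivalence between cocancellation and the vanishing of means of $\mc A$-free test fields (Lemma \ref{lema:char_cocanc}). The only difference is cosmetic: for the converse you re-derive the relevant direction of that equivalence by the explicit construction $v=\varphi\, v_0$ with $v_0\in\mb I_{\mc A}$, whereas the paper simply cites the equivalence; both are fine.
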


\begin{proof}
	Suppose that $C^\infty_{c,\mc A}(\R^n)$ is contained in the Hardy space; since functions in $\mathscr H^1(\R^n)$ have zero mean then so do functions in $C^\infty_{c,\mc A}(\R^n)$ and this happens if and only if $\mc A$ is cocanceling. Moreover, test functions with  zero mean are contained in $\mathscr H^1(\R^n)$---in fact, they are dense there---and this proves the other direction.
\end{proof}

For $1\leq p\leq \infty$, we have the $L^p$-type spaces
\begin{align*}
L^p_{\mc A}(\Omega)\equiv\{
v \in L^p(\Omega,\mb V): \mc A v =0\}.
\end{align*}
Associated with $\mc B$, we define the $\mc B$-Sobolev-type spaces 
\begin{align}
  \label{eq:Bsobspace}
\mathscr W^{\mc B,p}(\Omega)\equiv
\tp{clos}_{u\mapsto\|\mc B u\|_{p}} C^\infty_c(\Omega, \mb U).
\end{align}
General properties of the $W^{\mc B,p}$-spaces can be found in the
recent works \cite{Breit2019,Gmeineder2019}.

When $\mc A$ is a constant rank operator and $1<p<\infty$ we have that
$C^\infty_{c,\mc A}$ is dense in $L^p_\mc A$; it is unclear whether
this holds for non constant rank operators. In fact, we have:
\begin{prop}\label{prop:Lp_A=BWB,p}
	If $\mc B$ is a potential for $\mc A$, we have
	\begin{equation}
	\mc B(\mathscr W^{\mc B,p}(\R^n))=\mc B(\dot W^{k,p}(\R^n,\mb U)) = L^p_\mc A(\R^n),
	\label{eq:spaces} 
      \end{equation}
      where $\dot W^{k,p}(\R^n,\mb U)$ denotes the usual homogeneous Sobolev space.
\end{prop}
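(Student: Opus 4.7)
The plan is to prove the chain of inclusions
$$\mc B(\dot W^{k,p}(\R^n,\mb U))\subseteq \mc B(\mathscr W^{\mc B,p}(\R^n))\subseteq L^p_\mc A(\R^n)\subseteq \mc B(\dot W^{k,p}(\R^n,\mb U)),$$
from which both equalities in \eqref{eq:spaces} follow at once. The first two inclusions are easy. Since $\mc B=T\circ\D^k$ as in \eqref{eq:C}, we have $\|\mc B u\|_p\le \|T\|\,\|\D^k u\|_p$, so every $C^\infty_c$-sequence that is Cauchy in the $\D^k$-seminorm is also Cauchy in the $\mc B$-seminorm; passing to the completion gives a canonical map $\dot W^{k,p}\to \mathscr W^{\mc B,p}$ commuting with $\mc B$. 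The second inclusion follows from the exactness \eqref{eq:exact}: at the symbol level $\mc A(\xi)\mc B(\xi)=0$, so $\mc A\mc B u=0$ in $\mathscr D'(\R^n)$ whenever $u\in C^\infty_c$, and the continuity of $\mc A\colon L^p\to W^{-l,p}$ extends this to all of $\mathscr W^{\mc B,p}$.

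The crux is the third inclusion. Given $v\in L^p_\mc A(\R^n)$, I would construct a potential $u$ by prescribing its $k$-th derivatives through Fourier multipliers: for each $|\alpha|=k$, set
$$\p^\alpha u \equiv T_{m_\alpha} v, \qquad m_\alpha(\xi)\equiv (i\xi)^\alpha\mc B(i\xi)^\dagger,$$
where $\mc B(i\xi)^\dagger$ is the Moore--Penrose pseudo-inverse from \eqref{eq:pseudoinv}. By Theorem \ref{teo:b}, $\mc B$ itself has constant rank, so by Lemmas \ref{lema:CRbdd} and \ref{lema:CRgivesmultiplier} the symbol $\xi\mapsto \mc B(i\xi)^\dagger$ is $C^\infty$ on $\R^n\setminus\{0\}$ and positively homogeneous of degree $-k$. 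Hence each $m_\alpha$ is $0$-homogeneous, smooth off the origin and uniformly bounded, so by the H\"ormander--Mihlin multiplier theorem $T_{m_\alpha}$ is bounded on $L^p$ for $1<p<\infty$. This places $u$ in $\dot W^{k,p}(\R^n,\mb U)$ (well-defined modulo polynomials of degree less than $k$, which do not affect $\mc B u$). The identity
$$\mc B(i\xi)\mc B(i\xi)^\dagger=\tp{Proj}_{\tp{im}\,\mc B(i\xi)}=\tp{Proj}_{\ker\mc A(i\xi)},$$
which follows from \eqref{eq:pseudoinv} and \eqref{eq:exact}, then shows at the Fourier level that $\widehat{\mc B u}(\xi)$ coincides with the symbol of $P_\mc A$ from Theorem \ref{teo:Lpest} applied to $\hat v(\xi)$. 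Since $\mc A v=0$ in $\mathscr D'$ gives $P_\mc A v=v$, we conclude $\mc B u=v$.

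The main technical obstacle is making these Fourier-side manipulations rigorous, since $\mc B(i\xi)^\dagger$ is singular at the origin and $\hat v$ is only a tempered distribution when $v\in L^p$. I would handle this by verifying the identity $\mc B u=v$ first for Schwartz functions $v\in\mathscr S\cap L^p_{\mc A}$, where every step is classical, and then extending to every $v\in L^p_\mc A(\R^n)$ by density, using the $L^p$-boundedness of the multipliers $T_{m_\alpha}$ and of the projection $P_\mc A$ supplied by Theorem \ref{teo:Lpest}.
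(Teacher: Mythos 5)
Your construction of the potential through the Moore--Penrose symbol is a viable route and genuinely different from the paper's proof, which instead writes $v=\mc B u+\mc A^*w$ via the Helmholtz--Hodge decomposition of Proposition \ref{prop:hodge} and then kills $f=\mc A^*w$ by noting that $\mc A f=0$ and $\mc B^*f=0$ force $\square f=0$, so $f$ is analytic and, being in $L^p$, vanishes. The multiplier bounds, the $(-k)$-homogeneity and smoothness of $\mc B(i\xi)^\dagger$, and the identity $\mc B(i\xi)\mc B(i\xi)^\dagger=\tp{Proj}_{\ker\mc A(i\xi)}$ are all correct; the point that the prescribed family $T_{m_\alpha}v$ really is $\D^k u$ for a single $u\in\dot W^{k,p}$ needs a (routine) compatibility check on the Fourier side, since you prescribe the derivatives rather than $u$ itself.

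The genuine gap is the final extension step. You verify $\mc B u=v$ for $v\in\mathscr S\cap L^p_{\mc A}$ and then ``extend by density'', but the density of Schwartz (or compactly supported) $\mc A$-free fields in $L^p_\mc A(\R^n)$ is not available at this point: as the paper remarks immediately before the statement, for constant rank operators this density is itself a consequence of Proposition \ref{prop:Lp_A=BWB,p}. Mollification preserves the constraint but not decay, truncation destroys the constraint, and projecting arbitrary smooth approximations with $P_{\mc A}$ begs the question, since it uses exactly the identity $P_{\mc A}v=v$ you are trying to establish; so, as written, the argument is circular. The fix is to prove $P_{\mc A}v=v$ directly for every $v\in L^p(\R^n,\mb V)$ with $\mc A v=0$: on $\R^n\exc\{0\}$ the symbol $\mc A(i\xi)^\dagger$ is smooth, so from $\mc A(i\xi)\hat v=0$ one gets $\tp{Proj}_{(\ker\mc A(i\xi))^\perp}\hat v=\mc A(i\xi)^\dagger\mc A(i\xi)\hat v=0$ there; after checking (by testing against Schwartz functions supported away from the origin) that this distribution coincides with $\mc F(v-P_{\mc A}v)$ off the origin, one concludes that $\mc F(v-P_{\mc A}v)$ is supported at $\{0\}$, hence $v-P_{\mc A}v$ is a polynomial lying in $L^p$, hence zero. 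Some Liouville-type step of this kind cannot be avoided; it is the exact analogue of the paper's ``$f$ is analytic and in $L^p$, hence $f=0$''.
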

Proposition \ref{prop:Lp_A=BWB,p} follows from the following Helmholtz--Hodge decomposition:

\begin{prop}\label{prop:hodge}
	Let $1<p<\infty$. A vector field $v\in L^p(\R^n, \mb V)$ can be uniquely\footnote{Here we do not mean that $u,w$ are  unique, but rather their images $\mc B u, \mc A^* w$.} decomposed as
	$$v=\mc B u +\mc A^* w $$
	for some $u\in \mathscr W^{\mc B,p}(\R^n)$,
	$w\in \mathscr W^{\mc A^*,p}(\R^n) $. 
	Moreover, this decomposition is continuous:
	$$\Vert \mc B u\Vert_{L^p(\R^n)} \leq C  \Vert v \Vert_{L^p(\R^n)}, \hs
	\Vert \mc A^*w \Vert_{L^p(\R^n)} \leq C \Vert \mc A v \Vert_{\dot{W}{^{-l,p}}(\R^n)}.
	$$
\end{prop}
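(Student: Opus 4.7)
The plan is to realize the decomposition via Fourier multipliers, exploiting the constant rank of both $\mc A$ and the potential $\mc B$ from Theorem~\ref{teo:b}. The starting point is the pointwise orthogonal splitting
$$\mb V = \ker \mc A(\xi) \oplus \tp{im}\, \mc A^*(\xi), \qquad \xi \in \R^n \setminus \{0\},$$
together with the identity $\ker \mc A(\xi) = \tp{im}\, \mc B(\xi)$. By Lemma~\ref{lema:CRgivesmultiplier}, the orthogonal projections $P_\mc A(\xi) \equiv \tp{Proj}_{\ker \mc A(\xi)}$ and $P_\mc A^\perp(\xi) \equiv \tp{Proj}_{\tp{im}\,\mc A^*(\xi)}$ are smooth and $0$-homogeneous on $\R^n \setminus \{0\}$, hence define bounded Fourier multipliers on $L^p$ by H\"ormander--Mihlin. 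I would then define the two pieces by
$$\widehat{\mc B u}(\xi) := P_\mc A(\xi) \hat v(\xi), \qquad \widehat{\mc A^* w}(\xi) := P_\mc A^\perp(\xi) \hat v(\xi),$$
which immediately yields $v = \mc B u + \mc A^* w$ with the coarse bound $\|\mc B u\|_{L^p} + \|\mc A^* w\|_{L^p} \leq C\|v\|_{L^p}$.

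To upgrade the second estimate, I would invoke the constant rank of $\mc B$ (Theorem~\ref{teo:b}) and of $\mc A \mc A^*$, so that Lemma~\ref{lema:CRbdd} produces smooth pseudo-inverses $\mc B(\xi)^\dagger$ and $[\mc A(\xi)\mc A^*(\xi)]^\dagger$ on $\R^n \setminus \{0\}$, of homogeneity $-k$ and $-2l$ respectively. From \eqref{eq:pseudoinv} one has $P_\mc A^\perp(\xi) = \mc A^*(\xi)[\mc A(\xi)\mc A^*(\xi)]^\dagger \mc A(\xi)$, so
$$\widehat{\mc A^* w}(\xi) = \mc A^*(\xi)[\mc A(\xi)\mc A^*(\xi)]^\dagger \widehat{\mc A v}(\xi),$$
and the symbol $|\xi|^l \mc A^*(\xi)[\mc A(\xi)\mc A^*(\xi)]^\dagger$ is smooth and $0$-homogeneous (the orders cancel: $l - 2l + l = 0$). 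A second application of H\"ormander--Mihlin then gives $\|\mc A^* w\|_{L^p} \leq C\|\mc A v\|_{\dot W^{-l,p}}$. Uniqueness of $\mc B u$ and $\mc A^* w$ is immediate pointwise in Fourier space: if $h := \mc B(u_1 - u_2) = \mc A^*(w_2 - w_1)$, then $\hat h(\xi) \in \tp{im}\,\mc B(\xi) \cap \tp{im}\,\mc A^*(\xi) = \ker \mc A(\xi) \cap (\ker \mc A(\xi))^\perp = \{0\}$.

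The main technical obstacle is the density step needed to conclude that $u \in \mathscr W^{\mc B, p}(\R^n)$ and $w \in \mathscr W^{\mc A^*, p}(\R^n)$, i.e., that $\mc B u$ and $\mc A^* w$ lie in the $L^p$-closures of $\mc B(C^\infty_c(\R^n, \mb U))$ and $\mc A^*(C^\infty_c(\R^n, \mb W))$ respectively. I would establish this first for $v \in C^\infty_c(\R^n, \mb V)$: the distributional potential $Tv$ defined by $\widehat{Tv}(\xi) := \mc B(\xi)^\dagger \hat v(\xi)$ is smooth but typically only polynomially decaying, so I would approximate via cutoffs $\chi_R Tv \in C^\infty_c$ and expand
$$\mc B(\chi_R Tv) = \chi_R\, \mc B u + [\mc B, \chi_R] Tv,$$
where the first term converges to $\mc B u$ in $L^p$ by dominated convergence, and the commutator is controlled by pairing the scale bounds $|\D^j \chi_R| \lesssim R^{-j}$ (supported in annuli of width $\sim R$) against the decay of $\D^{k-j} Tv$. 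Boundedness of the multipliers plus density of test fields in $L^p(\R^n, \mb V)$ then extends the decomposition from test $v$ to arbitrary $v \in L^p$, completing the proof.
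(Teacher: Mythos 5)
Your argument is essentially correct, but it takes a genuinely different route from the paper's. The paper does not define the two pieces by Fourier projections; instead it introduces the auxiliary operator $\Box \equiv \mc B\mc B^* + \mc A^*\mc A\Delta^{k-l}$ (after raising the order of $\mc B$ if necessary so that $k\geq l$), observes that exactness \eqref{eq:exact} makes $\Box(\xi)$ invertible for $\xi\neq 0$, solves $\Box\varphi=v$ with the elliptic estimate $\|\D^{2k}\varphi\|_{L^p}\lesssim\|v\|_{L^p}$, and sets $u=\mc B^*\varphi$, $w=\mc A\Delta^{k-l}\varphi$. That construction yields the memberships $u\in\mathscr W^{\mc B,p}$, $w\in\mathscr W^{\mc A^*,p}$ almost for free (approximate $\varphi$ by test functions using the elliptic estimate), and it even produces potentials in $\dot W^{k,p}$ and $\dot W^{l,p}$; in your scheme this membership is precisely the part that requires the extra cutoff/density work. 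Your refined bound for $\mc A^*w$ is the same computation as the paper's: both reduce to a $0$-homogeneous symbol acting on $\widehat{\mc A v}/|\xi|^l$ (the paper uses $\mc A^\dagger(\xi/|\xi|)$, you use $\mc A^*(\mc A\mc A^*)^\dagger$, which is the same operator), followed by H\"ormander--Mihlin. Your multiplier definition of the splitting is in fact the ``standard method'' the paper alludes to when citing Fonseca--M\"uller and Giannetti--Verde.

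Two points in your write-up need tightening, though both are repairable. First, the uniqueness argument ``pointwise in Fourier space'' is not literally meaningful for $L^p$ fields with $p>2$, whose Fourier transforms are only tempered distributions; either argue by duality as the paper does, or note that the bounded multiplier $P_{\mc A}$ satisfies $P_{\mc A}(\mc B\phi)=\mc B\phi$ and $P_{\mc A}(\mc A^*\psi)=0$ for test fields, hence on all of $\mc B(\mathscr W^{\mc B,p})$ and $\mc A^*(\mathscr W^{\mc A^*,p})$ by continuity, so any $h$ lying in both images satisfies $h=P_{\mc A}h=0$. Second, $\widehat{Tv}=\mc B^\dagger(\xi)\hat v(\xi)$ is homogeneous of degree $-k$ at the origin and fails to be locally integrable there once $k\geq n$, and the decay you invoke for $\D^{k-j}Tv$ may carry logarithmic corrections; the clean fix is to run the cutoff argument only for $v$ in the dense class of fields whose Fourier transform vanishes near $\xi=0$ (then $Tv$ is Schwartz and the commutator terms vanish in the limit), and to pass to general $v\in L^p$ using boundedness of $P_{\mc A}$ together with the fact that $\mc B(\mathscr W^{\mc B,p})$ is, by definition of $\mathscr W^{\mc B,p}$ as a closure, a closed subspace of $L^p$.
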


Proposition \ref{prop:hodge} follows by standard methods from Theorem \ref{teo:b}, see for instance \cite{Fonseca1999,Giannetti2000}. We will  in fact construct $u\in \dot{W}{^{k,p}}(\R^n,\mb U),\,w\in \dot{W}{^{l,p}}(\R^n,\mb W)$.

\begin{proof}
	
	We begin by remarking that, once we have the decomposition, uniqueness follows straightforwardly from orthogonality. Indeed, consider a decomposition of zero, $0=\mc B u + \mc A^* w$. If $p'$ denotes the H\"older conjugate of $p$, let $\varphi \in L^{p'}(\R^n, \mb V)$ be arbitrary and write $\varphi = \mc B \chi + \mc A^*\psi$ for $\chi \in \mathscr{W}^{\mc B,p'}, \psi \in \mathscr{W}^{\mc A^*, p'}$. Then
	$$\int_{\R^n} \langle \mc B u, \varphi\rangle = 
	\int_{\R^n} \langle \mc B u, \mc B \chi \rangle+
	\int_{\R^n} \langle \mc B u, \mc A^* \psi\rangle=
	\int_{\R^n} \langle \mc B u, \mc B \chi \rangle =
	- \int_{\R^n} \langle \mc A^* w, \mc B \chi \rangle
	=0$$
	where we used twice the fact that $\int \langle \mc B b, \mc A^* a \rangle =0$ for all $b\in \mathscr W^{\mc B, p}, a \in \mathscr W^{\mc A^*,p'}$ in view of (\ref{eq:exact}).
	
	We assume that $\tp{ord}\,\mc B =k\geq l=\tp{ord}\,\mc A$, for otherwise we can replace $\mc B$ by $|\xi|^{2m} \mc B(\xi)$ for $m$ sufficiently large. Let $j=k-l$ and consider the homogeneous $k$-th order operator
	$$\Box\equiv \mc B \mc B^* + \mc A^*\mc A \t^{j};$$
	by the exactness relation (\ref{eq:exact}), this operator is elliptic, meaning that $\square(\xi)\in \tp{GL}(\mb V)$ for all $0\neq \xi\in\R^n$. This can be seen by letting $v_0\in\ker \square(\xi)$ and writing
	$$
	0=\langle \square(\xi)v_0,v_0 \rangle=\langle \mc B(\xi)\mc B^*(\xi)v_0,v_0 \rangle+|\xi|^{2j}\langle \mc A^*(\xi)\mc A(\xi)v_0,v_0 \rangle
=|\mc B^*(\xi)v_0|^2+|\xi|^2|\mc A(\xi)v_0|^2,	$$
	so $v_0\in\ker\mc B^*(\xi)\cap\ker \mc A(\xi)=\ker\mc B^*(\xi)\cap\mathrm{im\,} \mc B(\xi)=\left(\mathrm{im\,} \mc B(\xi)\right)^\perp\cap\mathrm{im\,} \mc B(\xi)=\{0\}$.
	
	Consequently, we can solve
	$\Box \varphi = v$
	for $\varphi \in \dot{W}{^{2k,p}}(\R^n,\mb V)$ with the elliptic estimate
	\begin{equation}
	\Vert D^{2k} \varphi\Vert_{L^p(\R^n)} \leq C \Vert v \Vert_{L^p(\R^n)}.
	\label{eq:ellipticest}
	\end{equation}
	Now define
	$$u\equiv \mc B^* \varphi, \hs w \equiv \mc A \t^j \varphi;$$
	then (\ref{eq:ellipticest}) already gives the estimate for $\mc B u$ in the statement, as well as a similar estimate for $\mc A^*w$. Note that due to the bounds in \eqref{eq:ellipticest}, we can assume that $\varphi\in C^\infty_c(\R^n,\mb V)$, otherwise it can be replaced with an approximating sequence $\varphi_j$ such that $\square\varphi_j$ converges to $v$ in $L^p$.
	
	To get the better estimate for $\mc A^*w$, we apply $\mc A$ to the decomposition to get $\mc A v=\mc A\mc A^*w$, so that we can compute in Fourier space, for $\xi\neq 0$,
	$$
	\mc A^*(\xi)\hat{w}(\xi)=\mc A^\dagger(\xi)\mc A(\xi)\mc A^*(\xi) \hat w(\xi)=\mc A^\dagger\left(\frac{\xi}{|\xi|}\right)\frac{\widehat{\mc A v}(\xi)}{|\xi|^l},
	$$
	where we used the fact that $A^\dagger A=\mathrm{Proj}_{\mathrm{im\,}A^*}$. The H\"ormander--Mihlin multiplier theorem then implies that
	$$
	\|\mc A^* w\|_{L^p(\R^n)}\leq C\left\|\mc F^{-1}\left(\frac{\widehat{\mc A v}(\xi)}{|\xi|^l}\right)\right\|_{L^p(\R^n)}=C\|\mc Av\|_{\dot{W}{^{-l,p}}(\R^n)},
	$$
	which concludes the proof.
\end{proof}
\begin{proof}[Proof of Proposition~\ref{prop:Lp_A=BWB,p}]
	Let $v\in L^p(\R^n)$ with $\mc Av=0$. Using Proposition~\ref{prop:hodge}, we have that $v=\mc B u+f$, where $f=\mc A^*w\in L^p(\R^n,\mb V)$ is such that $\mc B^* f=0$. This follows since the exactness (\ref{eq:exact}) can equivalently be written as $\tp{im\,}\mc A^*(\xi)=\ker\mc B^*(\xi)$ for $\xi\neq 0$, hence $\mc B^*\circ\mc A^*=0$. On the other hand, since $v$ is $\mc A$-free, we also obtain $\mc A f=0$. Therefore $\square f=0$, so that $f$ is analytic by the ellipticity of $\square$. Since $f\in L^p(\R^n)$, we conclude that $f=0$, which implies the only non-trivial inclusion in \eqref{eq:spaces}.
\end{proof}
Through the multiplier $\mc A^\dagger({\xi}/{|\xi|})$, the proof of the Helmholtz--Hodge
decomposition in Proposition~\ref{prop:hodge} relies heavily on the Calder\'on--Zygmund theory to solve an auxiliary partial differential equation in full space. Having a similar decomposition that holds in bounded domains may be a viable tool to tackle other problems in the field. This motivates the following:
\begin{question}\label{qu:helmholtz}
	Let $\Omega\subset\R^n$ be a sufficiently regular bounded domain and $1<p<\infty$. Is it the case that each $v\in L^p(\Omega,\mb V)$ has a unique decomposition
	$$
	v=\mc B u+\mc A^* w+h,
	$$
	where $u\in \mathscr{W}{^{\mc B,p}}(\Omega)$, $w\in\mathscr{W}{^{\mc A^*,p}}(\Omega)$, and $\mc B^* h=0$, $\mc A h=0$ in the sense of distributions, with the bounds
	$$
	\|\mc B u\|_{L^p(\Omega)}+ \|h\|_{L^p(\Omega)} \leq C_p \|v\|_{L^p(\Omega)},\quad \|\mc A^*w \|_{L^p(\Omega)}\leq C\|\mc Av\|_{\dot{W}{^{-l,p}(\Omega)}}?
	$$
\end{question}
It is known that the domain $\Omega$ cannot be taken to be an arbitrary open set \cite{Hajlasz1996}.
The ``harmonic'' field $h$ is analytic in $\Omega$, since it satisfies $\square h=0$. 
It is also known that one cannot hope for a decomposition with $h=0$, since this is not the case for
exterior differentials and codifferentials; in this situation,
furthermore, the answer to the question is positive, see for instance
\cite{Sil2017} for an elementary proof. Question \ref{qu:helmholtz} is also true for $p=2$:
\begin{proof}[Answer to Question~\ref{qu:helmholtz} for $p=2$]
	Note that the orthogonal
	complement in $L^2(\Omega,\mb V)$ of $X\equiv \{\mc B u: u \in C^\infty_c(\Omega, \mb U)\}$ is
	$$
	Y\equiv\{v\in L^2(\Omega,\mb V): \mc B^* v = 0 \tp{ in the sense of distributions}\}.
	$$
	This follows from the following identity, which holds for all $u\in C_c^\infty(\Omega,\mb U)$ and $f\in L^2(\Omega,\mb V)$:
	$$
	\langle f,\mc B u\rangle_{L^2}=\int_{\Omega}\langle f,\mc B u\rangle_{\mb V}\dif x=\langle f,\mc B u\rangle_{\mathscr D^\prime,\mathscr D}=(-1)^k\langle \mc B^*f, u\rangle_{\mathscr D^\prime,\mathscr D}.
	$$
	The projection theorem yields the orthogonal decomposition $L^2(\Omega,\mb V)=\overline{X}\oplus Y$. We then note that $Z\equiv \{\mc A^* w\colon w\in C^\infty_c(\Omega,\mb W)\}$ is a subspace of $Y$. An analogous argument shows that the orthogonal complement of $Z$ in $Y$ is $H\equiv \{h\in L^2(\Omega,\mb V)\colon \mc A h=0,\,\mc B^*h=0\}$. In particular, we obtain the orthogonal decomposition $L^2(\Omega,\mb V)=\overline{X}\oplus\overline{Z}\oplus H$, which gives the claim, except for the negative Sobolev bound. To prove this as well, note that we already have a sequence $w_j\in C^\infty_c(\Omega,\mb W)$ such that $\mc A^* w_j\to v-\mc B u-h$ in $L^2(\Omega,\mb V)$, so that $\mc A\mc A^* w_j\to\mc A v$ in $\dot{W}{^{-l,2}(\Omega,\mb V)}$. It remains to recall the last estimate from the proof of Proposition~\ref{prop:hodge}, i.e.
	$$
	\|\mc A^* w_j\|_{L^2(\Omega)}=\|\mc A^* w_j\|_{L^2(\R^n)}\leq C\|\mc A\mc A^* w_j\|_{\dot{W}{^{-l,2}}(\R^n)}=C\|\mc A\mc A^* w_j\|_{\dot{W}{^{-l,2}}(\Omega)},
	$$
	where the equalities  follow since $w_j$ are supported inside $\Omega$.
\end{proof}

\section{$\mc A$-quasiconvexity and weak lower semicontinuity}\label{sec:lsc}

We recall the following definition \cite{Fonseca1999}, generalizing the previous notion of \textsc{Morrey} \cite{Morrey1952}:

\begin{ndef}\label{def:A-qc}
	A locally bounded, Borel function $F\colon \mb V \to \R$ is \textbf{$\mc A$-quasiconvex} if 
	$$0\leq \int_{[0,1]^n} F(z+ v(x))-F(z) \d x $$
	for all $z\in \mb V$ and all $v\in C^\infty_\tp{per}([0,1]^n,
        \mb V)$ such that $\mc A v=0$ and $\int_{[0,1]^n} v =0$.

Moreover, $F\colon \mb V \to \R$ is said to be $\mc A$-\textbf{quasiaffine} if both $F$ and $-F$ are $\mc A$-quasiconvex.
\end{ndef}

An important consequence of Theorem \ref{teo:b} is that, under a constant rank assumption, the above definition can be changed to resemble more closely the original definition of quasiconvexity in the gradient case (see \cite[Corollary 1]{Raita2018}):

\begin{cor}\label{cor:Bqc}Let $\Omega\subseteq \R^n$ be a non-empty
  open subset, $\mc A$ be a constant rank operator as in the setup of
        Theorem \ref{teo:b} and let $\mc B$ be an operator as in
        (\ref{eq:defb}) which satisfies (\ref{eq:exact}).
	A locally bounded Borel function $F\colon \mb V \to \R$ is
        $\mc A$-quasiconvex, respectively $\mc A$-quasiaffine, if and only if
	$$0\leq \int_\Omega F(z+\mc B u(y))-F(z) \d y,$$
        respectively
        \begin{equation}
\label{eq:B-qa}
0=\int_\Omega F(z + \mc B u) - F(z) \d x, \end{equation}
	for all $z \in \mb V$ and all $u \in C^\infty_c(\Omega, \mb U)$.
\end{cor}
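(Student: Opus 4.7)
The plan is to prove the equivalence for $\mc A$-quasiconvex $F$; the $\mc A$-quasiaffine case then follows by applying the equivalence to both $F$ and $-F$. A change of variables shows that the $\mc B$-inequality tested over $\Omega$ is equivalent to the $\mc B$-inequality tested over any cube $Q \subset \Omega$ (the rescaling of $u$ produces a factor $r^k$ that can be absorbed by relabeling $u \leadsto u/r^k$), so it suffices to work with $\Omega = Q = (0,1)^n$.

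The direction from $\mc A$-quasiconvexity to the $\mc B$-version is direct: given $u \in C^\infty_c((0,1)^n, \mb U)$, set $v := \mc B u \in C^\infty_c((0,1)^n, \mb V)$. Its extension by zero is smooth, $[0,1]^n$-periodic, $\mc A$-free by the exactness relation~(\ref{eq:exact}), and has $\int_{[0,1]^n} v = 0$ by integration by parts. The defining inequality of $\mc A$-quasiconvexity applied to this $v$ gives the $\mc B$-version.

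The converse is more substantial. Given admissible periodic $v$, I first lift it to a smooth periodic potential $w$ with $\mc B w = v$, via Fourier series: $\mc A v = 0$ gives $\hat v_\xi \in \ker \mc A(2\pi i \xi) = \tp{im}\, \mc B(2\pi i \xi)$ for $\xi \in \Z^n \setminus \{0\}$, so I set $\hat w_\xi := \mc B^\dagger(2\pi i \xi) \hat v_\xi$ and $\hat w_0 := 0$. The decay $|\mc B^\dagger(\xi)| \lesssim |\xi|^{-k}$, which follows from the constant rank of $\mc B$ (Theorem~\ref{teo:b}) via Lemma~\ref{lema:CRbdd} and the homogeneity of $\mc B$, propagates the rapid decay of $\hat v$ to $\hat w$, hence $w \in C^\infty_\tp{per}([0,1]^n, \mb U)$.

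I then produce compactly supported test fields via oscillation plus cutoff. Setting $w_j(x) := j^{-k} w(jx)$ gives $\mc B w_j(x) = v(jx) =: v_j(x)$. Choose $\chi_j \in C^\infty_c((0,1)^n)$ with $\chi_j \equiv 1$ on $Q_j := (j^{-1/2}, 1-j^{-1/2})^n$ and $|\partial^\alpha \chi_j| \lesssim j^{|\alpha|/2}$, and set $u_j := \chi_j w_j$. The Leibniz rule gives $\mc B u_j = \chi_j v_j + E_j$, where each term of the commutator $E_j$ has the form $\partial^\beta \chi_j \cdot \partial^{\alpha - \beta} w_j$ with $|\beta| \geq 1$ and $|\alpha|=k$, yielding $\|E_j\|_\infty \lesssim j^{-1/2}$. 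Apply the $\mc B$-hypothesis to $u_j$ and split: on the strip $(0,1)^n \setminus Q_j$ (of measure $O(j^{-1/2})$) one has $|\mc B u_j| \leq \|v\|_\infty + 1$, so its contribution vanishes by local boundedness of $F$; on $Q_j$, the uniform smallness of $E_j$ combined with the Riemann-Lebesgue weak-$*$ convergence $F(z + v_j) \wstar \int_{[0,1]^n} F(z + v)\,dy$ in $L^\infty$ produces the Fonseca-M\"uller inequality as $j \to \infty$. The main obstacle is the balance in this cutoff construction between the oscillation scale $j^{-1}$ and the cutoff width $j^{-1/2}$, which must be chosen so that $\|E_j\|_\infty \to 0$, the strip has vanishing measure, and $\|\mc B u_j\|_\infty$ remains bounded; the passage from continuous to Borel $F$ is a standard technical point, handled by Lusin-type approximation.
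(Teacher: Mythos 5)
Your argument is correct and is essentially the proof this corollary rests on: the paper defers it to \cite[Corollary 1]{Raita2018}, whose argument is exactly your two steps --- periodize the zero-extension of $\mc B u$ for the easy direction, and for the converse lift a periodic $\mc A$-free field to a periodic potential via Fourier series and $\mc B^\dagger$ (using constant rank of $\mc B$ for the bound $|\mc B^\dagger(\xi)|\lesssim|\xi|^{-k}$), then rescale and cut off. Two minor simplifications: on $Q_j$ the commutator $E_j$ vanishes identically, so only its boundedness on the strip is needed, and no Lusin-type step is required, since for integer dilations periodicity gives $\int_{(0,1)^n}F(z+v(jx))\d x=\int_{(0,1)^n}F(z+v(y))\d y$ exactly (equivalently, Riemann--Lebesgue applies directly to the bounded Borel periodic function $x\mapsto F(z+v(x))$).
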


In particular, Corollary \ref{cor:Bqc} shows that $F\colon \mb V\to
\R$ is $\mc A$-quasiaffine if and only if
 for all $z\in \mb V$ and $u \in C^\infty_c(\Omega, \mb U)$ and every non-empty open set $\Omega \subset \R^n$.

Besides constant rank, it will be important to assume that the wave cone of $\mc A$ spans the entire space. This is related to the following well-known lemma \cite[Section~2.5]{Arroyo-Rabasa2018};  we give a proof only for the sake of completeness.

\begin{lema}\label{lema:ctsAqc}
	We have $\tp{span}\,\Lambda_\mc A=\mb V$ if and only if all $\mc
	A$-quasiconvex functions are continuous.
\end{lema}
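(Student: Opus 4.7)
The plan is to prove the two directions separately, in each case pivoting through the notion of $\Lambda_{\mc A}$-convexity, i.e., convexity of $t\mapsto F(z+t\lambda)$ for every $z\in\mb V$ and every $\lambda\in\Lambda_{\mc A}$. The link to $\mc A$-quasiconvexity is the classical laminate construction: given $\lambda\in\Lambda_{\mc A}$ with $\mc A(\xi)\lambda=0$ for some nonzero $\xi\in\R^n$ (which, up to rescaling, may be taken to be an integer vector), and any smooth $1$-periodic mean-zero $h\colon\R\to\R$, the field $v(x)\equiv\lambda\,h(\xi\cdot x)$ is periodic, mean-zero, and satisfies $\mc A v=\mc A(\xi)\lambda\,h^{(l)}(\xi\cdot x)=0$. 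Testing the $\mc A$-quasiconvexity inequality of Definition \ref{def:A-qc} with $v$ gives
\[
F(z)\leq\int_{[0,1]^n}F(z+\lambda h(\xi\cdot x))\,\d x=\int_0^1 F(z+\lambda h(t))\,\d t,
\]
and letting the distribution of $h$ approximate an arbitrary mean-zero probability measure on $\R$ yields convexity of $t\mapsto F(z+t\lambda)$.

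For the forward direction, once $F$ is $\Lambda_{\mc A}$-convex and $\tp{span}\,\Lambda_{\mc A}=\mb V$, I would extract a basis $\{\lambda_1,\dots,\lambda_d\}\subset\Lambda_{\mc A}$ of $\mb V$; in the associated linear coordinates $F$ is separately convex. A classical fact, proved by iterating a one-dimensional chord bound, asserts that a locally bounded separately convex function on $\R^d$ is locally Lipschitz, and in particular continuous.

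For the converse, assume $\mb U\equiv\tp{span}\,\Lambda_{\mc A}\subsetneq\mb V$ and let $P\colon\mb V\to\mb U^\perp$ denote the orthogonal projection onto the non-trivial complement. Pick any locally bounded, Borel, discontinuous function $f\colon\mb U^\perp\to\R$ (for instance, the characteristic function of a ball), and set $F(v)\equiv f(Pv)$. For any $v\in C^\infty_\tp{per}([0,1]^n,\mb V)$ with zero mean and $\mc A v=0$, the Fourier series $v(x)=\sum_{k\in\Z^n\setminus\{0\}}\hat v(k)e^{2\pi ik\cdot x}$ must satisfy $\mc A(k)\hat v(k)=0$, so the real and imaginary parts of each $\hat v(k)$ lie in $\ker\mc A(k/|k|)\subset\Lambda_{\mc A}\subset\mb U$. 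Therefore $v(x)\in\mb U$ pointwise, hence $Pv\equiv 0$ and $F(z+v(x))=f(Pz)=F(z)$. This shows $F$ is $\mc A$-quasiaffine, a fortiori $\mc A$-quasiconvex, yet manifestly discontinuous.

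The main obstacle is the classical step that separately convex, locally bounded functions are continuous (in fact, locally Lipschitz), which, while standard, requires a careful induction on dimension rather than a single Jensen-type estimate. All other ingredients — the laminate construction delivering $\Lambda_{\mc A}$-convexity and the Fourier argument in the converse — are routine.
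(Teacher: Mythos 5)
Your route is the same as the paper's: the forward direction goes through $\Lambda_{\mc A}$-convexity via laminates and the classical fact that locally bounded separately convex functions are locally Lipschitz (the paper simply cites \cite[Lemma 2.3]{Kirchheim2016} for the latter), and the converse is exactly the paper's construction of a discontinuous $\mc A$-quasiaffine function depending only on the component in $(\tp{span}\,\Lambda_{\mc A})^\perp$, justified, as you do, by the Fourier-side observation that periodic mean-zero $\mc A$-free fields take values in $\tp{span}\,\Lambda_{\mc A}$. The converse is fine; in the laminate step note only that, since $F$ is merely Borel, you should take profiles $h$ that are \emph{exactly} two-valued outside a set of measure $\e$ and absorb the transition layer using local boundedness, rather than ``letting the distribution of $h$ approximate an arbitrary measure'', which already presupposes some continuity of $F$.

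There is, however, one step that is wrong as written: a general $\xi\in\R^n\setminus\{0\}$ with $\mc A(\xi)\lambda=0$ cannot ``up to rescaling be taken to be an integer vector''; only directions with rationally proportional components can. Since Definition \ref{def:A-qc} tests with $[0,1]^n$-periodic fields, the laminate $v(x)=\lambda\,h(\xi\cdot x)$ is admissible only for $\xi\in\Z^n$, so your argument yields convexity of $t\mapsto F(z+t\lambda)$ only for $\lambda\in\ker\mc A(\xi)$ with rational $\xi$. This is not a cosmetic point: for $\mc A=\p_1-\sqrt{2}\,\p_2$ acting on scalar fields the only admissible periodic test field is $v\equiv 0$, so every locally bounded Borel function is $\mc A$-quasiconvex while $\tp{span}\,\Lambda_{\mc A}=\mb V$; the forward implication genuinely uses the standing constant rank hypothesis in \eqref{eq:assumption}. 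The repair is short: since you only need a spanning family of directions to run the separate-convexity argument, invoke constant rank and Lemma \ref{lema:CRgivesmultiplier} — continuity of $\xi\mapsto\tp{Proj}_{\ker\mc A(\xi)}$ away from the origin together with density of rational directions shows that $\bigcup_{\xi\in\Q^n\setminus\{0\}}\ker\mc A(\xi)$ still spans $\tp{span}\,\Lambda_{\mc A}=\mb V$. With this modification your proof is complete and coincides in substance with the paper's, which leaves both standard facts to the references.
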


\begin{proof}
	The direction $\Rightarrow$ is standard and follows from the fact that
	any such function is $\Lambda$-convex and $\Lambda$-convex functions
	are (locally Lipschitz) continuous in $\tp{span}\, \Lambda$, see
	e.g. \cite[Lemma 2.3]{Kirchheim2016}.
	To prove $\Leftarrow$ assume $\tp{span}\, \Lambda \neq \mb V$. Then
	we can write $(v_1, v_2)\in\mb V=\tp{span}\,\Lambda\oplus \mb
	{\widetilde V}$ where
	$\mb{\widetilde V}\neq \{0\}$. The function defined by
	$F(v_1,v_2)=1_{\{v_2=0\}}(v_1, v_2)$ is a discontinuous $\mc
	A$-quasiconvex function; in fact, it is even $\mc A$-quasiaffine. Here we used the fact that periodic $\mc A$-free fields take their values in $\tp{span}\,\Lambda$.
\end{proof}

In what follows we will make the standard assumption that $F\colon \mb V\to \R$ satisfies a $p$-growth condition
\begin{equation}
\label{eq:Gp}
|F(v)|\leq C(|v|^p+1)
\tag{$G_p$}
\end{equation}
The importance of $\mc A$-quasiconvexity is its relation to lower semicontinuity, made precise by the following fundamental result by \textsc{Fonseca}--\textsc{M\"uller} \cite{Fonseca1999} (see also \cite[Remark 1.3]{Arroyo-Rabasa2018}):

\begin{teo}\label{teo:lsc}
	Let $\mc A$ have constant rank and let $F\colon \Omega \times \mb V\to \R$ be a Carath\'eodory integrand. The functional $v\mapsto \int_\Omega F(x,v(x)) \d x$ is sequentially weakly-$*$ lower semicontinuous on $L_{\mc A}^{ \infty}(\Omega)$ if and only if for each fixed $x_0 \in \Omega$ the map $F(x_0, \cdot)$ is $\mc A$-quasiconvex.
	
	Moreover, if (\ref{eq:Gp}) holds for some $1<p<\infty$ and we fix $1<p<q$, then we have
	$$\begin{rcases}v_j \w v &\tp{in } L^{q}(\Omega)\\
	\mc A v_j \to 0 & \tp{in } W^{-l,q}(\Omega)
	\end{rcases}
	\hs \implies \hs
	\liminf_{j\to \infty} \int_\Omega F(x,v_j(x)) \d x \geq \int_\Omega F(x,v(x))\d x $$
	if and only if for a.e.\ $x_0\in \Omega$ the map $F(x_0,\cdot)$ is $\mc A$-quasiconvex.
\end{teo}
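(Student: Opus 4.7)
\emph{Necessity.} Fix $z \in \mb V$ and a periodic $\mc A$-free test field $v \in C^\infty_{\tp{per}}([0,1]^n, \mb V)$ with $\int v = 0$. For each $j$, define $v_j(x) = z + v(j(x - x_0))$ for an arbitrary $x_0 \in \Omega$, extended by the periodicity of $v$ to all of $\R^n$. Then $v_j \in L^\infty_\mc A(\Omega) \subset L^q_\mc A(\Omega)$ and $v_j \wstar z$ by Riemann--Lebesgue, while $\mc A v_j = 0$ identically. Applying the assumed lower semicontinuity on arbitrary subdomains $\Omega' \subset \Omega$, together with the asymptotic identity $\int_{\Omega'} F(x, v_j) \d x \to \int_{\Omega'} \fint_{[0,1]^n} F(x, z + v(y)) \d y \d x$, and invoking Lebesgue differentiation, yields the $\mc A$-quasiconvexity inequality $F(x_0, z) \leq \fint_{[0,1]^n} F(x_0, z+v(y)) \d y$ for a.e. $x_0 \in \Omega$; a countable density argument in $(z,v)$ produces a single null set valid for all test pairs.

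\emph{Sufficiency.} I would first reduce to autonomous integrands $F \geq 0$: Scorza--Dragoni approximates the Carath\'eodory $F$ by step functions in $x$, each step being $\mc A$-quasiconvex, while adding $C(|v|^p + 1)$ (convex, hence $\mc A$-quasiconvex) makes the integrand nonnegative. Given $v_j \w v$ in $L^q$ with $\mc A v_j \to 0$ in $W^{-l,q}_{\tp{loc}}$, work locally on a cube $Q \Subset \Omega$: after a bounded extension of $v_j - v$ to $\R^n$, apply the Helmholtz decomposition of Proposition \ref{prop:hodge} to write
\[ v_j - v = \mc B u_j + r_j, \qquad \|r_j\|_{L^q(\R^n)} \lesssim \|\mc A(v_j - v)\|_{\dot W^{-l,q}} \to 0, \]
so $\mc B u_j \w 0$ in $L^q$ and it suffices to bound $\liminf_j \int_\Omega F(v + \mc B u_j) \d x$ from below by $\int_\Omega F(v) \d x$. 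Cover $\Omega$ by a Vitali family of small cubes $Q_\rho(x_0)$ centered at common Lebesgue points of $v$ and $F(v)$. On each such cube, rescale to the unit cube, truncate $u_j$ by a cutoff $\eta \in C^\infty_c(Q_\rho)$, and apply the $\mc B$-form of $\mc A$-quasiconvexity from Corollary \ref{cor:Bqc} to obtain the local inequality; summing over the cover recovers the global bound.

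\emph{Main obstacle.} The truncation step is the most delicate point. The mismatch $\mc B(\eta u_j) - \eta \mc B u_j$ is supported in the shell $\{0 < \eta < 1\}$ and consists of lower-order derivatives of $u_j$ multiplied by derivatives of $\eta$; these lower-order derivatives converge strongly to zero in $L^q$ by compact Sobolev embedding applied to the bounds on $u_j$ coming from Proposition \ref{prop:hodge}. However, the corresponding contribution to $\int F(v + \mc B(\eta u_j)) \d x$ is only controlled via the $p$-growth of $F$, so one needs equi-integrability of $|v_j|^p$. This is precisely where the strict growth gap $p < q$ enters: the uniform $L^q$-bound on $v_j$ yields equi-integrability of $|v_j|^p$ by de la Vall\'ee Poussin, so the shell contribution can be made arbitrarily small. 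A Fubini-type argument over a continuum of concentric shells of varying thickness then selects one on which the error is negligible. The first part of the theorem ($L^\infty$ setting) is a simpler case of the same scheme, with equi-integrability replaced by the uniform bound.
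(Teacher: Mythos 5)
A preliminary remark: the paper does not prove Theorem \ref{teo:lsc} at all --- it is quoted from Fonseca--M\"uller \cite{Fonseca1999} (see also \cite{Arroyo-Rabasa2018}), whose argument runs through $\mc A$-free Young measures. The closest in-paper relative of your sketch is the elementary proof of Theorem \ref{teo:ourlsc} via Proposition \ref{prop:bogdan11}, and your overall machinery (oscillating test fields for necessity; localization, potentials, cutoffs, and equi-integrability coming from $p<q$ for sufficiency) is of the right kind. Two of your steps, however, are genuine gaps as written.

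First, the reduction to nonnegative integrands by adding $C(|v|^p+1)$ is invalid. If $G=F+C(|\cdot|^p+1)$, lower semicontinuity of $\int G$ only yields $\liminf_j\int_\Omega F(x,v_j)\d x\geq \int_\Omega G(x,v)\d x - C\limsup_j\int_\Omega(|v_j|^p+1)\d x$, and $\limsup_j\int_\Omega|v_j|^p\d x$ can strictly exceed $\int_\Omega|v|^p\d x$ along a weakly (not strongly) convergent sequence, so nothing follows for $F$: the added term is not weakly continuous, which is exactly why signed integrands are delicate. The signed case must be handled directly: the $L^q$-bound makes $|v_j|^p$ equi-integrable, hence the negative part of $F(x,v_j)$ is equi-integrable and the errors committed on small sets (cutoff shells, the complement of the Vitali cover) are uniformly small --- this is the mechanism you invoke later anyway, so the ``reduction'' should be deleted rather than repaired. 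Second, in the necessity direction you apply the assumed lower semicontinuity ``on arbitrary subdomains'', but the hypothesis concerns the functional on $\Omega$ only; for a signed Carath\'eodory integrand the inequality produced by the globally oscillating sequence $z+v(j(x-x_0))$ is an inequality of integrals over all of $\Omega$, which does not localize to the point $x_0$. You need exactly $\mc A$-free competitors whose oscillation is confined to a small ball: write the periodic oscillation as $v=\mc B\phi$ with $\phi$ smooth and periodic (possible under constant rank, via Fourier series and the pseudo-inverse $\mc B^\dagger$) and test with $z+\mc B\bigl(\eta\, j^{-k}\phi(j(\cdot-x_0))\bigr)$ for a cutoff $\eta$, which is admissible because $\mc A\circ\mc B=0$ and whose cutoff errors are $O(1/j)$ in $L^\infty$; shrinking the support of $\eta$ and Lebesgue differentiation then give the quasiconvexity inequality at (almost every) $x_0$. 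A smaller instance of the same issue occurs in your sufficiency step: a generic ``bounded extension'' of $v_j-v$ to $\R^n$ does not satisfy $\Vert \mc A(\cdot)\Vert_{\dot W^{-l,q}(\R^n)}\to 0$, so the claimed bound on $r_j$ is not justified as stated; as in Proposition \ref{prop:bogdan11} one must multiply by a cutoff and check that the commutator terms vanish, which here follows from $v_j-v\to 0$ strongly in $W^{-1,q}_\tp{loc}$ by compact embedding.
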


We remark that, in general, the conclusion of the theorem is false in the critical case $p=q$ unless one assumes additional structure on either the integrand, for instance positivity as done in \cite{Fonseca1999}, or on the sequence, for instance that it does not concentrate on the boundary. A counterexample illustrating this failure was given for $\mc A=\tp{curl}$ and $F=\det$ in \cite[Example 7.1, 7.3]{Ball1984}. We refer the reader to  \cite{Benesova2017} for a detailed discussion of this issue.

The following lemma is well-known and was proved in the $\mc A=\tp{curl}$  case in \cite{Acerbi1984,Marcellini1985}. 

\begin{lema}\label{lema:loclipest}
	Assume $\Lambda$ spans $\mb V$. If $F\colon \mb V \to \R$ is $\Lambda$-convex and satisfies (\ref{eq:Gp}) then
	$$|F(v)-F(w)|\leq C(1+ |v|^{p-1} + |w|^{p-1}) |v-w|$$
	for all $v, w \in \R^d$.
\end{lema}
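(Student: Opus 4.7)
The plan is to reduce the estimate to a one-dimensional convexity bound along segments parallel to a basis of $\mb V$ contained in $\Lambda$. Such a basis exists since $\tp{span}\,\Lambda=\mb V$ and $\Lambda$ is symmetric (being a union of kernels of linear maps $\mc A(\xi)$). Pick unit vectors $\lambda_1,\dots,\lambda_d\in \Lambda$ forming a basis of $\mb V$; for any $v,w\in\mb V$, decompose $w-v=\sum_{i=1}^d t_i\lambda_i$ with $|t_i|\leq C_0|v-w|$, where $C_0$ depends only on the basis. Setting $z_0\equiv v$ and $z_i\equiv z_{i-1}+t_i\lambda_i$, the telescoping
\[
F(w)-F(v)=\sum_{i=1}^d \bigl[F(z_i)-F(z_{i-1})\bigr]
\]
reduces matters to controlling each summand, whose consecutive points differ by a $\Lambda$-direction.

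The core step is a segment estimate: for any $a,b\in\mb V$ with $b-a=t\lambda$, $\lambda\in \Lambda$, $|\lambda|=1$, I claim
\[
|F(b)-F(a)|\leq C(p)\bigl(1+|a|^{p-1}+|b|^{p-1}\bigr)|t|.
\]
Indeed, $h(\tau)\equiv F(a+\tau\lambda)$ is convex in $\tau$ by $\Lambda$-convexity of $F$, and for $t>0$ the monotonicity of convex difference quotients gives
\[
\frac{h(0)-h(-r)}{r}\leq \frac{h(t)-h(0)}{t}\leq \frac{h(t+r)-h(t)}{r}
\]
for any $r>0$. I choose $r=1+|b|$ in the upper bound and $r=1+|a|$ in the lower bound and use the $p$-growth $|h(\tau)|\leq C(1+(|a|+|\tau|)^p)$ inherited from (\ref{eq:Gp}): each numerator is then bounded by $C(1+|a|^p)$ or $C(1+|b|^p)$, and division by $r$, which is of order $1+|a|$ or $1+|b|$, produces the factor $1+|a|^{p-1}+|b|^{p-1}$ after elementary manipulations. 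The case $t<0$ follows by swapping $a$ and $b$.

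To finish, observe that $|z_j|\leq |v|+dC_0|v-w|\leq C(|v|+|w|)$ since $|v-w|\leq |v|+|w|$; hence $|z_j|^{p-1}\leq C(|v|^{p-1}+|w|^{p-1})$ by sub-additivity. Applying the segment estimate to each pair $(z_{i-1},z_i)$, bounding $|t_i|\leq C_0|v-w|$, and summing the $d$ terms yields the desired inequality. The main delicate point is the scale-adaptive choice of $r$ in the one-dimensional step: a fixed choice of $r$ would yield a bound involving $|a|^p$ rather than $|a|^{p-1}$, which is why $r$ must grow like $1+|a|$ or $1+|b|$. Everything else is routine linear algebra and telescoping.
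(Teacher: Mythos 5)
Your proof is correct, but it takes a different route from the paper's. The paper disposes of the lemma in two lines by quoting the known local Lipschitz estimate for $\Lambda$-convex functions (Kirchheim--Kristensen, Lemma 2.3 of \cite{Kirchheim2016}): for $v,w\in B_r(0)$ one has $|F(v)-F(w)|\leq \frac{C}{r}\,\tp{osc}(F,B_{2r})\,|v-w|$ with $C$ depending only on $\Lambda$, and then the growth condition (\ref{eq:Gp}) applied on $B_{2r}$ with $r\simeq 1+|v|+|w|$ immediately yields the factor $1+|v|^{p-1}+|w|^{p-1}$. You instead give a self-contained argument: choose a basis of $\mb V$ inside $\Lambda$ (only the spanning condition is needed for this; the symmetry remark is superfluous), telescope along the corresponding segments, and control each increment by the monotonicity of difference quotients of the convex one-dimensional restriction, with the scale-adaptive choices $r=1+|a|$ and $r=1+|b|$ doing exactly the job that the choice of ball radius does in the paper. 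Your segment estimate and the bound $|z_j|\leq C(|v|+|w|)$ for the intermediate points check out, so the argument is complete. What your route buys is independence from the cited lemma --- you are in effect reproving the relevant quantitative Lipschitz bound along $\Lambda$-directions --- at the cost of a longer argument; the paper's route is shorter but black-boxes the oscillation estimate.
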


\begin{proof}
	By the spanning condition, $F$ is Lipschitz and, for $v, w \in B_r(0)\subset \mb V$,
	$$|F(v)-F(w)|\leq \frac{C}{r}\,\tp{osc}(F, B_{2r})
	|v-w|,$$
	where $C$ depends only on $\Lambda$; see \cite[Lemma 2.3]{Kirchheim2016}. Using (\ref{eq:Gp}) and the triangle inequality, we get
	$$|F(v)-F(w)|\leq C \left(1+\frac{|v|^p}{r}+\frac{|w|^p}{r} \right)|v-w|\leq C(1+ |v|^{p-1}+|w|^{p-1})|v-w|$$
	where we also assumed without loss of generality that $r\geq 1$.
\end{proof}

We are now ready to begin the proof of the main result of this section. Recall that we always assume (\ref{eq:assumption}). The next proposition, although relatively simple, is a crucial ingredient in the proof of Theorem \ref{teo:ourlsc} below. The point is that when a weakly convergent sequence does not concentrate on the boundary it can be replaced by a sequence of potentials.

\begin{prop}\label{prop:bogdan11}
	Let $\Omega$ be a bounded domain. 
	Let $v_j, v \in L^p(\Omega, \mb V)$ be such that
	$$v_j \w v \tp{ in }L^p(\Omega,\mb V),\hs 
	\mc A v_j \to \mc A v \tp{ in } W^{-l, p}_\tp{loc}(\Omega, \mb V)
	$$
	and moreover let $\lambda$ be such that
	$|v_j|^p\wstar \lambda$ in $\mc M(\overline \Omega)$.
	%$v_j \xrightarrow{Y^p} ((\nu_x)_{x\in \Omega}, \lambda,(\nu_x)_{x \in \overline \Omega})$. 
	Assume that $\lambda(\p \Omega)=0$.
	Up to passing to subsequences in $(v_j)$, there is a sequence $u_j \in C^\infty_c(\Omega, \mb U)$ such that 
	$$v_j-v-\mc B u_j \to 0\tp{ in } L^p(\Omega,\mb V). $$
\end{prop}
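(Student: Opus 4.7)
The plan is to localize $w_j:=v_j-v$ away from $\partial\Omega$ using the hypothesis $\lambda(\partial\Omega)=0$, apply the Helmholtz--Hodge decomposition of Proposition~\ref{prop:hodge} to the localized sequence on $\R^n$, and exploit the Rellich compactness $L^p(\Omega)\hookrightarrow W^{-1,p}(\Omega)$ to show that the $\mc A^*$-component vanishes in the limit.

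First, $w_j \w 0$ in $L^p(\Omega,\mb V)$ and $\mc A w_j\to 0$ in $W^{-l,p}_\tp{loc}(\Omega,\mb V)$. The pointwise bound $|w_j|^p\leq 2^{p-1}(|v_j|^p+|v|^p)$ combined with $\lambda(\partial\Omega)=0$ gives, along a subsequence, $|w_j|^p \wstar \mu$ in $\mc M(\overline\Omega)$ with $\mu(\partial\Omega)=0$. Choose cut-offs $\chi_\epsilon\in C^\infty_c(\Omega)$, $0\leq \chi_\epsilon\leq 1$, equal to $1$ on $\{\tp{dist}(\cdot,\partial\Omega)>\epsilon\}$, so that $\chi_\epsilon\to 1$ $\mu$-a.e. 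Testing the weak-$*$ convergence against the continuous function $(1-\chi_\epsilon)^p$ yields
$$\lim_{\epsilon\to 0}\,\limsup_{j\to\infty}\|(1-\chi_\epsilon)w_j\|_{L^p(\Omega)}^p = \lim_{\epsilon\to 0}\int_\Omega(1-\chi_\epsilon)^p\d\mu = 0.$$
It therefore suffices, for each fixed $\epsilon$, to approximate $\chi_\epsilon w_j$ in $L^p(\Omega)$ by $\mc B u_j^\epsilon$ with $u_j^\epsilon\in C^\infty_c(\Omega,\mb U)$.

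Extend $\chi_\epsilon w_j$ by zero to $\R^n$ and apply Proposition~\ref{prop:hodge} to write
$$\chi_\epsilon w_j = \mc B U_j^\epsilon + \mc A^* W_j^\epsilon,\hs \|\mc A^* W_j^\epsilon\|_{L^p(\R^n)}\leq C\|\mc A(\chi_\epsilon w_j)\|_{\dot W^{-l,p}(\R^n)}.$$
Expand $\mc A(\chi_\epsilon w_j)=\chi_\epsilon \mc A w_j+[\mc A,\chi_\epsilon]w_j$. The first term tends to $0$ in $W^{-l,p}(\R^n)$ since $\chi_\epsilon$ is compactly supported in $\Omega$ and $\mc A w_j\to 0$ locally, while the commutator $[\mc A,\chi_\epsilon]$ is a differential operator of order $l-1$ with smooth compactly supported coefficients, so by duality
$$\|[\mc A,\chi_\epsilon]w_j\|_{W^{-l,p}(\R^n)}\leq C(\chi_\epsilon)\|w_j\|_{W^{-1,p}(\Omega)}.$$
The embedding $L^p(\Omega)\hookrightarrow W^{-1,p}(\Omega)$ is compact (by duality from Rellich--Kondrachov), so after passing to a further subsequence $\|w_j\|_{W^{-1,p}(\Omega)}\to 0$; because $\chi_\epsilon w_j$ is supported in a fixed compact set, $W^{-l,p}$ and $\dot W^{-l,p}$ norms are equivalent on these distributions, and we conclude $\mc B U_j^\epsilon\to \chi_\epsilon w_j$ in $L^p(\R^n)$ as $j\to\infty$ for each fixed $\epsilon$.

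It remains to replace the potential $U_j^\epsilon\in \dot W^{k,p}(\R^n,\mb U)$ by $u_j^\epsilon\in C^\infty_c(\Omega,\mb U)$ with $\mc B u_j^\epsilon\to \mc B U_j^\epsilon$ in $L^p(\Omega)$. Pick $\eta_\epsilon\in C^\infty_c(\Omega)$ equal to $1$ on a neighborhood of $\supp \chi_\epsilon$, set $u_j^\epsilon:=\eta_\epsilon U_j^\epsilon$ (mollified for smoothness), and note that $\mc B u_j^\epsilon=\mc B U_j^\epsilon$ on $\{\eta_\epsilon=1\}\supset\supp\chi_\epsilon$. On the annular region $\{\eta_\epsilon<1\}$ the field $\chi_\epsilon w_j$ vanishes and $\mc B U_j^\epsilon\to 0$ in $L^p$; the remaining commutator $[\mc B,\eta_\epsilon]U_j^\epsilon$, involving lower-order derivatives of $U_j^\epsilon$, is absorbed after subtracting from $U_j^\epsilon$ a suitable element of $\mb I_{\mc B}$ and invoking a Poincaré-type inequality for the constant rank operator $\mc B$ on the annulus. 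A diagonal extraction in $(\epsilon,j)$ produces the desired sequence. The main obstacle is precisely this final step: transferring a potential from $\R^n$ back into $C^\infty_c(\Omega,\mb U)$ forces one to control commutators of $\mc B$ with cut-offs, for which the Helmholtz--Hodge decomposition alone is insufficient and must be complemented by a Poincaré-type bound for $\mc B$ on the boundary layer near $\partial\Omega$.
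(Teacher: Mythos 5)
Up to its last step, your argument is essentially the paper's own proof: you truncate $w_j=v_j-v$ inside $\Omega$ using $\lambda(\p\Omega)=0$, apply the whole-space Helmholtz--Hodge decomposition of Proposition~\ref{prop:hodge} to the truncated field, and make the $\mc A^*$-component vanish by splitting $\mc A(\chi_\varepsilon w_j)=\chi_\varepsilon\mc Aw_j+[\mc A,\chi_\varepsilon]w_j$ and using the compact embedding $L^p(\Omega)\hookrightarrow W^{-1,p}(\Omega)$ on the commutator; the paper does exactly this (with a cut-off $\eta$ and a diagonal choice of cut-offs in place of your $\chi_\varepsilon$). One caveat: your claim that the $W^{-l,p}$ and $\dot W^{-l,p}$ norms are equivalent on distributions with fixed compact support is false in general (there is a low-frequency obstruction when $p$ is close to $1$), so this piece of bookkeeping needs more care; this technicality is, however, shared with the paper's write-up and is not the main issue.

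The genuine gap is the step you yourself flag: converting the global potential $U_j^\varepsilon\in\dot W^{k,p}(\R^n,\mb U)$ into a potential in $C^\infty_c(\Omega,\mb U)$. The patch you sketch cannot work as stated. Subtracting an element of $\mb I_{\mc B}$ is useless here: $\mb I_{\mc B}$ consists of constant vectors annihilated by every $\mc B(\xi)$, and subtracting a constant leaves all the derivatives $\D^{k-i}U_j^\varepsilon$, $1\le i\le k-1$, appearing in $[\mc B,\eta_\varepsilon]U_j^\varepsilon$ untouched. Moreover, the ``Poincar\'e-type inequality for $\mc B$ on the annulus'' that would control $U_j^\varepsilon$ modulo the nullspace of the differential operator $\mc B$ by $\Vert\mc B U_j^\varepsilon\Vert_{L^p}$ is neither proved in the paper nor available for a general constant-rank potential, whose nullspace need not be finite-dimensional; it is an estimate of the same flavour as the bounded-domain decomposition left open in Question~\ref{qu:helmholtz}. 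What actually closes the step is more elementary and uses only tools already in the paper: by the multiplier formula in the proof of Proposition~\ref{prop:hodge}, $U_j^\varepsilon$ depends on $\chi_\varepsilon w_j$ through a fixed bounded linear operator, so $\D^kU_j^\varepsilon\w0$ in $L^p(\R^n)$; since $\mc B$ is homogeneous of order $k$ it annihilates every polynomial of degree at most $k-1$, so after subtracting such a polynomial $\pi_j$ (normalizing the averages of $\D^m(U_j^\varepsilon-\pi_j)$, $m\le k-1$, on a ball containing $\supp\D\eta_\varepsilon$) the classical Poincar\'e inequality and Rellich--Kondrachov give $\D^{k-i}(U_j^\varepsilon-\pi_j)\to0$ strongly in $L^p_{\tp{loc}}$ for $i\ge1$, which kills the commutator $[\mc B,\eta_\varepsilon](U_j^\varepsilon-\pi_j)$; finally, on $\{\eta_\varepsilon<1\}$ one has $\mc B U_j^\varepsilon=-\mc A^*W_j^\varepsilon\to0$ strongly, as you observed. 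With these replacements your truncation $u_j^\varepsilon\equiv\eta_\varepsilon(U_j^\varepsilon-\pi_j)$ (mollified) works and the diagonal extraction finishes the proof. You did locate the delicate spot correctly---the paper itself is very terse exactly there, asserting that $u_j$ lies in $\mathscr W^{\mc B,p}(\Omega)$ and approximating---but as written your proposal declares this step an obstacle rather than closing it, and the tools it invokes (an $\mb I_{\mc B}$-correction and a Poincar\'e inequality for $\mc B$) are the wrong ones.
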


\begin{proof}
	By linearity we may assume that $v=0$. Let $U\Subset V \Subset \Omega$ to be determined later and take $\eta \in C^\infty_c(\Omega)$ with $1_{U}\leq \eta\leq 1_{V}$ and $|\D^m \eta|\leq 2d^{-m}$ for $m=1, \dots, k$; here
	$d\equiv \tp{dist}(U, \p V)$. Write, using the Helmholtz-Hodge decomposition of Proposition \ref{prop:hodge}, 
	$$\widetilde v_j \equiv  \eta v_j, \hs \widetilde v_j = \mc B u_j + w_j,$$
	where we have extended $\widetilde v_j$ by zero outside $\Omega$ so that it is in $L^p(\R^n, \mb V)$. Moreover, we have 
	$$\Vert v_j - \mc B u_j \Vert_{L^p(\Omega)} \leq \Vert v_j - \widetilde v_j \Vert_{L^p(\Omega)} + \Vert \widetilde v_j - \mc B u_j \Vert_{L^p(\Omega)} \lesssim  \Vert v_j - \widetilde v_j \Vert_{L^p(\Omega)}+\Vert \mc A \widetilde v_j \Vert_{W^{-l,p}(\Omega)}.$$
	Let us estimate the first term: since $\lambda$ is a positive measure,
	$$\lim_{j\to \infty} \Vert (1-\eta) v_j \Vert_{L^p(\Omega)} = \int_{\overline\Omega} (1-\eta)^p \d \lambda\leq \lambda(\overline \Omega\exc U).$$
	Taking $U\uparrow \Omega$ the left-hand side goes to zero by the dominated convergence theorem, since $\lambda(\p \Omega)=0$. 
	For the second term, we have
	\begin{align*}
	\Vert \mc A(\eta v_j)\Vert_{W^{-l,p}(\Omega)} &\leq \Vert
	\eta \mc A v_j \Vert_{W^{-l,p}(V)} + \sum_{i=1}^k
	\Vert B_i[\D^i \eta, \D^{k-i} v_j]\Vert_{W^{-l,p}(V)}
	\end{align*}
	where the $B_i$ are fixed bilinear pairings given by the chain rule.
	For the first term note that, up to taking subsequences in $v_j$ if necessary, we can assume that
	$$\Vert
	\eta \mc A v_j \Vert_{W^{-l,p}(V)} \leq \frac 1 j$$
	by our hypothesis.
	The second term can be bounded by
	$$
	\Vert B_i[\D^i \eta, \D^{k-i} v_j]\Vert_{W^{-l,p}(V)}\lesssim \frac{\Vert  \D^{k-i}v_j\Vert_{W^{-l,p}(V)}}{d^i}\lesssim 
	\frac{\Vert v_j\Vert_{L^p(V)}}{d^i}.
	$$
	Thus, picking $U,V\uparrow \Omega$ such that $d$ approaches zero sufficiently slowly, this term also goes to zero. This finishes the proof: although $u_j$ is only in $\mathscr W^{\mc B,p}(\Omega)$, by definition of this Sobolev space there are $\widetilde u_j \in C^\infty_c(\Omega, \mb U)$ with $\Vert \mc B(u_j-\widetilde u_j)\Vert_p \to 0$.
\end{proof}

We proceed to the proof of the main result of this section; it is inspired by standard lower semicontinuity proofs in the gradient case \cite{Acerbi1984,Chen2017,Kristensen1999b,Marcellini1985,Meyers1965,Morrey1952}.

\begin{teo}\label{teo:ourlsc}
	Let $\Omega\subset \R^n$ be a bounded domain.
	If $F\colon \mb V\to \R$ is $\mc A$-quasiconvex and satisfies 
	(\ref{eq:Gp}) then, whenever 
	$$v_j \w v \tp{ in }L^p(\Omega,\mb V),\hs 
	\mc A v_j \to \mc A v \tp{ in } W^{-l, p}_\tp{loc}(\Omega, \mb V),
	$$ for all $\rho \in C_c^\infty(\Omega)$ with $\rho\geq 0$ we have
	$$\liminf_{j\to \infty} \int_\Omega	 \rho F(v_j) \d x \geq \int_\Omega \rho F(v) \d x.$$ 
\end{teo}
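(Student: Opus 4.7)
The plan is to localize via a finite cover of $\tp{supp}\,\rho$ by small balls on which $\rho$ and $v$ are nearly constant, apply Proposition~\ref{prop:bogdan11} on each ball to replace $v_j$ by $v(x_i)+\mc B u_j^i$ for test fields $u_j^i\in C^\infty_c(B_i,\mb U)$, and then invoke the $\mc B$-quasiconvexity formulation of Corollary~\ref{cor:Bqc}. The main obstacle is that Proposition~\ref{prop:bogdan11} requires a weak-$*$ limit $\lambda$ of $|v_j|^p\,\d x$ not to charge the boundaries of the localizing balls, so these must be chosen $\lambda$-generically.

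First, pass to a liminf-achieving subsequence and then extract a further weak-$*$ limit $|v_j|^p\,\d x\wstar\lambda\in\mc M(\overline\Omega)$, using that $(v_j)$ is $L^p$-bounded by Banach--Steinhaus. Fix $\e>0$. A Vitali-type covering argument, combined with Lebesgue's differentiation theorem and the fact that at most countably many concentric spheres around any given point can carry positive $\lambda$-mass, yields a finite disjoint family of balls $B_i=B_{r_i}(x_i)\Subset\Omega$ satisfying $\lambda(\p B_i)=0$, $\tp{osc}(\rho,B_i)<\e$, $\fint_{B_i}|v-v(x_i)|^p\,\d x<\e$ (with $x_i$ an $L^p$-Lebesgue point of $v$), and $|\tp{supp}\,\rho\setminus\bigcup_iB_i|<\e$. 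Applying Proposition~\ref{prop:bogdan11} successively on each $B_i$ and diagonalizing produces $u_j^i\in C^\infty_c(B_i,\mb U)$ with $v_j-v-\mc B u_j^i\to0$ in $L^p(B_i)$.

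Set $w_j^i\equiv v(x_i)+\mc B u_j^i$, so $\Vert v_j-w_j^i\Vert_{L^p(B_i)}\leq\e^{1/p}|B_i|^{1/p}+o_j(1)$. The local Lipschitz estimate of Lemma~\ref{lema:loclipest} (valid since $\tp{span}\,\Lambda_{\mc A}=\mb V$), together with H\"older's inequality, gives
$$
\left|\int_{B_i}\bigl[F(v_j)-F(w_j^i)\bigr]\,\d x\right|\leq C\bigl(|B_i|^{(p-1)/p}+\Vert v_j\Vert_{L^p(B_i)}^{p-1}+\Vert w_j^i\Vert_{L^p(B_i)}^{p-1}\bigr)\Vert v_j-w_j^i\Vert_{L^p(B_i)},
$$
and the same bound controls $|B_i|F(v(x_i))-\int_{B_i}F(v)\,\d x$. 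By discrete H\"older, these local errors sum over $i$ to $O(\e^{1/p})$, with implied constant depending only on $|\Omega|$, $\sup_j\Vert v_j\Vert_{L^p}$, and $\Vert v\Vert_{L^p}$. The key inequality is $\int_{B_i}F(w_j^i)\,\d x\geq|B_i|F(v(x_i))$ from Corollary~\ref{cor:Bqc}. Chaining the three bounds yields $\liminf_j\int_{B_i}F(v_j)\,\d x\geq\int_{B_i}F(v)\,\d x-\eta_i$ with $\sum_i\eta_i=O(\e^{1/p})$.

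To restore the weight $\rho$, write $\rho=\rho(x_i)+\delta_i$ on $B_i$ with $\Vert\delta_i\Vert_\infty<\e$; since $\rho(x_i)\geq 0$ the previous inequality can be multiplied by $\rho(x_i)$ without changing direction, while the contributions of $\delta_i F(v_j)$ and $\delta_i F(v)$ are both $O(\e)$ via \eqref{eq:Gp} and the uniform $L^p$-bound on $(v_j)$. The leftover region $\tp{supp}\,\rho\setminus\bigcup B_i$ has Lebesgue measure less than $\e$ and is controlled via \eqref{eq:Gp} together with equi-integrability of $|v_j|^p$ (which follows from $|v_j|^p\wstar\lambda$ and genericity of the covering). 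Assembling everything gives
$$
\liminf_{j\to\infty}\int_\Omega\rho F(v_j)\,\d x\geq\int_\Omega\rho F(v)\,\d x-C\e^{1/p},
$$
and letting $\e\to 0$ concludes. The most delicate point is coordinating the covering conditions simultaneously, but this is standard once Proposition~\ref{prop:bogdan11} is available to absorb boundary concentrations.
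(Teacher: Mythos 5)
Your overall architecture (localize, replace $v_j$ by $v+\mc B u_j$ via Proposition \ref{prop:bogdan11} on pieces whose boundaries are $\lambda$-null, freeze $\rho$ and $v$ on each piece, apply Corollary \ref{cor:Bqc} and the Lipschitz estimate of Lemma \ref{lema:loclipest}) is the same as the paper's, but there is a genuine gap in how you handle the region not covered by your balls. You cover $\tp{supp}\,\rho$ by a \emph{finite disjoint} family of balls up to a set $S$ of Lebesgue measure $<\e$, and then claim that $\int_S \rho\,F(v_j)$ is negligible using ``equi-integrability of $|v_j|^p$, which follows from $|v_j|^p\wstar\lambda$''. This is false: weak-$*$ convergence of the measures $|v_j|^p\,\d x$ does not give equi-integrability -- concentration is exactly what it permits, and handling interior concentration is the whole point of the theorem. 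Since $F$ is signed with only the bound \eqref{eq:Gp}, you need $\limsup_j\int_S|v_j|^p\,\d x$ to be small, i.e.\ $\lambda(S)$ small; but your covering controls only the \emph{Lebesgue} measure of $S$, and finitely many disjoint balls necessarily leave gaps in which $\lambda$ can carry mass bounded away from zero (e.g.\ $v_j$ concentrating at a point, or on a lower-dimensional set, lying between the balls, where moreover $v$ need not have a Lebesgue point, so you cannot simply re-center a ball there). Thus the term $\liminf_j\int_S\rho\,F(v_j)\,\d x$ can be $\le -c<0$ uniformly in $\e$, and the final inequality does not follow.

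The paper avoids this by never producing a leftover region: instead of disjoint balls it uses a triangulation $\mc T$ that \emph{tiles} a neighbourhood of $\Omega$, translated by a generic vector $te$ so that $\lambda(\p T)=0$ for every simplex $T$ (only countably many translates are bad), and it approximates $v$ globally in $L^p$ by a function $a$ that is constant on each simplex (rather than by pointwise values $v(x_i)$ at Lebesgue points). Then every piece is admissible for Proposition \ref{prop:bogdan11}, the errors $\tp{I}+\tp{III}$ are $O(\e)$ exactly as in your Lipschitz computation, and there is no set $S$ on which concentration could hide. If you replace your Vitali-type ball cover by such a generically translated tiling (simplices or cubes) and the Lebesgue-point approximation by a piecewise-constant $L^p$-approximation of $v$, the rest of your argument goes through essentially verbatim.
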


\begin{proof}
	By taking a subsequence, we can assume that 
	$|v_j|^p \wstar \lambda$ in $\mc M(\Omega)$. Let us also fix $\rho \in C^\infty_c(\Omega)$ with $\rho\geq 0$ and $\e\in(0,1)$.
	
	\textbf{Step 1:} We can find $\widetilde v\in C^\infty_c(\Omega, \mb V)$ such that $\Vert v - \widetilde v \Vert_{p}<\e$. Let us also take $\delta\in(0,1)$ such that, given any 
	triangulation $\widetilde {\mc T}$ of $\R^n$ with $\sup_{T\in \widetilde {\mc T}} \tp{diam} \,T <\delta$, we can find a function $a$, constant in each $T\in \widetilde{\mc T}$, with the bound 
	$\Vert \widetilde v - a\Vert_{L^p(\Omega)}<\e$. In particular, $a$ satisfies
	\begin{equation}
	\label{eq:a}
	\Vert a\Vert_{L^p(\Omega)} \leq 2 \e + \Vert v\Vert_{L^p(\Omega)} <2+\Vert v \Vert_{L^p(\Omega)}.
	\end{equation}
	
	We need to wiggle the triangulation sightly so that Proposition \ref{prop:bogdan11} becomes applicable. For this, let $\mc T_\Omega\equiv \{T \in  \widetilde{\mc T}: T \cap B_2(\Omega)\neq \vazio\}$.
	Take a direction $e\in \mb S^{n-1}$ which is not tangent to any face of any simplex $T \in \mc  T_\Omega$. Then, given a face $\sigma$ of $T$,  the sets $t e + \sigma$, for $t\in (0,\delta)$, are disjoint. This shows that the set
	$$\{t \in (0,\delta): \lambda(t e +\sigma)>0\}$$
	is at most countable and hence so is the set
	$$E\equiv \bigcup_{T\in \mc T_\Omega} \{t \in (0,\delta): \lambda(t e + \p T)>0\}.$$
	Select $t\in (0,\delta)\exc E$ and define the final triangulation $\mc T\equiv t e + \mc T_\Omega$, which contains $B_1(\Omega)$. Choose $a$ to be constant in each $T\in \mc T$ and satisfy (\ref{eq:a}).
	
	\textbf{Step 2:}
	Let us write $w_j \equiv a+ v_j -v \in L^p(\Omega,\mb V)$; then
	\begin{align*}
	\int_\Omega \rho(F(v_j)-F(v))\d x = 
	\int_\Omega \rho (F(v_j)-&F(w_j)) \d x + \int_\Omega \rho (F(w_j)-F(a)) \d x \\ & + \int_\Omega \rho(F(a)-F(v)) \d x \equiv \tp{I} + \tp{II} + \tp{III}.
	\end{align*}
	Using the local Lipschitz estimate of Lemma \ref{lema:loclipest}, we get
	\begin{align*}
	|\tp{I}+\tp{III}|&\lesssim \int_\Omega \rho (1+|v_j|^{p-1} + |w_j|^{p-1}) |v_j-w_j| \d x +
	\int_\Omega \rho (1+|v|^{p-1} + |a|^{p-1}) |v-a| \d x \\
	& \leq \max \rho \int_\Omega (1+ |v_j|^{p-1} + 2^p |v_j|^{p-1} + 2^p |v-a|^{p-1})|v-a| \d x  \\
	&  \hs +\max \rho \left(\int_\Omega (1+ |v|^{p-1} + |a|^{p-1})^\frac{p}{p-1} \d x \right)^\frac{p-1}{p} \left(\int_\Omega |v-a|^p\right)^\frac 1 p
	\end{align*}
	Thus, from (\ref{eq:a}) and using H\"older again for the first term, we find that
	$$|\tp{I}+\tp{III}|\leq C\left(1+\Vert v \Vert^{p-1}_p + \sup_j \Vert v_j\Vert_p^{p-1}\right)\e=O(\e)$$
	where $C$ now also depends on $\rho$. To summarize, we have $w_j\w a$ in $L^p(\Omega, \mb V)$ and we have shown that
	\begin{equation}
	\label{eq:step1}
	\liminf_{j\to \infty} \int_\Omega \rho (F(v_j) - F(v)) \d x = O(\e) + \liminf_{j\to \infty} \int_\Omega \rho (F(w_j) -F(a))\d x.
	\end{equation}
	
	\textbf{Step 3:} Since $\mc T$ triangulates $\Omega$ we have
	\begin{equation}
	\int_\Omega \rho(F(w_j)-F(a)) \d x = \sum_{T\in \mc T} \int_{T\cap \Omega} \rho(F(w_j)-F(a)) \d x.\label{eq:triang}
	\end{equation}
	Using Proposition \ref{prop:bogdan11},
	take for each $T\in \mc T$ a sequence $u_{j,T}\equiv u_{j}\in C^ \infty_c(T, \mb V)$ such that $w_j -a - \mc B u_{j}\to 0$ in $L^p(T,\mb V)$. By Lemma \ref{lema:loclipest}, 
	$$\int_T F(w_j) -F(a+\mc B  u_j)\d x \to 0$$
	and since $F$ is $\mc A$-quasiconvex, from Corollary \ref{cor:Bqc},
	$$\int_T F(a+\mc B u_j) - F(a) \d x \geq 0.$$
	Putting these together, we have shown that
	\begin{equation}
	\liminf_{j\to \infty} \int_T F(w_j)-F(a) \d x \geq 0.\label{eq:liminfw}
	\end{equation}
	Take for each $T \in \mc T$ a point $x_T\in \mc T$ and note that, from (\ref{eq:triang}),
	\begin{align*}
	&\int_\Omega \rho(F(w_j)-F(a)) \d x =\\
	&\hspace{0.2cm} =
	\sum_{T \in \mc T} \rho(x_T) \int_{T\cap \Omega} F(w_j)-F(a) \d x  + \int_{T\cap \Omega} (\rho-\rho(x_T))(F(w_j)-F(a)) \d x \\
	&\hspace{0.2cm} \geq \sum_{T\in \mc T} \rho(x_T) \int_{T\cap \Omega} F(w_j) -F(a)\d x - \max_{T\in \mc T} \tp{diam}\,\rho(T) \int_{\Omega} C (1+|w_j|^{p-1} +|a|^{p-1}) |w_j-a|\d x.
	\end{align*}
	To bound the first term we use (\ref{eq:liminfw}) and to bound the second we recall that $w_j-a=v_j-v$ and use the estimate (\ref{eq:a}) for $a$:
	$$\liminf_{j\to \infty} \int_\Omega \rho(F(w_j)-F(a)) \d x \geq
	- C\max_{T\in \mc T} \tp{diam}\,\rho(T) 
	\left[
	\int_{\Omega}  1+|v|^p \d x+\sup_j \int_{\Omega}  |v_j|^p \d x\right].$$
	Since $\rho$ has compact support it is uniformly continuous and since $\tp{diam}\, T<\delta$ for $T\in \mc T$ we have that $\max_{T\in \mc T} \tp{diam}\,\rho(T)\to 0 $ as $\delta\to 0$. Finally, using (\ref{eq:step1}) and sending $\e\to 0$ the conclusion follows.
\end{proof}

The above proof can be easily adapted to the case where we do not assume that $\rho$ has compact support, instead assuming that the negative part of the integrand has $q$-growth for $q<p$, see e.g. the proofs in \cite{Dacorogna2007,Marcellini1985}. This recovers the second case of Theorem \ref{teo:lsc} above.

\section{Null Lagrangians and weak continuity}\label{sec:nullLags}

We begin by recording the following definition:

\begin{ndef}
	Given a $C^1$ integrand $F\colon \mb V\to \R$, we say that it is an $\mc A$-\textbf{null Lagrangian} if it satisfies, in the sense of distributions,
	\begin{equation}
	\mc B^* \left(\D F(\mc B u)\right)=0, \label{eq:el}
	\end{equation}
	for all $u\in C^k(\overline \Omega, \mb U)$. When the choice of $\mc A$ is implicit from the context we refer to such integrands simply as null Lagrangians.
\end{ndef}

We remark that one can also consider null Lagrangians depending on lower order terms, as in \cite{Olver1988}, but we shall not pursue this here.

Having Theorem \ref{teo:ourlsc} at our disposal, we can give a first
abstract characterization of $\mc A$-quasiaffine maps under the main
assumption (\ref{eq:assumption}); this will be improved in the next
section and quantified in Section \ref{sec:estimates}.
The following proposition is modelled on  \cite[Theorem 3.4]{Ball1981}.

\begin{prop}\label{prop:abstractnl}
	Let $F\colon \mb V \to \R$ be locally bounded and Borel and let $\Omega$ be  a bounded domain. The following are equivalent:
	\begin{enumerate}
		\item \label{it:A-qa}$F$ is $\mc A$-quasiaffine;
		\item \label{it:Piola}$F$ is an $\mc A$-null Lagrangian;
		\item \label{it:weak*cts}$F\colon L^{\infty}_{\mc A}(\Omega)\to L^\infty(\Omega)$ is sequentially weakly-$*$ continuous;
		\item \label{it:improved} $F$ is a polynomial of degree $s\leq \min\{n,\dim \mb V\}$ and 
		$$\begin{rcases}
		v_j \w v & \tp{in } L^s(\Omega,\mb V)\\
		\mc A v_j \to \mc A v & \tp{in } W^{-l,s}_\tp{loc}(\Omega,\mb V)
		\end{rcases}\hs \implies \hs F(v_j)\wstar F(v) \tp{ in } \mathscr D'(\Omega).$$
		
	\end{enumerate}
\end{prop}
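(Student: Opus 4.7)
The plan is to establish the equivalence via the cyclic chain $(c)\Rightarrow(a)\Rightarrow(d)\Rightarrow(c)$, and to handle $(a)\Leftrightarrow(b)$ separately once the polynomial structure from $(d)$ is in hand. The direction $(d)\Rightarrow(c)$ is immediate: if $v_j\wstar v$ in $L^\infty_{\mc A}(\Omega)$ then $v_j\w v$ in $L^s(\Omega,\mb V)$ with $\mc Av_j=\mc Av=0$, so $(d)$ yields $F(v_j)\wstar F(v)$ in $\mathscr D'(\Omega)$; the polynomial growth of $F$ together with the $L^\infty$-bound on $(v_j)$ upgrades this to weak-$*$ convergence in $L^\infty$. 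For $(c)\Rightarrow(a)$ I plan to use the oscillation method: given $z\in\mb V$ and mean-zero $u\in C^\infty_{\tp{per}}([0,1]^n,\mb V)$ with $\mc Au=0$, the sequence $v_j(x)\equiv z+u(jx)$ satisfies $v_j\wstar z$ in $L^\infty_{\mc A}$ by Riemann--Lebesgue, so $F(v_j)\wstar F(z)$ by $(c)$. A change of variables and periodicity give $\int_{[0,1]^n} F(v_j)\,\d x=\int_{[0,1]^n} F(z+u(y))\,\d y$ for every $j$, so passing to the limit yields $\int_{[0,1]^n}F(z+u)\,\d y=F(z)$, which is exactly the $\mc A$-quasiaffineness condition of Definition~\ref{def:A-qc}.

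The heart of the argument is $(a)\Rightarrow(d)$. By Corollary~\ref{cor:Bqc}, $(a)$ is equivalent to the identity $\int_\Omega F(z+\mc Bu)\,\d x=F(z)|\Omega|$ for all $z\in\mb V$ and $u\in C^\infty_c(\Omega,\mb U)$. Factoring $\mc B=T\circ\D^k$ via \eqref{eq:C}, the composition $G\equiv F\circ T$ on $\odot^k(\R^n,\mb U)$ is $\D^k$-quasiaffine in the classical sense, so by the Ball--Currie--Olver theorem \cite{Ball1981} $G$ is a polynomial of bounded degree. The spanning assumption $\tp{span}\,\Lambda_\mc A=\mb V$ together with Theorem~\ref{teo:b} forces $T$ to be surjective onto $\mb V$ (as $\Lambda_\mc A\subset\tp{im}\,T$), so $F$ descends from $G$ via any right inverse of $T$, inheriting the polynomial structure. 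The degree bound $\dim\mb V$ will follow from a multilinearity argument using a basis of $\mb V$ contained in $\Lambda_\mc A$ (along which $F$ is affine, since $\mc A$-quasiaffine functions are $\Lambda_\mc A$-affine), while the bound $n$ comes from Ball--Currie--Olver, combining to $s\leq\min\{n,\dim\mb V\}$. With $F$ now a polynomial of degree $s$ satisfying $s$-growth, and with both $F$ and $-F$ being $\mc A$-quasiconvex, I apply Theorem~\ref{teo:ourlsc} to each to get matching $\liminf$ and $\limsup$ bounds, whence $\int\varphi F(v_j)\,\d x\to\int\varphi F(v)\,\d x$ for all non-negative $\varphi\in C^\infty_c(\Omega)$ and, by linearity, for all test functions.

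For $(a)\Leftrightarrow(b)$, the polynomial structure from $(d)$ makes $F$ smooth. For $(a)\Rightarrow(b)$ I will fix $u_0\in C^k(\overline\Omega,\mb U)$ and $\phi\in C^\infty_c(\Omega,\mb U)$ and use a localization in the spirit of the proof of Theorem~\ref{teo:ourlsc} (triangulating $\Omega$ into small simplices and approximating $\mc Bu_0$ by piecewise constants to which the quasiaffine identity of Corollary~\ref{cor:Bqc} applies) to show that the map $s\mapsto\int_\Omega F(\mc Bu_0+s\mc B\phi)\,\d x$ is constant in $s$; differentiating at $s=0$ then yields $\int_\Omega\langle\D F(\mc Bu_0),\mc B\phi\rangle\,\d x=0$, which is the distributional equation $\mc B^*(\D F(\mc Bu_0))=0$. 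Conversely, $(b)\Rightarrow(a)$ will follow by integrating the EL equation along the affine path $s\mapsto z+s\mc Bu$: the fundamental theorem of calculus gives $\int_\Omega F(z+\mc Bu)-F(z)\,\d x=\int_0^1\int_\Omega\langle\D F(z+s\mc Bu),\mc Bu\rangle\,\d x\,\d s$, and each inner integral vanishes by integration by parts using compact support of $u$ combined with $(b)$ applied at $z+s\mc Bu=\mc B(w+su)$, where $w$ is any polynomial with $\mc Bw=z$.

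The hardest step will be the polynomial degree bound $n$ via Ball--Currie--Olver: their classification involves a subtle jet-level induction, and its adaptation to the constant rank setting via the potential $\mc B$ needs care, as does the clean extraction of the sharp bound $\min\{n,\dim\mb V\}$ from the separate $\mb U$-level bounds coming out of the reduction.
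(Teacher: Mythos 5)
Most of your chain is sound and, in places, genuinely different from the paper: you obtain (c)$\Rightarrow$(a) by the oscillation argument $v_j(x)=z+u(jx)$ rather than by citing the Fonseca--M\"uller semicontinuity theorem, and you obtain the polynomial structure and the bound $s\leq n$ in (a)$\Rightarrow$(d) via Lemma \ref{lema:null_lags} and Theorem \ref{teo:BCO} (using surjectivity of $T$ and continuity of $F$ from the spanning assumption), whereas the paper gets it more cheaply from $\Lambda$-affinity together with Murat--Tartar's algebraic condition (\ref{eq:derivatives}); both routes work, and your use of Theorem \ref{teo:ourlsc} applied to $\pm F$ for the convergence in (d), as well as your (b)$\Rightarrow$(a) by integrating the Euler--Lagrange identity along $s\mapsto z+s\mc B u$ (noting that the spanning condition yields a polynomial $w$ with $\mc B w=z$), coincide in substance with the paper.

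The genuine gap is in (a)$\Rightarrow$(b). You propose to show that $s\mapsto\int_\Omega F(\mc B u_0+s\mc B\varphi)\d x$ is constant for an \emph{arbitrary} $u_0\in C^k(\overline\Omega,\mb U)$ by triangulating $\Omega$ and replacing $\mc B u_0$ by piecewise constants $z_T$. But Corollary \ref{cor:Bqc} only gives $\int_T F(z_T+\mc B\psi)-F(z_T)\d x=0$ for $\psi$ compactly supported \emph{in $T$}, and $s\varphi$ restricted to a simplex $T$ is not compactly supported there; cutting $\varphi$ off near $\partial T$ introduces terms involving derivatives of the cut-off of size $d^{-k}$ which are not small, and $\varphi$ itself is not small near $\partial T$, so the simplex-wise errors do not vanish as the triangulation is refined. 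In fact the constancy you are trying to prove is essentially the null Lagrangian property itself, so this step is where the real content lies and the proposed localization does not deliver it. The paper's way around this is to reverse the order of operations: for $u_n\in C^\infty_c(\Omega,\mb U)$ the map $t\mapsto\int_\Omega F(\mc B(u_n+t\varphi))\d x$ is constant \emph{immediately} from (\ref{eq:B-qa}) with $z=0$ (since $u_n+t\varphi$ is compactly supported), so differentiation gives $\int_\Omega\langle \D F(\mc B u_n),\mc B\varphi\rangle\d x=0$, and one then passes to the limit $u_n\to u_0$ in $C^k$ on a neighbourhood of $\supp\varphi$ (only values there matter, since $\mc B\varphi$ is supported in $\supp\varphi$). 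Alternatively, once you have (d) and the minors representation, the divergence structure of Jacobians yields (b) directly; but as written, your localization step would fail.
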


In light of \ref{it:Piola} above we will sometimes call $\mc A$-quasiaffine maps \textbf{null Lagrangians}, as it is usual in the Calculus of Variations literature.

\begin{proof}
	\ref{it:A-qa} $\Leftrightarrow$ \ref{it:weak*cts}: It is clear that \ref{it:weak*cts} holds if and only if, for any $\varphi \in C(\Omega)$,
	the functionals $u\mapsto \pm \int_\Omega \varphi(x) F(v(x)) \d x$ are sequentially weakly$^*$ lower semicontinuous on $ L^{\infty}_{\mc A}(\Omega)$. By Theorem \ref{teo:lsc} this happens if and only if $F$ is $\mc A$-quasiaffine.

	Clearly  
	\ref{it:improved} $\Rightarrow$ \ref{it:weak*cts}. We now prove
	\ref{it:weak*cts} $\Rightarrow$ \ref{it:improved}, by an argument similar to the one in the first paragraph. 
	It is well-known that $F$ must be $\Lambda$-affine (see e.g. \cite{Tartar1979}), i.e.\ it is affine along lines parallel to $\Lambda$. Since 
	$\tp{span}\,\Lambda=\mb V$, it must be a polynomial of degree $s\leq
	\dim \mb V$ and the inequality $s\leq n$ follows from (\ref{eq:derivatives}) below.
	We apply Theorem \ref{teo:ourlsc} to conclude that if the premise of the implication in \ref{it:improved} holds then $\int_\Omega \varphi(x)F(x,v_j(x))\dif x\rightarrow\int_\Omega \varphi(x)F(x,v(x))\dif x$.

	\ref{it:A-qa} $\Rightarrow$ \ref{it:Piola}: We already know that $F$ is a polynomial so in particular it is smooth. Let us take $u_n,\varphi \in C^\infty_c(\Omega, \R^b)$ and $t>0$. Then, by (\ref{eq:B-qa}),
	$$0=\left.\frac{\d }{\d t}\right|_{t=0} \int_\Omega F(\mc B u_n + t \mc B \varphi_n) \d x= \sum_{i=1}^d \int_\Omega \frac{\p F}{\p v^i}(\mc B u_n) (\mc B \varphi)^i \d x.$$
	Choosing $u_n\to u$ in $C^k(\supp \varphi)$, we obtain \ref{it:Piola}. The converse direction is identical. 
\end{proof}

Most of the above proposition is essentially contained in the literature, as becomes clear from the proof. The only novelty is \ref{it:improved}, which improves the integrability required for \textsc{Murat}'s result \cite{Murat1981} to hold: even in the simplest case where $\mc B=\D^k$, it only follows from his result that a polynomial of degree three is sequentially weakly continuous as a map 
$ W^{k,4}(\Omega)\to \mathscr D'(\Omega) $; this had already been observed
and improved in \cite{Ball1981}, see also \cite{Reshetnyak1967,Reshetnyak1968}, but here it is extended to an
arbitrary constant rank operator.

While Proposition \ref{prop:abstractnl} gives an abstract characterization of null Lagrangians it is relevant to have an effective way of computing them.
For an operator\footnote{Strictly speaking, in
	\cite{Murat1981,Tartar1979} it is assumed that $\mc A$ is a
	first-order operator, but one
	can easily check that the proof carries through to the case where
	$\mc A$ is a general homogeneous $l$-th order operator.} $\mc A$ not necessarily of constant rank
\textsc{Tartar}  \cite{Tartar1979} showed that \ref{it:weak*cts} implies the algebraic condition
\begin{equation}
\label{eq:derivatives}
\D^r F(v)[\lambda_1, \dots, \lambda_r]=0 \tp{ for all } (\lambda_1, \xi_1), \dots, (\lambda_r, \xi_r) \in V \tp{ with } \tp{rank\,}(\xi_1, \dots, \xi_r)<r
\end{equation}
for all $v\in \mb V$ and all $r\geq 2$. \textsc{Murat} \cite{Murat1981} then proved  that if moreover $\mc A$ has constant rank then these conditions are in fact sufficient, i.e.\ (\ref{eq:derivatives}) is equivalent to \ref{it:weak*cts}. Unfortunately, it is in general unclear what are the polynomials, if any, satisfying the above restriction. 
\textsc{Murat} \cite[page 93]{Murat1978} was already aware of this difficulty (emphasis not ours):

\begin{quote}
	Encore faut-il, dans chaque cas particulier, \textit{expliciter} quels sont les polyn\^omes homog\`enes de degr\'e $r$ qui satisfont [(\ref{eq:derivatives})]. Cela conduit \`a des calculs alg\'ebriques qui sont parfois difficiles, voire inextricables.
\end{quote}
Even in the case where $\mc B=\D^k$ it is by no means easy to find all the weakly continuous functions. The following result \cite[Theorem 4.1]{Ball1981} relies on deep algebraic facts:

\begin{teo}\label{teo:BCO}
	Let $F\colon \odot^k (\R^n,\R^m)\to \R$ be continuous. Then $F=F(\D^k u)$ is $\D^k$-quasiaffine if and only if it is an affine combination of Jacobians of $U\equiv \D^{k-1}u$, by which we mean that there exist constants $c_M\in\R$ such that
	$$
	F=F(0)+\sum_{M} c_M M(\D U) 
	$$
	where $M\colon \R^{N\times n}\rightarrow \R$ runs over all
        $s\times s$ minors of $N\times n$ matrices, for
        $N=\dim\odot^{k-1} (\R^n,\R^m) $ and $s=1,\ldots\min \{n,N\}$.
\end{teo}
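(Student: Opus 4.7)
The plan is to combine Proposition \ref{prop:abstractnl}(d), which already tells us that $F$ must be a polynomial of degree $s\leq \min\{n,\dim\odot^k(\R^n,\R^m)\}$ satisfying Tartar's identity \eqref{eq:derivatives}, with the classical first-order theory of null Lagrangians. The whole proof organizes itself around viewing $D^k u = D(D^{k-1}u)$ as an ordinary gradient of the $\R^N$-valued map $U := D^{k-1}u$, where $N=\dim \odot^{k-1}(\R^n,\R^m)$, and then transferring structural results from $\mathrm{curl}$ to $D^k$.

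For the \emph{if} direction, this viewpoint makes things immediate. Every $s\times s$ minor $M\colon\R^{N\times n}\to\R$ is a classical null Lagrangian with respect to first-order gradients of $\R^N$-valued maps (Morrey, Reshetnyak, Ball). In particular, perturbing $u\in C^\infty_c(\Omega,\R^m)$ by $t\varphi$, the corresponding perturbation of $U$ is $tD^{k-1}\varphi$, which is compactly supported; hence $\int_\Omega M(D(D^{k-1}u+tD^{k-1}\varphi))\,dx$ is independent of $t$, so $M(D^k u)$ is $D^k$-quasiaffine. Linear combinations of null Lagrangians remain null Lagrangians, covering any affine combination appearing in the statement.

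For the \emph{only if} direction, the strategy is to extend $F$ from its natural domain $\odot^k(\R^n,\R^m)$, viewed as a subspace of $\R^{N\times n}$ via $D^k u = DU$, to a curl-quasiaffine polynomial $\widetilde F\colon\R^{N\times n}\to\R$ of the same degree $s$. Once this extension is in hand, the classical first-order result immediately writes $\widetilde F$ as an affine combination of $s\times s$ minors $M$ of $N\times n$ matrices, and restricting to $\odot^k(\R^n,\R^m)$ gives the claimed representation for $F$. The construction of $\widetilde F$ uses the algebraic identity \eqref{eq:derivatives}: since $D^s F$ vanishes on any $s$-tuple $(\lambda_1,\dots,\lambda_s)\subset\odot^k(\R^n,\R^m)$ whose associated frequencies $(\xi_1,\dots,\xi_s)$ are rank-deficient, one may polarize and symmetrize to produce an $s$-linear form on $(\R^{N\times n})^s$ which vanishes on all first-order rank-deficient tuples. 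Equivalently, $D^s\widetilde F$ factors through the exterior power $\Lambda^s(\R^N)\otimes \Lambda^s(\R^n)$, which is precisely the algebraic content of being a sum of $s\times s$ minors.

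The main obstacle is exactly this extension step: the wave cone of $\mathrm{curl}$ on $\R^{N\times n}$ is strictly larger than the wave cone of $D^k$ on $\odot^k(\R^n,\R^m)$, so a priori \eqref{eq:derivatives} carries less information than the full curl-version of the rank-one condition. Showing that the symmetry structure of $\odot^k(\R^n,\R^m)$ nonetheless forces the stronger, first-order vanishing condition on $D^s\widetilde F$ is the deep algebraic point of \cite{Ball1981}; it relies on a careful bookkeeping of skew- and symmetric multilinear algebra on jet spaces and on the fact that elements of $\odot^k(\R^n,\R^m)$ of the form $\lambda\otimes\xi^{\otimes k}$ span enough of the space to recover $\widetilde F$ unambiguously from $F$.
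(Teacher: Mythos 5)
There is a genuine gap in the \emph{only if} direction. Note first that the paper does not prove this statement at all: it is quoted verbatim from \cite[Theorem 4.1]{Ball1981} (with the independent proof in \cite{Anderson1980} also mentioned), so the only question is whether your argument stands on its own. Your \emph{if} direction is fine: writing $U=\D^{k-1}u$, a compactly supported perturbation of $u$ induces a compactly supported perturbation of $U$, so the classical quasiaffinity of minors with respect to first-order gradients transfers directly. But in the converse direction the entire content of the theorem is concentrated in the step you call ``the extension step,'' and you do not prove it. Saying that $F$ extends to a curl-quasiaffine polynomial $\widetilde F$ on $\R^{N\times n}$, equivalently that $\D^s\widetilde F$ vanishes on rank-deficient tuples of genuine rank-one matrices $\eta\otimes\xi$, is \emph{equivalent} to the conclusion that $F$ is an affine combination of minors of $\D U$; it is not a reduction of the problem. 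The information you actually have from $\D^k$-quasiaffinity, via \eqref{eq:derivatives}, is only the vanishing of $\D^sF$ on $s$-tuples of the special symmetric directions $\lambda_i\otimes\xi_i^{\otimes k}$ with $\tp{rank}(\xi_1,\dots,\xi_s)<s$, and this cone is a strictly thinner subcone of the rank-one cone of $\R^{N\times n}$ (you acknowledge this yourself). The sentence ``one may polarize and symmetrize to produce an $s$-linear form on $(\R^{N\times n})^s$ which vanishes on all first-order rank-deficient tuples'' is precisely the assertion that needs proof, and no mechanism is given for why the symmetric jet structure forces it; a priori there could be $\D^k$-quasiaffine functions that admit no rank-one affine extension, and ruling this out is the hard multilinear-algebraic work of Ball--Currie--Olver.

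Deferring that step to ``the deep algebraic point of \cite{Ball1981}'' makes the argument circular in this setting, since \cite{Ball1981} is the source of the very theorem you are asked to prove. To close the gap you would need to actually carry out the algebra: for instance, an induction on the degree $s$ together with a careful analysis of how the symmetrized directions $\lambda\otimes\xi^{\otimes k}$ generate enough of $\Lambda^s(\R^N)\otimes\Lambda^s(\R^n)$ after polarization, or an independent route such as the hyperjacobian analysis of \cite{Anderson1980}. As it stands, the proposal is a correct statement of a plausible strategy plus a citation of the result itself, not a proof.
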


It appears that this result was proved independently around the same time in \cite{Anderson1980}.
We are interested in using the above theorem to make the characterization of Proposition
\ref{prop:abstractnl} more explicit.
Let us write, following \cite[\S 4]{Kirchheim2016},
$$\mc D(n,k, \mb U)\equiv \left\{u\otimes \xi^{\otimes k}:
u\in \mb U, \xi \in \R^n \right\};$$
this cone spans $\odot^k(\R^n, \mb U)$ and when $k=1$ is the usual cone of rank-one linear transformations. 
Going back to (\ref{eq:C}), we  note that it implies that, for $v\in \mb V$,
$$\mc B(\xi)v=T(v \otimes \xi^{\otimes k}).$$
Since $\tp{im}\, \mc B(\xi)=\tp{ker\,} \mc A(\xi )$, it follows from the definition of $\Lambda$ that $T $ maps the cone $\mc D(n,k, \mb U)$ onto $\Lambda$.
The following straightforward lemma will be helpful:
\begin{lema}\label{lema:null_lags}
	If $F\colon \mb V \to \R$ is $\mc A$-quasiaffine then the composition $F\circ T$ is $\D^k$-quasiaffine; the converse also holds if $\tp{span}\,\Lambda=\mb V$.
\end{lema}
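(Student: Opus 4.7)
The plan is to use the characterization of $\mc A$-quasiaffinity given in Corollary \ref{cor:Bqc}, together with the key identity $\mc B u = T(\D^k u)$ coming from the jet representation \eqref{eq:C}. Applied to the operator $\D^k$ (which serves as its own potential, its annihilator being a curl-type operator), Corollary \ref{cor:Bqc} says that a locally bounded Borel $G\colon \odot^k(\R^n,\mb U)\to \R$ is $\D^k$-quasiaffine if and only if $\int_\Omega G(z+\D^k u)-G(z)\d x=0$ for every $z\in \odot^k(\R^n,\mb U)$ and every $u\in C^\infty_c(\Omega,\mb U)$. This reduces the lemma to a direct algebraic manipulation of integrals.

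For the forward direction, assume $F$ is $\mc A$-quasiaffine and set $G=F\circ T$. Given $z\in \odot^k(\R^n,\mb U)$ and $u\in C^\infty_c(\Omega,\mb U)$, linearity of $T$ combined with $\mc B u = T(\D^k u)$ gives
$$\int_\Omega (F\circ T)(z+\D^k u)-(F\circ T)(z)\d x = \int_\Omega F(T(z)+\mc B u)-F(T(z))\d x.$$
Since $T(z)\in \mb V$, the right-hand side vanishes by Corollary \ref{cor:Bqc} applied to $F$, so $F\circ T$ is $\D^k$-quasiaffine. Note that this direction uses nothing about the size of $\Lambda$.

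For the converse, assume $F\circ T$ is $\D^k$-quasiaffine and $\tp{span}\,\Lambda=\mb V$. The key observation is that $T\colon \odot^k(\R^n,\mb U)\to \mb V$ is then surjective: its image is a linear subspace containing $\Lambda$ (as $T$ sends $\mc D(n,k,\mb U)$ onto $\Lambda$), hence contains $\tp{span}\,\Lambda=\mb V$. Given any $z\in \mb V$, pick $\tilde z\in \odot^k(\R^n,\mb U)$ with $T(\tilde z)=z$; then for every $u\in C^\infty_c(\Omega,\mb U)$,
$$\int_\Omega F(z+\mc B u)-F(z)\d x = \int_\Omega (F\circ T)(\tilde z+\D^k u)-(F\circ T)(\tilde z)\d x = 0,$$
and another application of Corollary \ref{cor:Bqc} yields that $F$ is $\mc A$-quasiaffine.

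There is no real obstacle here; the only point deserving care is the surjectivity of $T$ in the converse. Without the spanning hypothesis, values of $F$ in directions outside $\tp{span}\,\Lambda$ are unconstrained by $\mc A$-quasiaffinity of $F$ (as in the proof of Lemma \ref{lema:ctsAqc}), while $F\circ T$ sees only the image of $T$, so the converse implication genuinely fails in that generality.
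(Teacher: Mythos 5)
Your proof is correct and follows essentially the same route as the paper: both rest on the identity $\mc B u = T(\D^k u)$ from \eqref{eq:C}, the linearity of $T$, the surjectivity of $T$ onto $\mb V$ deduced from $\tp{span}\,\Lambda=\mb V$, and the integral characterization of quasiaffinity in Corollary \ref{cor:Bqc}. The only difference is cosmetic: the paper writes out the converse and calls the forward direction ``absolutely similar,'' while you spell out both and correctly observe that only the converse needs the spanning hypothesis.
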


\begin{proof}
	We only prove the converse direction as the other one is absolutely similar, so
	suppose that $F$ is $\D^k$-quasiaffine.
	By assumption, for each $v\in \mb V$ there is some $z\in \odot^k (\R^n,\mb U)$ such that $T z =v$.
	Then for any $u \in C^\infty_c(\Omega, \mb U)$ we have
	$$F(v)=F\circ T( z)=\fint_\Omega F\circ T(z + \D^k u)\d x=\fint_\Omega F(v+\mc B u)
	\d x$$
	where we used the linearity of $T$ and (\ref{eq:C}).
	This shows that $F\circ T$ is $\mc A$-quasiaffine.
\end{proof}

\begin{remark}
	An interesting takeaway from this lemma is that there seem to be two competing notions of polyconvexity \cite{Boussaid2018}. We follow the usual definition in the curl-free case \cite{Ball1977} and say that $F\colon \mb V\to \R$ is $\mc A$-\textbf{polyconvex} if it is the pointwise supremum of $\mc A$-quasiaffine functions; this is an intrinsic notion. Another possibility is to consider the class of functions $F$ such that $F\circ T$ is $\D^k$-polyconvex. This class is contained in the class of $\mc A$-quasiconvex functions, as one readily checks by a calculation similar to the one in the proof of the lemma. Let us call such functions \textbf{extrinsically $\mc A$-polyconvex}. We have that
	$$\tp{convexity}\, \implies \, \mc A\tp{-polyconvexity }\, \implies \, \tp{ extrinsic } \mc A\tp{-polyconvexity} \, \implies \, \mc A\tp{-quasiconvexity}$$
	and in some cases the first two notions coincide, see Example
	\ref{ex:sym} below, where $\mc B=\mc E$. In this case, $F(\mc B u)=\det \mc E u$ is extrinsic symmetric polyconvex, but not symmetric polyconvex. It is also clear that the intrinsic and extrinsic
	classes of polyconvex integrands can be the same, as it is the case
	when $\mc B=\D^k$. These notions have been further studied in the particular case
	where the integrands depend on differential forms \cite{Bandyopadhyay2016}.
\end{remark}

Since we assume that $\tp{span}\, \Lambda=\mb V$, we have that $T$ is onto $\mb V$ and the Rank--Nullity Theorem yields the linear isomorphism
\begin{equation}
\label{eq:id}
\odot^k (\R^n,\mb U)\cong \ker T\oplus \tp{im}\, T=\ker T\oplus\mb V.\end{equation}
Therefore we think of $\mb V$ as a subspace of 
$\odot^k (\R^n,\mb U)$ and of $T$ as a projection onto that subspace. The utility of this viewpoint is illustrated by the previous results: under the assumptions of the lemma, the map $F\circ T$ is an affine combination of Jacobians and under the identification (\ref{eq:id}) we can in fact think of $F$ as real-valued map defined on 
$\mb V\subseteq \odot^k (\R^n,\mb U)$. Thus, we have shown:

\begin{prop}\label{prop:explicitnulllags}
	Let $F\colon \mb V \to \R$ be a $\mc A$-quasiaffine map. Then, under the identification (\ref{eq:id}), we can find constants $%c^L_{\a, \nu}
	c_M\in \R$ such that
	\begin{equation}
	F\circ T=F(0)+%\sum_{\a, \nu, L} c^L_{\a, \nu} J^{\a, \nu}_L
	\sum_{M}c_M M,
	\label{eq:jacobians}
	\end{equation}
	where $M\colon \R^{N\times n}\rightarrow \R$, $N=\dim\odot^{k-1} (\R^n,\R^m) $, runs over all minors of $N\times n$ matrices.
\end{prop}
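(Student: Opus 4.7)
The plan is to combine Lemma \ref{lema:null_lags} with the Ball--Currie--Olver Theorem \ref{teo:BCO} in a direct way; the identification \eqref{eq:id} is what makes such a combination meaningful on the nose.

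First I would verify the continuity hypothesis needed to invoke Theorem \ref{teo:BCO}. Since $F$ is $\mc A$-quasiaffine, both $F$ and $-F$ are $\mc A$-quasiconvex. Under the standing assumption (\ref{eq:assumption}), Lemma \ref{lema:ctsAqc} then guarantees that $F$ is continuous on $\mb V$. Composing with the linear map $T\colon \odot^k(\R^n,\mb U)\to \mb V$, we conclude that $F\circ T\colon \odot^k(\R^n,\mb U)\to\R$ is continuous.

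Next I would transfer the quasiaffinity from $F$ to $F\circ T$. This is precisely Lemma \ref{lema:null_lags}: since $\tp{span}\,\Lambda=\mb V$, the map $F$ being $\mc A$-quasiaffine is equivalent to $F\circ T$ being $\D^k$-quasiaffine. With continuity and $\D^k$-quasiaffinity in hand, Theorem \ref{teo:BCO} applies directly to $F\circ T$ (viewed as a function of $\D^k u$ through the identification $U=\D^{k-1}u$ and $\D U = \D^k u$), yielding constants $c_M$ such that
$$F\circ T = (F\circ T)(0) + \sum_M c_M M = F(0) + \sum_M c_M M,$$
where $M$ ranges over all $s\times s$ minors of $N\times n$ matrices for $s=1,\dots,\min\{n,N\}$ and $N=\dim \odot^{k-1}(\R^n,\mb U)$. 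Here we have used that $T(0)=0$.

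The only subtle point, and the one I would flag as the main thing to verify, is the compatibility of the identification \eqref{eq:id}: the decomposition $\odot^k(\R^n,\mb U)\cong \ker T\oplus \mb V$ lets us regard $F$ as defined on a subspace of $\odot^k(\R^n,\mb U)$ and $T$ as the associated projection, so that the affine combination of minors produced by Theorem \ref{teo:BCO} lives on the correct ambient space and restricts to the desired representation of $F$ itself. Once this identification is made explicit, no further computation is required: the proposition follows by a one-line application of Theorem \ref{teo:BCO} to $F\circ T$.
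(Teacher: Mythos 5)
Your proposal is correct and follows essentially the same route as the paper: Lemma \ref{lema:null_lags} transfers $\mc A$-quasiaffinity of $F$ to $\D^k$-quasiaffinity of $F\circ T$, and Theorem \ref{teo:BCO} then represents $F\circ T$ as an affine combination of minors, with the identification \eqref{eq:id} giving the stated reading of the result. Your extra check of continuity via Lemma \ref{lema:ctsAqc} is a harmless addition (the paper already knows $F$ is a polynomial by Proposition \ref{prop:abstractnl}, so continuity is automatic).
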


In other words, in the right coordinates, $\mc A$-quasiaffine maps are precisely the Jacobians.

It is natural to ponder for a moment whether one can hope for a more
invariant statement. The crucial point here is that proper minors,
i.e.\ minors which are not the determinant, have no intrinsic
geometric content, in the sense that they are not invariant under
changes of coordinates. We make this well-known fact very precise in the following remark.

\begin{remark}
	Assume that $m\neq n$. A (non-trivial) linear isomorphism $L\in \tp{GL}( \R^{m}\otimes \R^n )$ maps minors into minors, i.e.\ 
	$M \circ L\colon \R^{m\times n} \cong \R^m\otimes \R^n\to \R$ is a minor whenever $M\colon \R^{m\times n}\to \R$ is a minor, if and only if 
	\begin{equation}L=R\otimes S\hs
	\tp{ for some }
	R\in \tp{GL}(m), S\in \tp{GL}(n).
	\label{eq:T}
	\end{equation} This follows from the fact that minors are precisely the rank-one affine functions (and that they are affine only along rank-one lines) and that $T$ maps the rank-one cone into itself if and only if it has the form (\ref{eq:T}), see \cite[Theorem 1]{Marcus1959}. This shows the intuitive fact that minors are closely tied with the tensor product structure of the vector space $\R^{m}\otimes \R^{n}$ and that, to make sense of them, one should not forget this structure and think of it instead as a generic vector space of dimension $m\times n$.
\end{remark}

\begin{remark}
	\textsc{Robbin}--\textsc{Rogers}--\textsc{Temple} \cite[\S 5.2]{Robbin1987} asked whether all weakly continuous functions could be obtained in a framework with differential forms. Proposition
	\ref{prop:explicitnulllags} gives a positive answer to this question under the main assumption (\ref{eq:assumption}). We refer the reader to the works
	\cite{Iwaniec1993,Sil2019} for further
	properties of null Lagrangians depending on differential forms. 
\end{remark}

The above discussion shows that the choice of coordinates (\ref{eq:id}) is in some sense very arbitrary. Nonetheless, the identification (\ref{eq:id}) also turns out to be computationally effective. The computational problem is to decide which, if any, of the constants %$c^L_{\a,\nu}$ 
 $c_M$ that appear in (\ref{eq:jacobians}) can be taken to be non-zero. 
The key to solving this problem is the the immediate fact that, if $H\colon \odot^k (\R^n,\mb U)\to \R$ denotes the right-hand side of (\ref{eq:jacobians}), then
$$H=H\circ T.$$
%\textcolor{red}{This implies, in particular, that we can restrict our attention to proper minors, since clearly the determinant satisfies this equality only if $T=\tp{Id}$, in which case $\mb V=\odot^k(\R^n,\mb U)$}.
We think of both sides of this equality as being polynomials in the algebraically independent variables $x_{i_1,\dots, i_k}$, $i_j\in \{1, \dots, n\}$,  that define an element  $X=(x_{i_1,\dots, i_k})\in \odot^k (\R^n,\mb U)$. Since both sides are equal as polynomials, all the coefficients must be the same. Noting that the coefficients of these polynomials depend linearly on %$(c^L_{\a,\nu})_{\a,\nu,L}$
 $(c_M)_M$, we find from the equality of coefficients a linear system 
for the %$c^L_{\a, \nu}$
 $c_M$ whose solution determines completely the possible null Lagrangians.
This system can in turn be solved using symbolic 
computation software. 
One can also  fix a specific order of the minors in (\ref{eq:jacobians}), say $s$, and solve instead the above system with $H$ replaced by 
$$H_s\equiv %\sum_{\a, \nu, L: |L|=s} c^L_{\a, \nu} J^{\a, \nu}_L,
\sum_{\deg M=s} c_M M
$$
since minors of different orders cannot cancel each other out.
For the sake of concreteness, we illustrate this method with  simple examples. 

\begin{ex}\label{ex:sym}
	Let $T=\tp P_\tp{sym}$, where $\tp P_\tp{sym}\colon
        \R^{n\times n} \to \R^{n\times n}_\tp{sym}=\mb V$ is the
        orthogonal projection, i.e.\ $\mc B=\mc E$ is the symmetric
        gradient. The algorithm described above can be very easily
        implemented; in \texttt{Mathematica} a possible
        implementation is given in Code Listing \ref{code}.
	\begin{figure}[thp]
          \renewcommand{\figurename}{Code Listing}
		\begin{CenteredBox}
		\begin{lstlisting}
	sym[X_] := (X+Transpose[X])/2;
	X = Array[Subscript[x, #1, #2]&, {n, n}];
	const = Array[Subscript[c, #]&, Binomial[n, s]^2];
			
	Solve[
	   DeleteCases[
	      CoefficientList[
	         const.(Flatten[Minors[sym[X], s] - Minors[X, s]]), Flatten[X]
	      ]//Flatten, 0
	   ] == 0
	]
	\end{lstlisting}
		\end{CenteredBox}
                                \caption{A possible implementation of
                                  the algorithm in the setup of
                                  Example \ref{ex:sym}}
                \label{code}
	\end{figure}
	
	In this case, however, it is relatively easy to verify analytically that there are no  non-affine null Lagrangians (when $n=2, 3$, this was proved in \cite{Boussaid2018} as a consequence of more general statements). For this, it suffices to consider the case where the null Lagrangians are homogeneous polynomials of degree 2. Indeed, if $F$ is an $s$-homogeneous null Lagrangian  then $\p F/\p v$ is an $(s-1)$-homogeneous null Lagrangian, where $v$ is any vector from $\mb V$; this follows straightforwardly from (\ref{eq:B-qa}). Thus, if we prove that there are no null Lagrangians with order two then there can be no higher order null Lagrangians.
	
	From the relation $H_2=H_2 \circ T$ we deduce that, for any $X\in \R^{n\times n}$,
	$H_2(X)=H_2(X^T).$
	Given a $2\times 2$ minor $M$, let $\widetilde M$ be the minor defined by $\widetilde M(X)\equiv M(X^T)$; in particular $\widetilde M=M$ if $M$ is a principal minor. For the sake of concreteness, let us say that $M(X)=\det[(x_{i,j})_{i\in I, j \in J}]$ where $I=\{i_1,i_2\},J=\{j_1,j_2\} \subset \{1,\dots, n\}$. If we let  $X=(x_{i,j})$ be such that
	$$x_{i,j}=\begin{cases}
	1 & (i,j)=(i_k,j_k) \tp{ for } k\in \{1,2\}\\
	0 & \tp{otherwise}
	\end{cases}$$
	then 
	$$c_M=c_M M(X)=H_s(X)=H_s(X^T)=c_{\widetilde M} \widetilde M(X)=c_{\widetilde M}.$$ Now let $Y=X-X^T$
	and observe that, since $M(Z)=M(-Z)$ for any $Z\in \R^{n\times n}$,
	$$c_M +c_{\widetilde M} =c_M M(X)+c_{\widetilde M} \widetilde M(X)=H_2(Y)= H_2 (T( Y))=0.$$
	The conclusion follows.
\end{ex}

\begin{ex}
	Another relevant example is that of solenoidal matrix fields, i.e.\ $\mc A=\tp{div\,}$, which can be embedded in the framework of exterior derivatives of differential forms. As above, we are particularly interested in null Lagrangians of degree (at least) two. We will consider divergence-free fields $v\colon\R^n\rightarrow \R^{n\times n}$ for $n=2,3$. For $n=2$, we can set\footnote{In this simple case, an example of a potential operator $\mc B$ is easily chosen by inspection.} 
	$$
	v=\left(
	\begin{matrix}
	\partial_2 u_1 &-\partial_1 u_1\\
	\partial_2 u_2 &-\partial_1 u_2
	\end{matrix}
	\right)=(\D u)J,\quad\text{where }
	J=\left(
	\begin{matrix}
	0&-1\\
	1&0
	\end{matrix}
	\right),
	$$
	and note that $H_2=c\det$ for $c\in \R$. To see that this is indeed a null Lagrangian, we need only observe that $\det X=\det (XJ)$ for $X\in\R^{2\times 2}$.
	
	For $n=3$, we will show that there are no (homogeneous) quadratic null Lagrangians. First, recall that $\tp{curl}$ is a potential operator for $\tp{div}$ in this case, which we write in the  form
	$$
	\mc Bu\equiv \tp P_{\tp{asym}}\D u,\quad\text{for }u\colon \R^3\rightarrow\R^3,
	$$
	where $T\equiv \tp P_{\tp{asym}}$ denotes the orthogonal
        projection onto anti-symmetric matrices. We will test the
        relation $H_2(X)=H_2(T(X))$ with different matrices
        $X\in\R^{3\times 3}$ to show that $H_2=0$, since this is
        enough to show that there are no non-affine null Lagrangians
        (see also Example~\ref{ex:sym}). First, note that by taking
        $X=e_i\otimes e_i+e_j\otimes e_j$, $i\neq j$, the coefficients
        of the principal minors in $H_2$ must be zero. The other
        $2\times 2$ minors touch the main diagonal on exactly one
        entry, say $(i,i)$. By taking $X=ae_i\otimes e_i+e_j\otimes
        e_k$, $j\neq i\neq k$, for $a\in \R$, we see that indeed $H_2=0$.
	
	For general dimension $n\geq 3$, it is not too difficult to see that there are no non-affine div-null Lagrangians.
\end{ex}

It would  be interesting to give a theoretical characterization of the solutions of the computational problem. This is also a relevant question since the linear system  described above grows factorially in $\tp{dim}\,\mb V$, although in applications to continuum mechanics this number  is usually relatively small.
Unfortunately, even in the special case when $\mc B$ has order one such a characterization seems difficult.  The authors were unable to give a definitive answer even to the following simple-looking question.

Assume we are given a projection $T\colon \R^{m\times n}\to \mb V$, which can be chosen to be orthogonal, onto
some subspace $\mb V\subseteq \R^{m\times n}$. Consider a function $H_s\colon \R^{m\times n}\to \R$ as above, i.e.\ 
$$H_s(X)=\sum_{\mathrm{deg}M=s} c_M M(X),\hs  H_s=H_s\circ T$$
where the sum runs over the set of all minors (not necessarily principal) of order $2\leq s< \min\{m,n\}$.  The second condition can be equivalently rewritten as
\begin{align}\label{eq:question}
H_s(X)=H_s(X+Y) \textup{ for all } X, \tp{ all }Y \tp{ such that } T(Y)=0.
\end{align}
We think of this as saying that the linear combination of minors $H_s$
only depends on the coordinates of $\mb V$.

\begin{question}
	Under which conditions on $T$ can we find non-zero $H_s$ satisfying
	\eqref{eq:question}? Can we characterize such $H_s$ in terms of $\mb V$?
\end{question}

\section{Compensated compactness in Hardy spaces}
\label{sec:hardy}

We begin by stating the main theorem of this section; as usual, we assume (\ref{eq:assumption}) holds throughout. Recall that $\mc A$-quasiaffine maps are polynomials (c.f.\ Proposition~\ref{prop:abstractnl}) and see Definition~\ref{def:cocan} for the definition of $\mb I_\mc A.$

\begin{teo}\label{teo:hardy}
	Let $F\colon \mb V\to \R$ be locally bounded and Borel. If the implication
	\begin{equation}
	v\in C^\infty_{c,\mc A}(\R^n)
	\hs \implies \hs F(v)\in \mathscr
	H^1(\R^n)\label{eq:hardyimp}
	\end{equation}
	holds, $F$ is a sum of homogeneous $\mc A$-quasiaffine functions of degree at most $\min\{n,\dim \mb V\}$.

	Conversely, assume that $F$ is an $s$-homogeneous $\mc A$-quasiaffine function. If $s\geq 2$ then (\ref{eq:hardyimp}) holds and moreover 
	$$\Vert F(v)\Vert_{\mathscr H^1}\leq C \Vert v\Vert^s_{L^s}\quad\text{for }v\in C^\infty_{c,\mc A}(\R^n).$$
	If $s=1$, we have that $F(v)=v_0\cdot v$ for some $v_0\in\mb
        V$ and \eqref{eq:hardyimp} holds if and only if $v_0\perp\mb
        I_{\mc A}$, although nonetheless the above estimate fails.
\end{teo}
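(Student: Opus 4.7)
The plan for the forward direction starts from two immediate consequences of the hypothesis. Since $v\in C^\infty_{c,\mc A}(\R^n)$ has compact support, $F(v)\in L^1$ forces $F(0)=0$, and since $\mathscr H^1$ functions have zero mean, $\int_{\R^n}F(v)\d x=0$ for every such $v$. To extract polynomial structure, I would show that $F$ is $\Lambda_{\mc A}$-affine by testing against oscillating plane waves: for $\lambda\in\Lambda_\mc A$ with $\mc A(\xi)\lambda=0$, consider $v^{\e,R}(x)=\lambda\, h(\xi\cdot x/\e)\chi_R(x)$, with $h$ a smooth mean-zero periodic function and $\chi_R$ a cutoff, and correct via the Helmholtz--Hodge decomposition of Proposition~\ref{prop:hodge} to obtain genuinely compactly supported $\mc A$-free fields $\widetilde v^{\e,R}$; the correction errors are controlled by Theorem~\ref{teo:Lpest}. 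Passing to the Young measure limit in $\int F(\widetilde v^{\e,R})\d x=0$ as $\e\to 0$ and $R\to\infty$ forces $F$ to be affine along every ray in $\Lambda_\mc A$. Combined with $\tp{span}\,\Lambda_\mc A=\mb V$ and Lemma~\ref{lema:ctsAqc}, this makes $F$ a continuous polynomial, of degree at most $\min\{n,\dim\mb V\}$ via \eqref{eq:derivatives}.

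Once $F$ is polynomial, amplitude rescaling $v\mapsto\mu v$ preserves $\mc A$-freeness, so $\mu\mapsto\int F(\mu v)\d x\equiv 0$ as a polynomial in $\mu$, yielding $\int F_s(v)\d x=0$ for each homogeneous component $F_s$. I would then polarise: expanding $F_s(\sum_i t_i\mc Bu_i)$ and matching coefficients in $t_i$ gives $\int\widetilde F_s(\mc Bu_1,\dots,\mc Bu_s)\d x=0$ for all $u_i\in C^\infty_c(\R^n,\mb U)$, where $\widetilde F_s$ denotes the symmetric $s$-linear polar form of $F_s$. To deduce $\mc A$-quasiaffineness, fix a bounded domain $\Omega$ and $z\in\mb V$; choose $u_0\in C^\infty_c(\R^n,\mb U)$ with $\mc Bu_0\equiv z$ on $\Omega$ (obtainable by cutting off a polynomial lift of $z$) and $u\in C^\infty_c(\Omega,\mb U)$. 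Applied with $k$ copies of $u_0$ and $s-k$ copies of $u$, the polarised identity localises to $\int_\Omega\widetilde F_s(z,\dots,z,\mc Bu,\dots,\mc Bu)\d x=0$ for $2\le k\le s$ (the integrand vanishes outside $\Omega$ by multilinearity); the $k=1$ term vanishes by integration by parts because $\widetilde F_s(z,\dots,z,\cdot)$ is a constant linear functional on $\mb V$ tested against $\mc Bu$; and the $k=0$ term gives $F_s(z)|\Omega|$. Expanding the binomial in $F_s(z+\mc Bu)=\widetilde F_s(z+\mc Bu,\dots,z+\mc Bu)$ then yields $\int_\Omega F_s(z+\mc Bu)\d x=F_s(z)|\Omega|$, so $F_s$ is $\mc A$-quasiaffine by Corollary~\ref{cor:Bqc}.

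For the converse with $s\ge 2$, Proposition~\ref{prop:explicitnulllags} gives $F\circ T=\sum_M c_M M$ in suitable coordinates, $M$ ranging over $s\times s$ minors of $N\times n$ matrices. Given $v\in C^\infty_{c,\mc A}(\R^n)$, Proposition~\ref{prop:Lp_A=BWB,p} provides $u\in\dot W^{k,s}(\R^n,\mb U)$ with $v=\mc Bu$, so $F(v)=\sum_M c_M M(\D^k u)$; each summand is a generalised Jacobian of components of $\D^{k-1}u$ differentiated in $s$ distinct directions, and Coifman--Lions--Meyer--Semmes theory together with its higher-order extensions (cf.\ \cite{Coifman1993,Lindberg2017}) yields the Hardy bound $\|M(\D^k u)\|_{\mathscr H^1}\lesssim\|\D^k u\|_{L^s}^s$. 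Combining with the elliptic estimate $\|\D^k u\|_{L^s}\lesssim\|\mc Bu\|_{L^s}=\|v\|_{L^s}$ (built into the construction in the proof of Proposition~\ref{prop:hodge}, upon choosing the minimal $u$) closes the required bound $\|F(v)\|_{\mathscr H^1}\lesssim\|v\|_{L^s}^s$.

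The linear case $s=1$ splits off cleanly: writing $F(v)=v_0\cdot v$, membership of the bounded compactly supported scalar $F(v)$ in $\mathscr H^1$ is equivalent to $v_0\cdot\int v\d x=0$, and Lemmas~\ref{lema:char_cocanc}--\ref{lema:cocan} identify the admissible means as $\mb I_{\mc A}$, yielding $v_0\perp\mb I_{\mc A}$. The estimate $\|v_0\cdot v\|_{\mathscr H^1}\lesssim\|v\|_{L^1}$ must fail by the unboundedness of Riesz transforms on $L^1$: any admissible $\mc A$-free test field, suitably concentrated or dilated, produces a sequence with bounded $L^1$-norm but unbounded $\|R_j(v_0\cdot v)\|_{L^1}$, contradicting Proposition~\ref{prop:Riesztransfcharact}. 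The principal obstacle in the whole argument is the $\Lambda_\mc A$-affineness step of the forward direction: simulating intrinsically non-compactly supported plane waves by compactly supported $\mc A$-free test fields requires sharp uniform control on Hodge-correction commutator errors in the joint limit $\e\to 0$, $R\to\infty$, and depends essentially on the constant rank $L^p$-estimate of Theorem~\ref{teo:Lpest}.
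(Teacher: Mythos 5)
Your forward direction has a genuine gap at the polynomiality step. Because the hypothesis only concerns compactly supported $\mc A$-free fields, your oscillating test fields $\lambda h(\xi\cdot x/\e)\chi_R$ probe $F$ only around the base point $0$; even granting the delicate joint limit $\e\to 0$, $R\to\infty$ that you yourself flag, the conclusion would be that $s\mapsto F(s\lambda)$ is affine for each $\lambda\in\Lambda_{\mc A}$, i.e.\ affineness along lines \emph{through the origin}. This does not imply that $F$ is a polynomial: any odd, positively $1$-homogeneous locally bounded function, e.g.\ $v\mapsto v_1^3/|v|^2$, is affine on every line through the origin. Lemma~\ref{lema:ctsAqc} does not apply at that stage (you have not established $\mc A$-quasiconvexity at general base points), and \eqref{eq:derivatives} is derived from weak-$*$ continuity, which you also do not yet have. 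The repair is exactly the localization device you introduce later for the polarised identity, used at the start instead: given $z\in\mb V$ and $u\in C^\infty_c(\R^n,\mb U)$, pick $\phi$ with $\mc B\phi=z$ on $\supp u$; the hypothesis (zero mean of Hardy functions) gives $\int F(\mc B(t\phi+u))\d x=0=\int F(t\mc B\phi)\d x$, and since the two integrands differ only on $\supp u$, where $\mc B\phi=z$, one obtains $\int_{\R^n}[F(tz+\mc Bu)-F(tz)]\d x=0$; taking $t=1$ shows $F$ is $\mc A$-quasiaffine at \emph{every} $z$, whence it is a polynomial of degree at most $\min\{n,\dim\mb V\}$, and the splitting into homogeneous quasiaffine pieces follows from your scaling argument. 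This is the paper's (much shorter) route, and it makes both the Young-measure/Hodge-correction construction and the polarisation step unnecessary (the latter also has its indexing scrambled: the term producing $F_s(z)|\Omega|$ is the one with all factors equal to $z$, not the $k=0$ term, and the localisation only applies to terms containing at least one factor $\mc Bu$).

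Your converse for $s\ge 2$ is essentially the paper's argument: reduction to $\mc B=\D^k$ and to minors of $\D^{k-1}u$ via Lemma~\ref{lema:null_lags} and Proposition~\ref{prop:explicitnulllags}, the CLMS-type Hardy bound for Jacobians (which the paper proves self-containedly in Proposition~\ref{prop:auxh1} rather than citing \cite{Lindberg2017}), and control of $\Vert \D^k u\Vert_{L^s}$ by $\Vert v\Vert_{L^s}$ via the Hodge construction and Theorem~\ref{teo:Lpest}; the $s=1$ orthogonality characterisation via Lemmas~\ref{lema:char_cocanc} and~\ref{lema:cocan} is also the paper's. However, your proof that the estimate fails for $s=1$ is a second genuine gap. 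A fixed admissible field ``suitably concentrated or dilated'' cannot work: both $\Vert v\Vert_{L^1}$ and $\Vert v_0\cdot v\Vert_{\mathscr H^1}$ are invariant under the $L^1$-normalised dilations $v\mapsto t^{-n}v(\cdot/t)$, which preserve $\mc A$-freeness, so no blow-up is produced this way, and appealing to the unboundedness of the Riesz transforms on $L^1$ is not a construction. What is actually needed (Lemma~\ref{lema:Afreefieldshardy}) is anisotropic concentration on a hyperplane: choose $\tilde v_0\in\Lambda_{\mc A}$ with $\tilde v_0\cdot v_0\neq 0$ (possible by the spanning assumption, and necessary since the estimate must fail even for $v_0\perp\mb I_{\mc A}$, where \eqref{eq:hardyimp} does hold), build from a one-directional profile a compactly supported $\mc A$-free measure charging $\{x\cdot\xi=0\}$, and mollify; if the estimate held, the mollified sequence would be bounded in $\mathscr H^1$, forcing its singular weak-$*$ limit to be absolutely continuous, a contradiction. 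Without this (or an equivalent) construction, the final assertion of the theorem remains unproved.
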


It will be convenient to prove the homogeneous case first. We will deal with the linear case, which is somewhat degenerate, afterwards.

\begin{prop}\label{prop:auxh1}
	Let $F$ be a homogeneous polynomial on $\mb V$ of degree $2\leq s\leq\min\{ n,\dim\mb V\}$. The
	following statements are equivalent:
	\begin{enumerate}
		\item\label{itm:canc} $\int_{\R^n}F(v)=0$ for all
		$v\in C_{c,\mc A}^\infty(\R^n)$.
		\item\label{itm:hardy} $\|F(v)\|_{\mathscr H^1(\R^n)}\leq
		c\|v\|^s_{L^s(\R^n)}$ whenever $v\in L^s_{\mc A}(\R^n)$.
	\end{enumerate}
\end{prop}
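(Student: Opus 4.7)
My plan is to prove the two implications separately. The direction (\ref{itm:hardy})$\Rightarrow$(\ref{itm:canc}) is immediate: for any $v\in C^\infty_{c,\mc A}(\R^n)$ the function $F(v)$ is bounded and compactly supported, hence lies in $L^1(\R^n)$; (\ref{itm:hardy}) then places it in $\mathscr H^1(\R^n)$, which forces $\int_{\R^n}F(v)\,dx=0$ since Hardy functions have vanishing mean.

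For the converse, my strategy is to first deduce from (\ref{itm:canc}) that $F$ is $\mc A$-quasiaffine, and then to obtain the Hardy bound via the representation of null Lagrangians as sums of Jacobians. The hard part will be the first step, so I would begin by establishing an auxiliary \emph{localization lemma}: given any compact $K\subset\R^n$ and $z\in\mb V$, there exists $w\in C^\infty_{c,\mc A}(\R^n)$ with $w\equiv z$ on a neighborhood of $K$. Its proof uses (\ref{eq:assumption}) crucially: since $T\colon\odot^k(\R^n,\mb U)\to\mb V$ is surjective by the spanning assumption, there is a polynomial $p$ of degree $k$ with $\mc B p\equiv z$; then, for a cutoff $\chi\in C^\infty_c(\R^n)$ equal to $1$ on a sufficiently large neighborhood of $K$, the field $w:=\mc B(\chi p)$ lies in $C^\infty_c(\R^n,\mb V)$, satisfies $\mc A w=0$ by the exactness (\ref{eq:exact}), and agrees with $z$ on $K$ because the commutator $[\mc B,\chi]p$ vanishes where $\chi$ is locally constant.

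With this lemma in hand, I would fix a bounded domain $\Omega$, $u\in C^\infty_c(\Omega,\mb U)$, and $z\in\mb V$, and pick $w$ equal to $z$ on a neighborhood of $\supp\mc B u$. Then both $w$ and $w+\mc B u$ lie in $C^\infty_{c,\mc A}(\R^n)$, so applying (\ref{itm:canc}) to each and subtracting yields
\[
0=\int_{\R^n}\bigl[F(w+\mc B u)-F(w)\bigr]\,dx=\int_\Omega\bigl[F(z+\mc B u)-F(z)\bigr]\,dx,
\]
where the second equality holds because the integrand vanishes off $\supp\mc B u$ and $w\equiv z$ there. Corollary \ref{cor:Bqc} then gives that $F$ is $\mc A$-quasiaffine.

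Finally, since $F$ is $s$-homogeneous and $\mc A$-quasiaffine, Proposition \ref{prop:explicitnulllags} furnishes constants $c_M$ such that $F(\mc B u)=\sum_{\deg M=s}c_M M(\D U)$ with $U=\D^{k-1}u$. The classical Coifman--Lions--Meyer--Semmes bound for Jacobians (applicable since $s\leq n$) then provides $\|M(\D U)\|_{\mathscr H^1}\leq C\|\D U\|^s_{L^s}$ for each $s\times s$ minor whenever $U\in\dot W^{1,s}(\R^n)$. To conclude, any $v\in L^s_\mc A(\R^n)$ admits a representation $v=\mc B u$ with $\|\D^k u\|_{L^s}\leq C\|v\|_{L^s}$, as provided by Proposition \ref{prop:Lp_A=BWB,p} together with the elliptic estimate in the proof of Proposition \ref{prop:hodge}; summing the Jacobian bounds delivers (\ref{itm:hardy}).
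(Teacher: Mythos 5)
Your proposal is correct, and its skeleton matches the paper's: (\ref{itm:hardy})$\Rightarrow$(\ref{itm:canc}) by zero mean of Hardy functions, then quasiaffinity, then the reduction through $T$ to $\D^k$ and the representation of $F\circ T$ as a combination of $s\times s$ minors, then a Hardy bound for minors. The genuine differences are in how the two analytic ingredients are handled. First, where you cite the Coifman--Lions--Meyer--Semmes bound for the minors, the paper reproves this estimate from scratch (this is the bulk of its proof): it writes the partial Jacobian $\det\D_{x'}U'$ as a pairing $\langle \D_{x'}U'_1,\Sigma\rangle$ with $\Sigma$ divergence-free in $x'$, and runs the maximal-function/Poincar\'e--Sobolev argument directly. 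Your citation is legitimate, but note that what is needed is the minor (equivalently, dual-exponent div--curl) form of the CLMS result for maps of $n$ variables, not the $n\times n$ Jacobian theorem verbatim; this is exactly the reduction the paper carries out, and the statement is also available in the cited work of Lindberg. Second, for controlling the potential, the paper does \emph{not} claim $\|\D^k u\|_{L^s}\lesssim\|\mc B u\|_{L^s}$ for an arbitrary potential $u$ (which is false, since $\mc B$ has a nontrivial symbol kernel); it instead replaces $u$ by $u-P_{\mc B}u$ and invokes Theorem \ref{teo:Lpest}. Your route is different but sound: the representative produced in Propositions \ref{prop:Lp_A=BWB,p} and \ref{prop:hodge} is $u=\mc B^*\varphi$ with the full elliptic estimate $\|\D^{2k}\varphi\|_{L^s}\lesssim\|v\|_{L^s}$, which indeed yields $\|\D^k u\|_{L^s}\lesssim\|v\|_{L^s}$, so no projection correction is needed. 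Finally, your localization lemma (constructing $w\in C^\infty_{c,\mc A}$ equal to $z$ near a compact set via $w=\mc B(\chi p)$ with $\mc Bp\equiv z$, using surjectivity of $T$ from the spanning assumption) is correct and makes explicit the auxiliary field that the paper's proof of Theorem \ref{teo:hardy} uses implicitly; your direct subtraction replaces the paper's differentiation-in-$t$ argument and gives quasiaffinity everywhere just as well.
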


Observe that the direction \ref{itm:hardy} $\Rightarrow$ \ref{itm:canc} is clear, since functions in $\mathscr H^1(\R^n)$ have zero mean.
To prove the estimate, we follow the original strategy in \cite{Coifman1993}. In fact, we will use the potential $\mc B$ and Lemma~\ref{lema:null_lags} to show that the estimate can be inferred from the case $\mc B=\D^k$. The statement for $v=\D^k u$ is then known from \cite[Theorem~6.2]{Lindberg2017}; here we give a proof by reduction to the div-curl case.

We emphasize the technical fact that the assumption $s\leq n$ will be important in order to apply the Poincar\'e--Sobolev inequality. Given a ball $B_t(x)\subset \R^n$ we write
$(f)_{x,t}\equiv \fint_{B_t(x)} f(y) \d y.$

\begin{proof}[Proof of Proposition \ref{prop:auxh1}]
	From \eqref{eq:spaces} we see that it is sufficient to bound $F(\mc Bu)$ for $u\in C^\infty_c(\R^n,\mb U)$. Recalling Lemma~\ref{lema:null_lags}, it is natural to first deal with the case $\mc B =\D^k$. This case is already known from \cite{Lindberg2017}, but here we give a simpler proof, at least as far as notation is concerned. 
	
	We claim that if $\int_{\R^n} F(\D^ku)\d x=0$ for $u\in C^\infty_c(\R^n)$ then there is an estimate 
	\begin{align}\label{eq:pula}
	\|F(\D^k u)\|_{\mathscr H^1}\leq C\|\D ^ku\|_{L^s}^s\text{ for }u\in C^\infty_c(\R^n). 
	\end{align}
	The assumption implies that $F$ is $\D^k$-quasiaffine at zero, and hence everywhere, c.f.~the proof of Theorem~\ref{teo:hardy} below. By Theorem~\ref{teo:BCO} and $s$-homogeneity, we see that $F$ is a linear combination of minors of order $s$ of $\D U$, where $U\equiv D^{k-1}u$, i.e.\
	$$
	F(\D^{k}u)=\sum_{\tp{deg\,}M=s}c_M M(\D U).
	$$
	Thus, it is sufficient to prove the estimate in the case $F(\D^{k}u)=M(\D U)$. We choose coordinates $x=(x^\prime,x^{\prime\prime})\in\R^n$ and  $T=(T^\prime,T^{\prime\prime})\in \odot^{k-1}(\R^n,\mb U)$, where $x^\prime,\;T^\prime$ are $s$-dimensional. Then we can write
	$$
	M(\D U(x))=\det \D_{x^\prime}U^\prime (x).
	$$
	Note that $\D_{x^\prime}$ can be regarded as a differential operator on $\R^n$.
	
	To prove the claim, one can use the reasoning used in the proof of \cite[Theorem~II.1.1)]{Coifman1993}. By looking at the $(1,1)$ entry of the identity $(\det A)\tp{Id}=A(\tp{cof\,}A)^T$ applied to $A=\D f$, $f\colon\R^s\rightarrow\R^s$, we see that $\det \D f=\D f_1\cdot \sigma$, where $\sigma$ is the first row of the matrix $\tp{cof}\,\D f$, which is row-wise divergence-free, and moreover we have the pointwise estimate $|\sigma|\lesssim |\D f_2||\D f_3|\ldots |\D f_s|$. In our case, it is elementary to adapt these considerations to see that
	$$
	M(\D U)=\langle\D_{x^\prime}U^\prime_1(x),\Sigma(x)\rangle_{\R^s}
	$$
	where $\langle \cdot, \cdot \rangle_{\R^s}$ is the usual Euclidean inner product and $\Sigma\colon\R^n\rightarrow\R^s$ is such that 
	$$\tp{div}_{x^\prime}\Sigma=0\tp{ in }\R^n \hs \tp{ and } \hs |\Sigma|\lesssim|\D U^\prime_2||\D U^\prime_3|\ldots |\D U^\prime_s|.$$
	Here $\tp{div}_{x^\prime}=\D_{x^\prime}^*$ is the adjoint of the differential operator $\D_{x^\prime}$.
	
	Let $\psi\in C^\infty_c(B_1(0))$ be a non-negative function with non-zero mean. We have
	\begin{align*}
	|\psi_t*M(\D U)|(x)&=\left|\frac{1}{t^{n}}\int_{\R^n}\psi\left(\frac{x-y}{t}\right)\langle\D_{x^\prime}U^\prime_1(y),\Sigma(y)\rangle_{\R^s}\dif y\right|\\
	&=\left|\frac{1}{t^{n}}\int_{B_t(x)}\biggr\langle\D_{x^\prime}\big[U^\prime_1(y)-(U^\prime_1)_{x,t}\big],\psi\left(\frac{x-y}{t}\right)\Sigma(y)\biggr\rangle_{\R^s}\dif y\right|\\
	&=\left|\frac{1}{t^{n+1}}\int_{B_t(x)}(U^\prime_1(y)-(U^\prime_1)_{x,t})\biggr\langle (\D_{x^\prime}\psi)\left(\frac{x-y}{t}\right),\Sigma(y)\biggr\rangle_{\R^s}\dif y\right|\\
	&\lesssim
	\frac{1}{t^{n+1}}\int_{B_t(x)}|U^\prime_1(y)-(U^\prime_1)_{x,t}||\Sigma(y)|\dif y,
	\end{align*}
	where in the third equality we integrated by parts, using the fact that that $\tp{div}_{x'}\Sigma=0$. We apply H\"older's inequality with $p=nq/(n+q)$ for some $q\in(1,s)$ to get
	\begin{align*}
	|\psi_t*M(\D U)|(x)&\lesssim \frac{1}{t}\left(\fint_{B_t(x)} |U^\prime_1(y)-(U^\prime_1)_{x,t}|^p\dif y\right)^{1/p}\left(\fint_{B_t(x)} |\Sigma(y)|^{p^\prime}\dif y\right)^{1/{p^\prime}}\\
	&\lesssim
	\left(\fint_{B_t(x)} |\D U^\prime_1(y)|^{q}\dif y\right)^{1/q}\left(\fint_{B_t(x)} |\Sigma(y)|^{p^\prime}\dif y\right)^{1/{p^\prime}}
	\end{align*}
	where we also used the Poincar\'e--Sobolev inequality; note that the implicit constant does not depend on $t$.
	We further ensure that $p^\prime=p/(p-1)<s/(s-1)=s'$ by requiring $q>ns/(n+s)$. We next note that, writing $\mc M$ for the Hardy--Littlewood maximal function,
	\begin{align*}
	\sup_{t>0}|\psi_t*M(\D U)|(x)&\lesssim\sup_{t>0}\left[\left(\fint_{B_t(x)} |\D U^\prime_1(y)|^{q}\dif y\right)^{1/q}\left(\fint_{B_t(x)} |\Sigma(y)|^{p^\prime}\dif y\right)^{1/{p^\prime}}\right]\\
	&\leq\sup_{t>0}\left(\fint_{B_t(x)} |\D U^\prime_1(y)|^{q}\dif y\right)^{1/q}\sup_{t>0}\left(\fint_{B_t(x)} |\Sigma(y)|^{p^\prime}\dif y\right)^{1/{p^\prime}}\\
	&=\mc M(|\D U^\prime_1|^q)(x)^{1/q}\mc M(|\Sigma|^{p^\prime})(x)^{1/p^\prime}
	\end{align*}
	Integrating this estimate with respect to $x$ and applying H\"older's inequality twice we obtain
	\begin{align*}
	\|M(\D U)\|_{\mathscr H^1(\R^n)}&
	\lesssim\left(\int_{\R^n}\mc M(|\D U^\prime_1|^q)^{s/q}\right)^{1/s}\left(\int_{\R^n}\mc M(|\Sigma|^{p^\prime})^{s^\prime/p^\prime}\right)^{1/s^\prime}\\
	&\lesssim\left(\int_{\R^n}|\D U^\prime_1|^s\right)^{1/s}\left(\int_{\R^n}|\Sigma|^{s^\prime}\right)^{1/s^\prime}\\
	&\lesssim\left(\int_{\R^n}|\D U^\prime_1|^s\right)^{1/s}\left(\int_{\R^n}\prod_{i=2}^s|\D U^\prime_i|^{s/(s-1)}\right)^{(s-1)/s}\\
	&\leq\prod_{i=1}^s\|\D U_i^\prime\|_{L^s(\R^n)}\leq C\|\D U^\prime\|^s_{L^s(\R^n)},
	\end{align*}
	where moreover the second inequality follows by boundedness of the maximal function.
	This proves the desired claim \eqref{eq:pula}.
	
	To conclude the proof, we return to the case of a general $\mc B$ and use Lemma~\ref{lema:null_lags}:
	$$
	\|F(\mc B u)\|_{\mathscr H^1(\R^n)}=\|F\circ T(\D^k u)\|_{\mathscr H^1(\R^n)}\leq C\|\D^k u\|_{L^s(\R^n)}^s\leq C\|\mc B u\|_{L^s(\R^n)}^s,
	$$
	where the last estimate follows from Theorem~\ref{teo:Lpest}, since the left-hand side is kept unchanged by replacing $u$ with $u-P_{\mc B} u$.  
\end{proof}

\begin{remark} It is possible to give a more abstract proof of the above proposition in the spirit of \cite{Lindberg2017,Strzelecki2001}, circumventing the explicit representation of null Lagrangrians from \cite{Ball1981}. The basic idea is that, since both $F$ and $\mc B$ are homogeneous, we can write
	$$F(\mc B u)=\sum_{\nu \in \{1,\dots,\tp{dim}\mb U\}^s} \sum_{|\beta_1|, \dots, |\beta_s|=k} f_{\beta,\nu}\prod_{i=1}^s
	\p^{\beta_i} u^{\nu_i}$$
	for some constants $f_{\beta,\nu}\in \R$,  where each $\beta_i$ is an $n$-multi-index. Using the Leibniz rule together with the cancellation assumption \ref{itm:canc} we have, after some elementary calculations,
	\begin{equation*}
	\int_{\R^n}
	\psi_t(x -y) F(\mc Bu(y))\d y
	=-
	\sum_{\beta,\nu} \frac{f_{\beta,\nu}}{t^n} \int_{\R^n}\prod_{i=1}^s 
	\sum_{\gamma<\beta} c_{\beta,\gamma}\p^{\beta_i-\gamma_i}\phi\left(\frac{x-y}{t}\right) \,\p^{\gamma_i}
	u^{\nu_i}(y)\d y                           
	\end{equation*}
	where by $\gamma<\beta$ we mean that there is some $i$ such that $\gamma_i<\beta_i$ as multi-indices and $\psi\equiv \phi^s$. The point is that, for each $(\beta,\nu)$ fixed, at least one of the terms on the right has one less derivative than the others. Therefore, subtracting enough moments from $u$, we see from the Poincar\'e--Sobolev inequality that this term has higher integrability than the others. One then concludes by suitably applying H\"older's inequality, similarly to above.
\end{remark}

In order to deduce the theorem from the proposition we need to justify the assumption $s\geq 2$.
This will be done in the following lemma, which proves a non-inclusion of $L^1_{\mc A}(\R^n)$ into $\mathscr{H}^1(\R^n)$ and which is somewhat reminiscent of the much deeper Ornstein's non-inequality \cite{Ornstein1962,Kirchheim2016}. The common theme is, of course, the lack of boundedness of singular integrals on generic subspaces of $L^1$, c.f.\ Proposition \ref{prop:Riesztransfcharact}. Recall that we assume (\ref{eq:assumption}).

\begin{lema}
	\label{lema:Afreefieldshardy}
	Let $v_0\in\mb V$ be a non-zero vector. Then there exists a sequence $v_j\in C_{c,\mc A}^\infty(\R^n)$ such that $\|v_0\cdot v_j\|_{\mathscr H^1}\geq j$ but $\|v_j\|_{L^1}\leq 1$ for all $j\geq 1$.
\end{lema}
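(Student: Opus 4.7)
The plan is to split based on whether $v_0$ lies in $(\mb I_{\mc A})^\perp$. In either case the spanning assumption and $v_0\neq 0$ yield $(\lambda,\xi_0)\in V_{\mc A}$ with $v_0\cdot\lambda\neq 0$, and by Theorem~\ref{teo:b} I can write $\lambda=\mc B(\xi_0)\mu$ for some $\mu\in\mb U$.

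If $v_0\not\perp\mb I_{\mc A}$, I would instead pick $\lambda$ directly in $\mb I_{\mc A}$ with $v_0\cdot\lambda\neq 0$, so that $\mc A(\lambda\phi)=0$ for every scalar $\phi\in C_c^\infty(\R^n)$ (since $A_\alpha\lambda=0$ for all $\alpha$). Choosing $\phi$ with $\int\phi\neq 0$ makes $v_0\cdot(\lambda\phi)$ have nonzero mean, hence it is not in $\mathscr H^1(\R^n)$, and a suitable normalization gives the required constant sequence with $\|v_0\cdot v_j\|_{\mathscr H^1}=+\infty$.

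The non-trivial case is $v_0\perp\mb I_{\mc A}$, which subsumes all cocanceling operators. After a rotation I may assume $\xi_0=e_1$ and split $x=(x_1,x_\perp)\in\R\times\R^{n-1}$. Fix bumps $\phi_0\in C_c^\infty(\R)$ and $\chi_0\in C_c^\infty(\R^{n-1})$ each with non-zero mean, and let $f_M(x_1)=\sum_{i=0}^k(-1)^i\binom{k}{i}\phi_0(x_1-iM)$; this $k$-th forward difference of $\phi_0$ has $k$ vanishing moments, so its $k$-th primitive $h_M$ belongs to $C_c^\infty(\R)$. With a scale parameter $R\geq M$ to be chosen, I would set $u_M(x)=R^{-(n-1)}\mu\,h_M(x_1)\chi_0(x_\perp/R)$ and work with $v_M:=\mc B u_M\in C^\infty_{c,\mc A}(\R^n)$.

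The Leibniz rule decomposes $v_M=R^{-(n-1)}\sum_{j=0}^k h_M^{(j)}(x_1)W_j^R(x_\perp)$, where $W_k^R(x_\perp)=\lambda\chi_0(x_\perp/R)$ and for $j<k$ the term $W_j^R$ is a linear combination of $(k-j)$-th transverse derivatives of $\chi_0(x_\perp/R)$; thus $\|W_j^R\|_{L^1}\lesssim R^{n-1-(k-j)}$ and, crucially, $\int W_j^R\,dx_\perp=0$. Combined with $\|h_M^{(j)}\|_{L^1(\R)}\lesssim M^{k-j}$, the choice $R=M$ forces $\|v_M\|_{L^1}\leq C$ uniformly in $M$. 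To lower-bound the Hardy norm I would pair $v_0\cdot v_M$ against the $\BMO(\R^n)$ function $\log|x_1|$: the vanishing integrals $\int W_j^R\,dx_\perp=0$ kill every subleading contribution, leaving
\[
\int(v_0\cdot v_M)\log|x_1|\,dx=(v_0\cdot\lambda)\|\chi_0\|_{L^1}\int f_M(x_1)\log|x_1|\,dx_1\sim\log M
\]
by a direct estimate on the translated bumps $\phi_0(\cdot-iM)$. Since $\|\log|x_1|\|_\BMO$ is a universal constant, $\mathscr H^1$--$\BMO$ duality gives $\|v_0\cdot v_M\|_{\mathscr H^1}\gtrsim\log M$, so taking $M=e^j$ after normalization in $L^1$ produces the required sequence. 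The main difficulty is the careful bookkeeping of the Leibniz cross-terms and the matching of scales $R=R(M)$ so that the $L^1$ bound is preserved while the $\BMO$-pairing genuinely grows; what makes everything fit is precisely that only the $j=k$ term survives the pairing with $\log|x_1|$.
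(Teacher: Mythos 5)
Your argument is correct, but it takes a genuinely different route from the paper. The paper argues by contradiction: using the spanning condition it picks $(\tilde v_0,\xi)\in V_{\mc A}$ with $\tilde v_0\cdot v_0\neq 0$, writes $\tilde v_0=\mc B(\xi)u_0$, and takes $u(x)=\max\{x\cdot\xi,0\}^{k-1}u_0$, so that after a cut-off $\mc B(\rho u)$ is a compactly supported $\mc A$-free \emph{measure} with a nontrivial singular part on the hyperplane $\{x\cdot\xi=0\}$; if a bound $\|v\cdot v_0\|_{\mathscr H^1}\le C\|v\|_{L^1}$ held on $C^\infty_{c,\mc A}(\R^n)$, mollifying and passing to the limit would force $\mc B(\rho u)\cdot v_0$ into $\mathscr H^1\subset L^1$, hence absolutely continuous, a contradiction since $\tilde v_0\cdot v_0\neq 0$. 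You instead produce an explicit sequence from the same wave-cone data: $u_M$ is a $k$-th finite difference of a bump in $x_1$ (whose vanishing moments make the $k$-th primitive compactly supported) times a transverse bump at scale $R=M$, and the Hardy norm is bounded below by pairing $v_0\cdot\mc B u_M$ with the $\BMO$ function $\log|x_1|$; the Leibniz cross-terms have zero transverse mean and so drop out of the pairing, while the alternating binomial identity $\sum_{i\ge1}(-1)^i\binom{k}{i}=-1$ makes the coefficient of $\log M$ nonzero, giving $\|v_0\cdot \mc Bu_M\|_{\mathscr H^1}\gtrsim\log M$ with $\|\mc Bu_M\|_{L^1}=O(1)$. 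Your construction buys a quantitative rate at which the estimate fails, at the cost of more bookkeeping; the paper's argument is softer and shorter. Two minor points: the case split is unnecessary, since the spanning condition already provides $\lambda\in\Lambda_{\mc A}$ with $v_0\cdot\lambda\neq 0$ for every nonzero $v_0$ and the wave-cone construction then handles all cases (in your first case the quantity $v_0\cdot v_j$ is simply not in $\mathscr H^1$, which is fine under the convention that its norm is infinite, but can be avoided altogether); and the constant in the surviving term should be $\int\chi_0\neq 0$ rather than $\|\chi_0\|_{L^1}$ unless you take $\chi_0\ge 0$.
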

\begin{proof}
	
	By the spanning cone condition, there exists non-zero $\tilde v_0\in \mb V$ and $\xi \in \R^n$ such that $\tilde v_0\in\ker\mc A(\xi)=\tp{im\,}\mc B(\xi)$, say $\mc{B}(\xi)u_0=\tilde v_0$, and $\tilde v_0\cdot v_0\neq 0$. Note that if $u(x)=f(x\cdot\xi)u_0$ for some $f\in 
	L^1_{\tp{loc}}(\R)$, then $\mc B u(x)=f^{(k)}(x\cdot\xi) \mc B(\xi)u_0$. In particular, by choosing $f(t)=\max\{t,0\}^{k-1}$, we obtain that $\mc B u=(k-1)!\tilde{v}_0\left(\mathscr H^{n-1}\mres \{x\cdot\xi=0\}\right)$.
	
	By defining $\tilde u=\rho u$ for some test function $\rho$ that equals one in a neighbourhood of the unit ball, we obtain a compactly supported $\mc A$-free measure $\mc B\tilde u$ that is not absolutely continuous.
	
	We now explain how the proof can be concluded easily. Assume for contradiction that the claim of the lemma fails, so that there is a bound
	$$
	\|v\cdot v_0\|_{\mathscr H^1}\leq C\|v\|_{L^1}\quad\text{for }v\in C^\infty_{c,\mc A}(\R^n).
	$$
	Consider a sequence of mollifications $\tilde u_\varepsilon$, so that $\tilde u_\e\in C^\infty_{c}(\R^n)$ and $\mc B \tilde u_\varepsilon\wstar \mc B\tilde u$ as measures. The estimate implies
	$$
	\|\mc B\tilde u_\varepsilon\cdot v_0\|_{\mathscr H^1}\leq C\sup_{\varepsilon\in(0,1)}\|\mc B \tilde u_\varepsilon\|_{L^1}<\infty,
	$$
	and so, up to subsequences, $(\mc B u_\varepsilon\cdot v_0)_\varepsilon$ is convergent in $\mathscr H^1$. 
	It follows that $\mc B u\cdot v_0\in\mathscr H^1$, so $\mc B u\cdot v_0$ is absolutely continuous, which leads to a contradiction since $\tilde v_0\cdot v_0\neq 0$.  
\end{proof}

We are finally ready to finish the proof.

\begin{proof}[Proof of Theorem \ref{teo:hardy}]
	Note that if (\ref{eq:hardyimp}) holds then $F$ is $\mc A$-quasiaffine at zero, i.e.\ we have (\ref{eq:B-qa}) with $z=0$:
	if $u\in C^\infty_c(\R^n, \mb U)$ then $\mc B u \in C^\infty_{c,\mc A}$ and therefore
	$\int_{\R^n} F(\mc B u)=0$
	since functions in the Hardy space have zero mean. Moreover, if $F$ is $\mc A$-quasiaffine at zero then it is quasiaffine everywhere. 
	To see this, fix $z \in \mb V$ and $u\in C^\infty_c(\R^n, \mb
        U)$. Let $\phi\in C^\infty_c(\R^n, \mb U)$ be chosen so that
        $\mc B \phi=z$ in the support of $u$; thus $\int_{\R^n} F(t
        \mc B\phi + \mc B u)=0$ for any $t\in \R$. Then, since $F(t \mc B \phi+\mc B u)=F(t \mc B \phi)$ outside the support of $u$,
	\begin{align*}
	0&=\frac{\d}{\d t} \int_{\R^n} F(t \mc B \phi + \mc B u)- F(t \mc B \phi) \d x  = 
	\frac{\d}{\d t}\int_{\R^n} F(t z + \mc B u)-F(t z) \d x
	\end{align*}
	so the right-hand side is constant. In particular, comparing the values at $t=1$ and $t=0$,
	$$\int_{\R^n} F(z+ \mc B u)-F(z) \d x = \int_{\R^n} F(\mc B u)\dif x=0,$$
	as wished.
	
	Since $F$ is $\mc A$-quasiaffine, it is a polynomial, which we write as a sum of homogeneous terms as $F=\sum_{s=0}^n P_s$. In fact, it is clear that $P_0=0$.  We note that
	$$
	0=\int_{\R^n}F(t\mc Bu)\dif x=\sum_{s=1}^n t^s\int_{\R^n}P_s(\mc Bu)\dif x
	$$
	for all $t\in \R$ and  $u$ fixed. This implies that each $P_s$ is $\mc A$-quasiaffine as well.
	
	Conversely, if $F$ is $\mc A$-quasiaffine then it is continuous and, given $v\in C^\infty_{c,\mc A}(\R^n)$ we have, from Proposition \ref{prop:hodge}, a sequence $u_j \in C^\infty_c(\R^n,\mb U)$ such that $\mc B u_j \to v$ in $L^p(\R^n, \mb V).$
	Therefore
	$$0=\int_{\R^n} F(\mc B u_j)\d x \to \int_{\R^n} F(v) \d x \hs \tp{ as } j\to \infty,$$
	so we can use Proposition~\ref{prop:auxh1} to see that (\ref{eq:hardyimp}) and the required estimate for $s$-homogeneous $F$, $s\geq2$, holds. 
	
	Finally, let $F$ be linear, say $F(v)=v_0\cdot v$. By Lemma~\ref{lema:Afreefieldshardy}, there can be no uniform estimate in this case. Moreover, if $v_0$ is not orthogonal to $\mb I_{\mc A}$, we consider $v_1\in \mb I_{\mc A}$ be such that $v_0\cdot v_1\neq 0$ and a scalar test field $\rho\in C^\infty_{c}(\R^n)$ with non-zero integral. Then $\rho v_1\in C_{c,\mc A}^\infty(\R^n)$ but {$F(\rho v_1)$} is not in the Hardy space.
	On the other hand, if $v_0$ is orthogonal to $\mb I_{\mc A}$, we write $v=v_1+v_2$ for the decomposition of $v\in C_{c,\mc A}^{\infty}(\R^n)$ such that $v_1\in C^\infty_c(\R^n,\mb I_{\mc A})$ and $v_2\in C_{c,\tilde{\mc A}}^{\infty}(\R^n)$ (recall Lemma~\ref{lema:cocan} and its notation). We then have that $F(v)=v_0\cdot v_2$, which is a test function with zero integral, as is $v_2$ by Lemma~\ref{lema:char_cocanc}. It follows that $F(v)$ lies in $\mathscr H^1(\R^n)$. The proof is complete. 
\end{proof}

We remark that Theorem \ref{teo:hardy} seems to contradict \cite[Proposition 6.3]{Lindberg2017}, but unfortunately there appears to be a mistake in the calculation presented there.
As a simple consequence of the theorem, we have:
\begin{cor}
	If $F$ is an $s$-homogeneous $\mc A$-null Lagrangian, $s\geq 2$, then 
	$$F\colon (L^s_\mc A(\R^n),\tp{w})\to (\mathscr H^1(\R^n), \tp{w}^*) \tp{ is sequentially continuous.}$$
\end{cor}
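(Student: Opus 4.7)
The plan is to combine the quantitative Hardy-space bound from Theorem~\ref{teo:hardy} with the distributional weak continuity given by Proposition~\ref{prop:abstractnl}\ref{it:improved}, and then to upgrade the resulting distributional convergence to weak-$*$ convergence in $\mathscr H^1$ via a Banach--Alaoglu argument.

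Suppose $v_j \rightharpoonup v$ in $L^s_\mc A(\R^n)$. By the uniform boundedness principle, $(v_j)$ is bounded in $L^s(\R^n,\mb V)$; since $s\ge 2$ and $F$ is $s$-homogeneous and $\mc A$-quasiaffine, the estimate in Theorem~\ref{teo:hardy} — extended from $C^\infty_{c,\mc A}$ to the whole of $L^s_\mc A$ by the density provided by Proposition~\ref{prop:Lp_A=BWB,p} together with the continuity of $F$ — yields
$$\sup_j \|F(v_j)\|_{\mathscr H^1(\R^n)} \le C\sup_j \|v_j\|_{L^s(\R^n)}^s < \infty.$$
Hence the sequence $(F(v_j))$ is bounded in $\mathscr H^1(\R^n)$.

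Since $\mc A v_j = 0 = \mc A v$, the second hypothesis in Proposition~\ref{prop:abstractnl}\ref{it:improved} is automatic, and applying that proposition on any bounded domain containing the support of a prescribed test function gives $F(v_j) \wstar F(v)$ in $\mathscr D'(\R^n)$. This already identifies what the weak-$*$ limit in $\mathscr H^1$ must be whenever it exists.

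To conclude, I would upgrade distributional convergence to weak-$*$ convergence in $\mathscr H^1(\R^n) = (\tp{VMO}(\R^n))^*$ by a subsequence argument. The space $\tp{VMO}(\R^n)$ is separable (being the closure in $\BMO$ of, e.g., $C^\infty_c(\R^n)$), so bounded sets of $\mathscr H^1$ are sequentially weak-$*$ compact by Banach--Alaoglu. Any weak-$*$ cluster point $g$ of $(F(v_j))$ must satisfy $\int g \varphi = \int F(v)\varphi$ for every $\varphi \in C^\infty_c(\R^n) \subset \tp{VMO}(\R^n)$, hence $g=F(v)$. Uniqueness of the cluster point yields convergence of the full sequence.

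The only potentially delicate point is the compatibility between the two notions of convergence in the last step, which reduces to the elementary observation that smooth compactly supported functions lie in $\tp{VMO}$; the deeper input is the quantitative Hardy bound that secures boundedness in $\mathscr H^1$ in the first place.
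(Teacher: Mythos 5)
Your proof is correct and follows essentially the same route as the paper: distributional convergence of $F(v_j)$ from Proposition~\ref{prop:abstractnl}\ref{it:improved} plus the uniform $\mathscr H^1$ bound (which Proposition~\ref{prop:auxh1} already states for all of $L^s_{\mc A}(\R^n)$, so your density extension is not even needed), upgraded via the duality $\mathscr H^1=(\tp{VMO})^*$. The only cosmetic difference is the last step: the paper passes from $C^\infty_c$ to $\tp{VMO}$ by a direct density-plus-uniform-bound argument, while you use weak-$*$ sequential compactness and uniqueness of cluster points; the two are interchangeable.
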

\begin{proof}
	Given a sequence $v_j \in L^s_\mc A(\R^n)$ such that $v_j \w v$ in $L^s$, we have from \ref{it:improved} of Proposition \ref{prop:abstractnl} that
	$$\int_{\R^n}\varphi F(v_j)  \d x \to \int_{\R^n} \varphi F(v)  \d x \hs \tp{for all } \varphi \in C^\infty_c(\R^n).$$
	Since $F(v_j), F(v)$ are uniformly bounded in $\mathscr H^1(\R^n)$, and by density of test functions in $\tp{VMO}(\R^n)$, we can replace  $C^\infty_c$ by $\tp{VMO}$ above; in this case, the integrals should be thought of as shorthand notation for the duality pairing. 
\end{proof}

The utility of Hardy space bounds when dealing with weakly converging sequences is  apparent, for instance, from Theorem \ref{teo:jonesjourne}.
To conclude this section we provide some concrete examples which illustrate the way in which Theorem \ref{teo:hardy} contains the examples of \cite{Coifman1993}. 

\begin{ex}[Stationary Maxwell system]
	Let $E,B \in C^\infty_c(\R^n,\R^n)$ be such that
	\begin{equation*}
	\tp{div}\, E =0, \hs \tp{curl}\, B = 0.\label{eq:maxwell}
	\end{equation*}
	Then the vector field $(E,B)$ is $\mc A$-free, where of course $\mc
	A=(\tp{div},\tp{curl})$, which is a constant rank operator. The
	quantity $E\cdot B$ is easily seen to be $\mc A$-quasiaffine: indeed,
	writing $B=\D u$ for some smooth $u$, 
	$$\int_{\R^n}E(x)\cdot B(x) \d x=  -\int_{\R^n} u(x)\,\tp{div\,} E(x)  \d x=0.$$
	Therefore, from the theorem,
	$$\Vert E\cdot B\Vert_{\mathscr H^1} \lesssim \Vert (E,B)\Vert_2.$$
	In particular, and arguing by density, we see that the same holds if $B,E\in L^2(\R^n,\R^n)$.
\end{ex}

A generalization of the previous example for quadratic forms was given in \cite{Li1997}, even without assuming that $\mc A$ has constant rank.

\begin{ex}[Double cancellation]
	Let us take vector fields $U,V\in L^2(\R^n,\R^{n\times n})$; again we
	shall first argue formally as the general case can be recovered by density.
	We introduce the constant rank operator  
	$$\mc A
	\begin{bmatrix}
	U \\ V
	\end{bmatrix}
	=
	\begin{bmatrix}
	\D(\tp{tr}\, U)\\ \tp{curl}\, U\\
	\tp{curl}\, V
	\end{bmatrix}.
	$$
	Note that an $\mc A$-free test vector field $(U,V)$ can be written as $U=\D u$ and $V=\D v$, where moreover $\tp{div}\, u=0$ since $\tp{div}\, u=\tp{tr}\, U$ is both constant and zero outside a compact set.
	The function $F(U,V)=\langle U^T,V\rangle =\sum_{i,j} U^{j,i} V^{i,j}$
	is $\mc A$-quasiaffine:
	$$\int_{\R^n} F(U,V)= \int_{\R^n} \sum_{i,j}\p^i u^j \p^j v^i=\int_{\R^n} \tp{div}\, u
	\tp{ div}\, v=0.$$
	Therefore, from the theorem,
	$$\bigg\Vert \int_{\R^n} \sum_{i,j}\p^j u_i \p^i v_j \bigg\Vert_{\mathscr H^1}\lesssim
	\Vert Du\Vert_2 \Vert Dv\Vert_2$$
	whenever $u$ is divergence-free.
\end{ex}

\begin{ex}[Monge-Amp\`ere]
	Let $\mc A$ be an annihilator for $\D^2$. Given
	$U,V\in C^\infty_c(\R^2,\R^2)$, the map
	$$F(U,V)=U_{11} V_{22} + U_{22} V_{11} - 2 U_{12} V_{12}$$
	is $\mc A$-quasiaffine: writing $U=D^2 u, V=D^2 v$, we have
	$$\int_{\R^n} F(U,V) = \int_{\R^n} \p_{xx} u \p_{yy} v + \p_{yy} u
	\p_{xx} v - 2 \p_{xy} u \p_{xy} v\equiv \int_{\R^n} [u,v]=0$$
	by integration by parts. Thus
	$$\Vert [u,v]\Vert_{\mathscr H^1}\lesssim \Vert D^2 u \Vert_2 \Vert D^2 v \Vert_2.$$
\end{ex}

\section{Continuity estimates for null Lagrangians}
\label{sec:estimates}

In the case where $\Omega=\R^n$ it is possible to give a simple proof of weak continuity of null Lagrangians following the strategy from \cite{Brezis2011b,Iwaniec2001}. This proof circumvents the use of Theorem \ref{teo:lsc} and moreover has the advantage of giving a quantitative statement.

\begin{prop}\label{prop:quant_est}
	Let $F$ be $s$-homogeneous for some $s\geq 2$ and $\mc A$-quasiaffine and let $p\in (s-1,\infty)$, $q\in (1,\infty)$ be such that  
	$\frac{s-1}{p} + \frac 1 q=1$. Given $v_1, v_2 \in C^\infty_{c, \mc A}(\R^n)$, we have the estimates
	\begin{equation}
	\label{eq:1stest}
	\left|\int_{\R^n} \varphi\left(F(v_1)-F(v_2)\right)\d x\right|\leq C \Vert v_1 - v_2\Vert_{\dot W^{-1,q}} \left(\Vert v_1\Vert_{L^p} + \Vert v_2 \Vert_{L^p} \right)^{s-1} \Vert \D \varphi \Vert_{L^\infty}
	\end{equation}
	and
	\begin{equation}
	\label{eq:2ndest}
	\left|\int_{\R^n} \varphi\left(F(v_1)-F(v_2)\right)\d x\right|\leq C \Vert v_1 - v_2\Vert_{\dot W^{-1,\BMO}} \left(\Vert v_1\Vert_{L^p} + \Vert v_2 \Vert_{L^p} \right)^{s-1} \Vert \D \varphi \Vert_{L^q}.
	\end{equation}
	for all $\varphi \in C^\infty_c(\R^n)$.
\end{prop}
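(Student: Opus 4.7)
The plan is to exploit the divergence structure of minors, following the strategy from \cite{Brezis2011b,Iwaniec2001}.

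By Lemma~\ref{lema:null_lags} and Proposition~\ref{prop:explicitnulllags}, after the identification $\mb V\hookrightarrow \odot^k(\R^n,\mb U)$ via $T$, the map $F\circ T$ is a linear combination of $s\times s$ minors of the $N\times n$ matrix $\D U$, where $N=\dim\odot^{k-1}(\R^n,\mb U)$. Writing $v_i=\mc B u_i$ with $u_i\in \dot W^{k,p}(\R^n,\mb U)$ via Proposition~\ref{prop:hodge} and setting $U_i=\D^{k-1}u_i$, we have $F(v_i)=\sum_M c_M M(\D U_i)$, so by linearity it suffices to prove each estimate when $F\circ T=M$ is a single minor. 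The explicit formula $u_i=\mc B^*\Box^{-1}v_i$ from the proof of Proposition~\ref{prop:hodge} exhibits $U_i=\D^{k-1}u_i$ as the image of $v_i$ under a Fourier multiplier of order $-1$; the H\"ormander--Mihlin theorem and the boundedness of Calder\'on--Zygmund operators on $\BMO$ then yield
\begin{equation*}
\|U_1-U_2\|_{L^q}\lesssim \|v_1-v_2\|_{\dot W^{-1,q}},\qquad \|U_1-U_2\|_{\BMO}\lesssim \|v_1-v_2\|_{\dot W^{-1,\BMO}}.
\end{equation*}

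The core identity is the homotopy $U_t\equiv U_2+t(U_1-U_2)$,
\begin{equation*}
M(\D U_1)-M(\D U_2)=\int_0^1 DM(\D U_t):\D(U_1-U_2)\,\d t.
\end{equation*}
Since $DM(\D U_t)$ is a cofactor-type matrix, it is row-wise divergence-free, so the integrand equals $\partial_i\big\{[DM(\D U_t)]_{ij}(U_1-U_2)^j\big\}$. Integrating against $\varphi$ and using Fubini gives
\begin{equation*}
\int_{\R^n} \varphi\{M(\D U_1)-M(\D U_2)\}\,\d x=-\int_0^1\!\!\int_{\R^n} (\partial_i\varphi)\,[DM(\D U_t)]_{ij}\,(U_1-U_2)^j\,\d x\,\d t.
\end{equation*}
For \eqref{eq:1stest}, I apply H\"older with exponents $(\infty,\,p/(s-1),\,q)$, which is valid because $(s-1)/p+1/q=1$, together with $|DM(\D U_t)|\lesssim |\D U_t|^{s-1}\lesssim (|v_1|+|v_2|)^{s-1}$; the first identification above then completes the estimate.

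For \eqref{eq:2ndest}, I instead use that for each fixed $j$ the fields $(\partial_i\varphi)_i$ (curl-free in $i$) and $([DM(\D U_t)]_{ij})_i$ (divergence-free in $i$, by the cofactor property) fit the hypotheses of the Coifman--Lions--Meyer--Semmes div-curl lemma, yielding
\begin{equation*}
\bigg\|\sum_i(\partial_i\varphi)[DM(\D U_t)]_{ij}\bigg\|_{\mathscr H^1(\R^n)}\lesssim \|\D\varphi\|_{L^q(\R^n)}\big\|[DM(\D U_t)]_{\cdot,j}\big\|_{L^{q'}(\R^n)},\quad q'=\tfrac{p}{s-1}.
\end{equation*}
Fefferman--Stein duality $(\mathscr H^1)^*=\BMO$, combined with the second identification above, then gives \eqref{eq:2ndest} after summing in $j$ and integrating in $t$. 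The main obstacle will be making the two negative-norm identifications rigorous, in particular at the $\BMO$ endpoint where Hörmander--Mihlin does not directly apply, and justifying the formal integration by parts: the boundary terms at infinity vanish because $v_i$ is compactly supported and, after reducing to the cocanceling case via Lemma~\ref{lema:cocan}, of zero mean, so that $u_i$ decays at infinity.
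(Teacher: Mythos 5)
Your proposal is correct and follows essentially the same route as the paper: write $v_i=\mc B u_i$, reduce to $\mc B=\D^k$ and to a single minor via Lemma \ref{lema:null_lags} and the H\"ormander--Mihlin/Calder\'on--Zygmund identifications of the negative norms, exploit the row-wise divergence structure of the cofactor to integrate by parts, and conclude by H\"older for \eqref{eq:1stest} and by a div-curl/$\mathscr H^1$--$\BMO$ duality argument for \eqref{eq:2ndest}. The only differences are cosmetic: the paper uses the discrete telescoping identity \eqref{eq:algebraicidentity} with mixed cofactors $X^{(j)}$ where you use the homotopy formula plus the Piola identity, and your pointwise claim $|\D U_t|\lesssim |v_1|+|v_2|$ is not literally true and should be replaced by the norm bound $\Vert \D^k u_i\Vert_{L^p}\lesssim \Vert v_i\Vert_{L^p}$, which follows from the same multiplier argument you already invoke (c.f.\ Theorem \ref{teo:Lpest}).
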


We remark that (\ref{eq:1stest}) estimates the Kantorovich--Rubinstein--Wasserstein norm of the difference $F(v_1)-F(v_2)$. If we take $p=q=s$,  we recover a quantitative version of the statement
$$v_j \w v  \tp{ in } L^s_\mc A(\R^n)\hs \implies \hs F(v_j)\wstar F(v) \tp{ in } \mathscr D'(\R^n).$$
For the second estimate (\ref{eq:2ndest}), we define the norm in  $\dot W^{-1, \BMO}(\R^n)$ by
$$\Vert v\Vert_{\dot W^{-1,\BMO}}\equiv \left \Vert\mc F^{-1}\left(\frac{\widehat{v}(\xi)}{|\xi|}\right) \right \Vert_{\BMO}. $$
Furthermore, Proposition \ref{prop:quant_est} yields continuity
results in the regime below integrability:

\begin{remark}
In this remark we discuss the case $p<s$, so that the quantity $F(v)$
is not integrable, and we define an appropriate distributional version
of $F$.

Given a sequence $v_j\in C_{c,\mc A}^\infty(\R^n)$ such that
$\sup_{j}\|v_j\|_{L^p}<\infty$ and $v_j\rightarrow v \in
\dot{W}{^{-1,q}}$, since $F(v_j)\in C_c^\infty(\R^n)$, we can define 
\begin{equation}
F(v)\equiv \tp{w*-}\lim_{j\rightarrow\infty} F(v_j) \text{ in }
\mathscr{D}^\prime(\R^n);\label{eq:distributional}
\end{equation}
that this is well defined follows from estimate~\eqref{eq:1stest}.
A particularly relevant instance is the case when $v_j\in C_{c,\mc A}^\infty(\Omega)$ and $p>\frac{ns}{n+1}$, where $\Omega\subset\R^n$ is bounded and open. In this situation,
$$
v_j\rightharpoonup v\text{ in }L^p(\Omega) \hs \implies \hs \sup_{j}\|v_j\|_{L^p}<\infty\text{ and }v_j\rightarrow v \text{ in } \dot{W}{^{-1,q}}(\Omega),
$$
where the second convergence follows from the compactness of Sobolev
embeddings. In particular, (\ref{eq:1stest}) can be reinterpreted as a
weak continuity statement for the distributional version of $F$
defined in (\ref{eq:distributional}). Further properties of distributional null
Lagrangians will be explored
elsewhere \cite{GuerraRaitaSchrecker2020}.
\end{remark}

\begin{proof}[Proof of Proposition~\ref{prop:quant_est}]
	We use the strategy of the proof of Proposition \ref{prop:auxh1}, relying on the explicit structure of null Lagrangians. 
	We can write, by Proposition \ref{prop:hodge}, $v_i=\mc B u_i$. Let us first note that it suffices to prove (\ref{eq:1stest}) when $\mc B = \D^k$; indeed, assuming this has been done, and using Lemma \ref{lema:null_lags}, we have
	\begin{align*}
	\left|\int_{\R^n} \varphi\left(F(v_1)-F(v_2)\right)\d x\right|
	& = 
	\left|\int_{\R^n} \varphi\left(F\circ T(\D^k u_1)-F\circ T(\D^k u_2)\right)\d x\right|\\
	& \leq C \Vert \D^{k-1} (u_1-u_2)\Vert_{L^q} \left(\Vert \D^k u_1 \Vert_{L^p} + \Vert \D^k u_2 \Vert_{L^p} \right)^{s-1} \Vert \D \varphi \Vert_{L^\infty}\\
	& \leq C \Vert \mc B u_1- \mc B u_2\Vert_{\dot W^{-1,q}} \left(\Vert \mc B u_1\Vert_{L^p} + \Vert \mc B u_1 \Vert_{L^p} \right)^{s-1} \Vert \D \varphi \Vert_{L^\infty}
	\end{align*}
	which is precisely (\ref{eq:1stest}). In the last inequality we have used the fact that 
	$$ \Vert D^{k-1} u \Vert_{L^q} \leq C\left \Vert \mc F^{-1}\left(\frac{\widehat{\mc B u}(\xi)}{|\xi|}\right)\right \Vert_{L^q}\equiv \Vert \mc B u \Vert_{\dot W^{-1,q}} $$
	which follows from the identity
	$$\mc F\left(D^{k-1} u \right)(\xi) = \widehat u (\xi) \otimes \xi^{\otimes(k-1)} = \mc B^\dagger(\xi) \widehat{ \mc B u}(\xi) \otimes \xi^{\otimes (k-1)}= \mc B^\dagger\left(\frac{\xi}{|\xi|}\right)
	\frac{\widehat{\mc B u}(\xi)}{|\xi|}\otimes \left(\frac{\xi}{|\xi|}\right)^{\otimes (k-1)}$$
	together with the H\"ormander--Mihlin multiplier theorem. A similar argument shows that it also suffices to prove (\ref{eq:2ndest}) for $\mc B=\D^k$, by boundedness of Calderón-Zygmund operators from $\BMO$ to $\BMO$, see e.g. \cite[IV, \S 6.3a]{Stein2016}.
	
	Let us thus assume that $\mc B = \D^k$ and let us write $U_i \equiv \D^{k-1} u_i$. It suffices to consider the case where $F(\D^k u)=\det \D_{x^\prime}U^\prime (x)$, where we use the notation of the proof of Proposition \ref{prop:auxh1}.
	Note the algebraic identity
	\begin{equation}
	\label{eq:algebraicidentity}
	\det \D_{x'} U'_1-\det \D_{x'} U'_2 
	= \sum_{i,j=1}^s \frac{\p}{\p x'_i} \left[X^{(j)}_{ij}
	\left((U'_1)^j - (U'_2)^j\right)\right]\end{equation}
	where $X^{(j)}$ is the matrix 
	$$X^{(j)}\equiv \tp{cof}(\D_{x'} (U_2')^1, \dots, \D_{x'} (U_2')^{j-1}, \D_{x'} (U'_1-U'_2)^j, \D_{x'} (U'_1)^{j+1}, \dots, \D_{x'} (U'_1)^{s}).$$
	Then, integrating by parts and using H\"older's inequality, we get
	$$\left|\int_{\R^n} \varphi \left[F(\D U_1) - F(\D U_2)\right]\d x\right|\leq \sum_{j=1}^s \Vert (U'_1-U'_2)^j\Vert_{L^q} 
	\Vert \D U'_1\Vert_{L^p}^{s-j}\Vert \D U'_2\Vert_{L^p}^{j-1}
	\Vert \D \varphi\Vert_{L^\infty}$$
	from which the desired inequality
	$$\left|\int_{\R^n} \varphi \left[F(\D U_1) - F(\D U_2)\right]\d x\right|
	\leq \Vert U_1 - U_2 \Vert_{L^q} \left(\Vert \D U_1 \Vert_{L^p}+\Vert\D U_2 \Vert_{L^p}\right)^s \Vert \D \varphi \Vert_{L^\infty}
	$$
	follows.

	In order to prove (\ref{eq:2ndest}) for $\mc B=\D^k$ we use the Hardy space integrability of Proposition \ref{prop:auxh1}. Starting from (\ref{eq:algebraicidentity}), we do an integration by parts to get
	\begin{align*}
	\left|\int_{\R^n} \varphi \left(F(\D U_1) - F(\D U_2)\right)\d x \right|&
	\leq \sum_{j=1}^s \Vert (U'_1-U'_2)^j\Vert_\BMO \bigg\Vert \sum_{i=1}^s X_{ij}^{(j)} \frac{\p}{\p x'_i} \varphi \bigg\Vert_{\mathscr H^1}.
	\end{align*}
	Noting the estimate $$|X^{(j)}_{ij}|\leq |\D_{x'} (U_2')^1|\dots| \D_{x'} (U_2')^{j-1}| |\D_{x'} (U_1')^{j+1}|\dots| \D_{x'} (U_2')^{s}|,$$ we find, from the Hardy estimate and H\"older's inequality,
	$$\left|\int_{\R^n} \varphi \left[F(\D U_1) - F(\D U_2)\right]\d x\right|\leq \sum_{j=1}^s \Vert (U'_1-U'_2)^j\Vert_{\BMO} 
	\Vert \D U'_1\Vert_{L^p}^{s-j}\Vert \D U'_2\Vert_{L^p}^{j-1}
	\Vert \D \varphi\Vert_{L^q}$$
	from where one readily deduces (\ref{eq:2ndest}). 
\end{proof}

\appendix
\section{Computations for Proposition \ref{prop:nonunique}}

We shall only sketch the proof of the proposition, since it is purely
computational. The calculations are very involved and should be
performed with the help of symbolic computation software.
Let $n=3, \mb U = \R^7, \mb V=\R^7, \mb W=\R^3$ and consider the operator defined by
$$\mc A(\xi)=\begin{bmatrix}
\xi_1  & \xi_2 & \xi_3 & 0 & 0 & 0 & 0\\
0& 0&  0 &  \xi_1 & \xi_2 &\xi_3  & 0 \\
0 & \xi_1 & 0 & \xi_2 & 0 & 0 & \xi_3
\end{bmatrix}.$$
It is easy to see that $\tp{rank}\,\mc A(\xi)=3$ for all $\xi \neq 0$.

Our general strategy is as follows. Fix an order $k$ for $\mc B$ and consider a generic operator of that order: in other words, let
$\mc B(\xi)=\sum_{|\a|=k}\xi^\a B_\a $, where $B_\a=(b_\a^{i,j})$ are generic matrices with coefficients to be determined. One must have $\mc A(\xi)\mc B(\xi)=0$; this is a matrix whose entries are polynomials in $\xi$ and therefore is zero if and only if all coefficients of all the polynomials are zero. In other words, the condition $\mc A(\xi)\mc B(\xi)=0$ imposes a linear system on the variables $b^{i,j}_\a$. 

As a first step, one needs to verify that $\mc A$ does not admit potentials with first or second order.
For instance, when we look for potentials with order two, we can solve the system  
$$\mc A(\xi_1,\xi_2,0)\mc B(\xi_1,\xi_2,0)=0$$
to find that we must have $b^{i,j}_\a=0$ when $i=1,2,4,5$.
This shows that $\tp{rank\,}\mc B(\xi_1,\xi_2,0)\leq 3$, which cannot be if we are to have (\ref{eq:exact}).

However, $\mc A$ does have multiple cocanceling potentials of order three; they are quite complicated and the reader can find the expressions of two of them, $\mc B_1$ and $\mc B_2$, below. In order to verify that they are cocanceling, one can check for instance that, with $e_i$ being the canonical basis in $\R^3$,
$$\bigcap_{i=1}^3\ker \mc B_j(e_i)=\{0\} \hs \tp{ for } j=1,2.$$
One can also verify that there is no isomorphism $Q\in \tp{GL}(\mb U)$ such that $\mc B_1(\xi) Q = \mc B_2(\xi)$; this can be achieved by testing with $\xi=e_i$ for $i=1,2,3$ as above.

We denote by $(\mc B_i)_{\bullet j}$ the $j$-th column of $\mc
B_i$. Then we have, for $i=1$,

{\tiny
	\begin{gather*}
	(\mc B_1)_{\bullet 1}
	= \begin{bmatrix}
	-p_5-p_7 \\ p_4-p_{10} \\ p_2+p_9 \\ -p_2-p_3-p_5-p_6-2 p_8-p_{10} \\ p_1+p_5-p_8-p_{10} \\ p_1+p_2+p_3+p_4+p_6+p_7+p_9 \\ p_2+p_4+p_5+p_6+2 p_7+p_9 \\
	\end{bmatrix}, \hs
	(\mc B_1)_{\bullet 2}
	= \begin{bmatrix}
	-p_3-p_5-p_7-p_8-p_{10} \\ p_4-p_6+p_8-p_{10} \\ p_1+p_2+p_4+p_5+p_6-p_7+p_9 \\ -p_2-p_5-p_6-p_8-p_9-p_{10} \\ p_1+p_5-p_8-p_9 \\ p_2+p_3+p_5+p_6+p_7+p_8 \\ p_3+p_5+p_6+p_7+p_8+p_9 \\
	\end{bmatrix}
	\\
	(\mc B_1)_{\bullet 3}
	= \begin{bmatrix}
	p_5+p_8 \\ -p_3-p_5+p_8 \\ -p_7 \\ -p_5-p_8-2 p_9-p_{10} \\ p_3+p_6-p_8-p_9-p_{10} \\ p_4+p_5+p_6+p_7+p_8+p_9 \\ p_1+p_2+p_7+2 p_8+p_9 \\
	\end{bmatrix}
	\hs
	(\mc B_1)_{\bullet 4}
	= \begin{bmatrix}
	0 \\ -p_3+p_8+p_9 \\ p_2-p_7-p_8 \\ -p_3-p_5-p_6-2 p_8 \\ p_5-p_9-p_{10} \\ p_1+p_2+p_3+p_4+p_8+p_9 \\ p_1+p_2+2 p_7 \\
	\end{bmatrix}
	\\
	(\mc B_1)_{\bullet 5}
	= \begin{bmatrix}
	p_5-p_6-p_9 \\ -p_3+p_8-p_9 \\ p_3+p_5-p_7+p_8 \\ -p_3-p_5-p_{10} \\ p_3-p_9-p_{10} \\ p_1+p_6+p_8+p_9 \\ p_1+p_2+p_5+p_9 \\
	\end{bmatrix}
	\hs 
	(\mc B_1)_{\bullet 6}
	= \begin{bmatrix}
	p_8-p_3 \\ -p_3-p_5+p_8 \\ p_1+p_2-p_7 \\ -p_5-p_8-p_9 \\ 0 \\ p_2+p_4+p_5 \\ p_1+p_2+p_7+p_8 \\
	\end{bmatrix}
	\hs
	(\mc B_1)_{\bullet 7}
	= \begin{bmatrix}
	-p_3-p_7-2 p_8+p_9 \\ -p_3+p_4+p_5-p_6+p_9-p_{10} \\ p_1+p_2+p_4-p_8+p_9 \\ -p_2-p_3-p_5-p_6-2 \left(p_8+p_9\right)-p_{10} \\ p_1+p_3+p_5+p_6 \\ p_1+p_3+p_4+p_5+p_6 \\ p_1+p_3+p_4+p_6+2 \left(p_7+p_8\right)+p_9 \\ 
	\end{bmatrix}
	\end{gather*}
}

For $i=2$, we have

{\tiny
	\begin{gather*}
	(\mc B_2)_{\bullet 1}
	= \begin{bmatrix}
	-p_6-p_7-p_8+p_9-p_{10} \\ p_4-p_6-p_{10} \\ p_3+p_4+p_6+p_9 \\ -p_2-p_3-p_5-p_6-p_8-p_9 \\ p_1+p_3+p_5+p_6-p_8-p_{10} \\ p_1+p_3+p_7+p_9 \\ p_2+p_3+p_4+p_5+p_6+p_7+p_8
	\end{bmatrix}, \hs
	(\mc B_2)_{\bullet 2}
	= \begin{bmatrix}
	-p_3-p_5-p_6-p_7-p_8 \\ p_4-p_6-p_9-p_{10} \\ p_1+p_2+p_3+p_4+p_5+p_8+p_9 \\ -p_2-p_3-p_5 \\ p_1 \\ p_1+p_2 \\ p_2+p_3+p_4+p_5+p_6
	\end{bmatrix}
	\\
	(\mc B_2)_{\bullet 3}
	= \begin{bmatrix}
	-p_3-p_5-p_6+p_8 \\ -p_5-p_6-p_{10} \\ p_1+p_2+p_3+p_5+p_9 \\ -p_5-p_8-p_{10} \\ p_3+p_5-p_9-p_{10} \\ p_6+p_8+p_9 \\ p_2+p_3+p_4+p_6+p_7+p_9
	\end{bmatrix}
	\hs
	(\mc B_2)_{\bullet 4}
	= \begin{bmatrix}
	-p_3-p_6-p_7-p_{10} \\ p_4-p_6+2 p_8+p_9-p_{10} \\ p_1+p_3+p_5+p_6-2 p_7-p_8+p_9 \\ -p_2-2 p_5-p_6-p_8-2 p_9 \\ p_1+p_3+p_6-p_8-p_9-p_{10} \\ p_2+p_3+p_4+p_5+p_7+p_8+p_9 \\ p_3+p_6+p_7+2 p_8\end{bmatrix}
	\\
	(\mc B_2)_{\bullet 5}
	= \begin{bmatrix}
	p_5-p_7-p_8-p_{10} \\ -p_3+p_4-p_6+p_8+p_9 \\ p_4+p_5+p_6-p_7-p_8 \\ -p_2-p_5-p_6-2 p_9 \\ p_1+p_3+p_6-p_9 \\ p_3+p_5+p_8 \\ p_1+p_3+2 p_8
	\end{bmatrix}
	\hs 
	(\mc B_2)_{\bullet 6}
	= \begin{bmatrix}
	-p_7-p_8-p_{10} \\ -p_3+p_4-p_6+p_8-p_{10} \\ p_2+p_4+p_5+p_6-p_7+p_9 \\ -p_2-p_5-p_8-p_9 \\ p_1+p_3+p_5-p_8-p_{10} \\ p_5+p_7+p_9 \\ p_1+p_3+p_6+p_7+p_8 \end{bmatrix}
	\hs
	(\mc B_2)_{\bullet 7}
	= \begin{bmatrix}
	-p_6-p_7-p_8 \\ -p_3+p_4+p_8-p_9 \\ p_2+p_3+p_4-p_7+p_8 \\ -p_2-p_3-p_5-2 p_8-p_9 \\ p_1+p_5+p_6-p_8-p_{10} \\ p_1+p_2+p_4+p_7+p_9 \\ p_1+p_2+p_5+2 p_7+p_8 
	\end{bmatrix}
	\end{gather*}
}
where we made for simplicity the substitutions
$$\begin{matrix*}[r]
\xi _1^3= p_1, &\xi _1^2 \xi _2= p_2, & \xi _1^2 \xi _3= p_3, & \xi _1  \xi _2^2 = p_4, & 
\xi _1 \xi _2 \xi _3 = p_5 \\   \xi _1 \xi _3^2= p_6, & \xi _2^3 = p_7, &  \xi  _2^2 \xi_3 =p_8,
& \xi_2\xi_3^2=p_9,&\xi_3^3= p_{10}.
\end{matrix*}$$

{\footnotesize
\bibliographystyle{acm}

\bibliography{/Users/antonialopes/Dropbox/Oxford/Bibtex/library.bib}

}

\end{document}